\documentclass[11pt,reqno]{amsart}
\usepackage[width=150mm,height=220mm,
centering,
]{geometry}
\usepackage{mathrsfs}
\usepackage{amsxtra}
\usepackage{amsmath,amsfonts,amssymb}
\usepackage{upgreek}
\usepackage{xcolor}
\usepackage[numbers,sort&compress]{natbib}
\usepackage{csquotes}
\usepackage{url}
\numberwithin{equation}{section}
\allowdisplaybreaks
\ifcsname proof\endcsname
  \else

\fi

\newtheorem{theorem}{Theorem}[section]
\newtheorem{proposition}[theorem]{Proposition}
\newtheorem{lemma}[theorem]{Lemma}

\newtheorem{definition}[theorem]{Definition}

\newtheorem{corollary}[theorem]{Corollary}

\def\rmd{\mathrm d}
\def\supp{\operatorname{supp}}
\begin{document}

\parindent 2em
\title[Local Smoothing Estimates of Schr\"{o}dinger Operators]
{Local Smoothing Estimates of Schr\"{o}dinger Operators on Waveguides Using Refined Decoupling}

\author[W. Sun]{Wenchang Sun}
\address{
School of Mathematical Sciences and LPMC\\
Nankai University\\
Tianjin\\
China}

\email{sunwch@nankai.edu.cn}

\author[J. Tan]{Jiheng Tan}
\address{
School of Mathematical Sciences and LPMC\\
Nankai University\\
Tianjin\\
China}

\email{tanjh@mail.nankai.edu.cn}

\thanks{Corresponding author: Wenchang Sun.}

\subjclass[2020]{35B65, 35Q41, 42B37.}
\keywords{Schr\"{o}dinger operator, Local Smoothing Estimates, Waveguide, Decoupling}

\begin{abstract}
We establish improved local smoothing estimates for the Schr\"odinger equation on waveguides.
Our results extend the previously known range of exponents in every dimension and are sharp in certain cases.
The proof develops a transference argument that preserves the discrete periodic frequency structure,
together with a refined decoupling estimate adapted to waveguides.
We also derive consequences for Euclidean local smoothing and almost everywhere convergence.
\end{abstract}

\thanks{This work was supported by the
National Natural Science Foundation of China (No. 12571104 and 12271267).}

\maketitle
\date{\today}

\section{Introduction}\label{sec:introduction}
Waveguide spaces $\mathbb T^m\times\mathbb R^n$ combine periodic and Euclidean directions and arise naturally in the study of 
dispersive equations, particularly Schr\"odinger equations. Well-posedness and scattering for nonlinear Schr\"odinger equations 
on these spaces have been studied in \cite{IP,HP,TV,CGYZ,CGZ,YYZ21}, while modified scattering and long-time dynamics have been 
investigated in \cite{HPTV,ZZ}. Global-in-time Strichartz estimates were established in \cite{B,BCP}, 
and bilinear Strichartz estimates were obtained in \cite{DFYYZ}.

Let $m,n\geq1$ and $\mathbb T^m=\mathbb R^m/\mathbb Z^m$. Denote by $\Delta_{\mathrm{LB}}$ the Laplace--Beltrami operator
on $\mathbb T^m\times\mathbb R^n$, and write $\Delta_{\mathbb T^m\times\mathbb R^n}=\Delta_{\mathrm{LB}}/(2\pi)$.
We consider the Schr\"odinger equation
\begin{equation*}
\begin{cases}
2\pi i\partial_tu(x,y,t)-\Delta_{\mathrm{LB}}u(x,y,t)=0,\\
u(x,y,0)=f(x,y).
\end{cases}
\end{equation*}
Its solution is given by
\begin{equation*}
\begin{aligned}
u(x,y,t)
&=e^{-it\Delta_{\mathbb T^m\times\mathbb R^n}}f(x,y)\\
&=\sum_{k\in\mathbb Z^m}\int_{\mathbb R^n}
\widehat f(k,\xi)
e\left(k\cdot x+\xi\cdot y+t(|k|^2+|\xi|^2)\right)
\rmd\xi.
\end{aligned}
\end{equation*}
We are interested in the local smoothing estimate
\begin{equation}\label{eq:conj}
\|e^{-it\Delta_{\mathbb T^m\times\mathbb R^n}}f\|_{L^p(\mathbb T^m\times\mathbb R^n\times[0,1])}
\lesssim
\|f\|_{L_s^p(\mathbb T^m\times\mathbb R^n)},
\end{equation}
where $L_s^p$ denotes the Bessel potential space.

Chen, Guo, Shen and Yan \cite{CGSY} proved that \eqref{eq:conj} holds for
\begin{equation*}
p\geq2+\frac4{m+n},
\qquad
s>(m+2n)\left(\frac12-\frac1p\right)-\frac2p.
\end{equation*}
They also showed that \eqref{eq:conj} fails for
\begin{equation*}
0\leq s<(m+2n)\left(\frac12-\frac1p\right)-\frac2p.
\end{equation*}
The lower bound on $s$ is positive precisely when
$p>2+4/(m+2n)$.
Our aim is to improve the range of $p$ while retaining the sharp Sobolev regularity.

\begin{theorem}\label{thm:main}
Let $m,n\geq1$ and
\begin{equation*}
p_0=
\begin{cases}
\displaystyle 2+\frac4{m+2},&n=1,\\[4pt]
\displaystyle 2+\frac8{2m+7},&n=2,\\[4pt]
\displaystyle 2+\frac{4(7n+36)}
{(7n+36)m+11n^2+54n},&n\geq3.
\end{cases}
\end{equation*}
For every $p_0<p<\infty$ and
\begin{equation*}
s>(m+2n)\left(\frac12-\frac1p\right)-\frac2p,
\end{equation*}
estimate \eqref{eq:conj} holds.
\end{theorem}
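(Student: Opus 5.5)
We first reduce to frequency-localized data. By Littlewood--Paley decomposition in $(k,\xi)$ and the triangle inequality (with the $\epsilon$ loss absorbed by the strict inequality on $s$), it suffices to show that for $\widehat f$ supported in $\{|k|^2+|\xi|^2\sim R^2\}$,
\[
\|e^{-it\Delta_{\mathbb T^m\times\mathbb R^n}}f\|_{L^p(\mathbb T^m\times\mathbb R^n\times[0,1])}\lesssim_\epsilon R^{(m+2n)(\frac12-\frac1p)-\frac2p+\epsilon}\|f\|_{L^p}.
\]
Parabolic rescaling $(x,y,t)\mapsto(Rx,Ry,R^2t)$ turns this into an estimate for functions on $(R\mathbb T)^m\times\mathbb R^n\times[0,R^2]$. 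The frequencies lie on the truncated paraboloid over $B(0,1)\subset\mathbb R^{m+n}$; in the first $m$ coordinates they form the lattice $R^{-1}\mathbb Z^m$, while the last $n$ coordinates remain continuous.

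The next step is a transference argument that keeps the periodic structure in $x$. We would extend in $x$ periodically over a box of side $R$ (equivalently, $R^2$ after time scaling) and replace the $\mathbb R^n$ directions by a bounded window using a smooth cutoff and locally constant heuristics. The goal is to express the problem as an $L^p$ estimate on a spacetime box of size $R^2$ for exponential sums/integrals whose frequencies are $R^{-1}$-separated in the $m$ torus directions. We then decompose into caps $\theta$ of size $R^{-1}$, so that each torus cap contains a single lattice point in the first $m$ coordinates, and apply a \emph{refined} decoupling inequality of Guth--Iosevich--Ou--Wang type, adapted to waveguides. In this inequality the constant depends on how many caps are actually active and on wave packet multiplicity, rather than on the worst case $R^{(m+n)/2}$. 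The two extremes behave differently: for the pure torus we use $\ell^2$ decoupling, which gives sharp periodic Strichartz estimates, and for the Euclidean part the refined version exploits the fact that each $\theta$-piece has only $L^2$-orthogonal wave packets in $y$.

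Summing over caps, we estimate each cap piece by its $L^2$ norm via Plancherel on $(R\mathbb T)^m\times\mathbb R^n$ and by its $L^\infty$ norm via Bernstein. Interpolating between these through the refined decoupling constant yields the critical $R$-power, provided $p$ exceeds an explicit threshold. The threshold should come out as $2+4/(m+2)$ for $n=1$. For larger $n$ the Euclidean part should be handled by invoking the best known Euclidean local smoothing/Strichartz-type input in dimension $n$ (the $n=2$ and $n\ge3$ formulas look like they come from Du--Zhang or polynomial partitioning bounds of the form $2+\frac{4(7n+36)}{\cdots}$), combined with the torus $\ell^2$ decoupling in $m$ variables through a Fubini/iteration argument.

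The main obstacle will be the transference step, namely making the periodic structure survive rescaling while the $\mathbb R^n$ directions require localization. This localization creates Schwartz tails and mixes lattice frequencies if handled carelessly. We would first try a smooth time-frequency cutoff whose Fourier support is smaller than the $R^{-1}$ lattice spacing, so that distinct torus modes remain separated. The second difficulty is the numerology: the exponent bookkeeping needed to reproduce $p_0$ exactly in each case $n=1,2,\ge3$.
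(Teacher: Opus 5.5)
Your opening reductions are sound. Littlewood--Paley with parabolic rescaling and a transference in which a spatial cutoff at a scale much larger than the period turns the lattice frequencies into separated bumps both match the paper; the latter is essentially its $F_\tau^\vee=\rho^\vee(\tau x)f$ with $\tau\ll 1/R$. Guth--Iosevich--Ou--Wang refined decoupling is also the right tool. The gap is that \emph{the step that is supposed to produce the threshold is empty}.

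Refined decoupling bounds $\|g\|_{L^{p}(Y)}$ by $(M/W)^{1/2-1/p}(\sum_T\|g_T\|_{L^p}^2)^{1/2}$, where $M$ is the number of wave-packet tubes meeting a typical $r^{1/2}$-cube of $Y$. With no nontrivial bound on $M$ it is no better than $\ell^2$ decoupling, which gives only the known range $p\ge 2+4/(m+n)$ of Chen--Guo--Shen--Yan. Nothing in your ``interpolate $L^2$ and $L^\infty$ through the refined decoupling constant'' step controls $M$, so no threshold below $2+4/(m+n)$ can come out of it.

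The missing idea is an incidence-geometric bound on $M$ in the $n+1$ Euclidean space-time variables only; the periodic directions enter solely through $L^2$ orthogonality in $k$. In the paper this takes three ingredients.
\begin{itemize}
\item Restricted weak-type interpolation reduces to the mixed bound with $\|f\|_{L^2}^{2/p_0}\|f\|_{L^\infty}^{1-2/p_0}$. This ties amplitude to the number of parallel packets via $\bar\beta^2P_r\lesssim (T/r)^nM_{T,r}(f)^2$, with $M_{T,r}(f)\lesssim\|f\|_{L^\infty}$.
\item A two-ends reduction is made, and the non-two-ends case is handled by induction on scales at radius $r^{1-\varepsilon^2}$.
\item After the change of variables $\Phi_T$, the two-ends Furstenberg inequalities of Wang--Wu (with the Katz--Tao estimate for $n\ge3$) give $M\lesssim r^{C\varepsilon^2}P_r\,r^{n/2}(\lambda\beta r)^{-\gamma_n}$, where $\gamma_n=\frac12,\frac34,\frac{n(2n+9)}{7n+36}$.
\end{itemize}
H\"older between the resulting $L^{p_\ast}$ bound and the two-ends $L^2$ bound $\lambda\beta r\|F_\tau[f]\|_{L^2}^2$ is exactly what produces $p_0-2=4/(m+n+2\gamma_n)$. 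The $n=2$ and $n\ge3$ formulas do not come from Du--Zhang or polynomial partitioning.

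Your fallback for larger $n$ cannot reach the stated range. You propose to import the best Euclidean local smoothing estimate in dimension $n$ and combine it with torus $\ell^2$ decoupling by Fubini or iteration. But whenever $m\ge1$,
$p_0-2=4/(m+n+2\gamma_n)<4/(n+2\gamma_n)=p_{\mathrm E}-2$,
so the exponents you need lie below those where the $n$-dimensional sharp-regularity input is available. For $n=1$ that input is actually false for $p<4$ (the required $s$ is negative), while $p_0=2+4/(m+2)<4$.

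A secondary gap concerns your caps. Caps containing a single lattice point (your caps of size $R^{-1}$, i.e.\ $L^{-1}$ in the paper's normalization) describe only the top scale. At the local scales $r$ reached in the argument, which vary because of the dyadic time decomposition and the induction on scales, a cap of size $r^{-1/2}$ contains up to $(L/r^{1/2})^m$ lattice points. The paper pigeonholes this count $q$ and pays $q^{2/(m+n)}\lesssim(L/r^{1/2})^{2m/(m+n)}$ in the refined decoupling bound. This combines with the other factors to give precisely $(L^mT^n)^{p_\ast/2-1}$.
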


Since
\begin{equation*}
p_0<2+\frac4{m+n}
\end{equation*}
for all $m,n\geq1$, Theorem \ref{thm:main} improves the result of  \cite{CGSY} in every dimension.
In particular, when $n=1$, it establishes \eqref{eq:conj} for
\begin{equation*}
p>2+\frac4{m+2},
\qquad
s>(m+2)\left(\frac12-\frac1p\right)-\frac2p,
\end{equation*}
and this range of $p$ is sharp up to the endpoint under the stated Sobolev regularity condition.

The Euclidean Schr\"odinger local smoothing problem is closely related to Fourier restriction estimates. 
Rogers \cite{R} showed that the restriction conjecture for the paraboloid implies the Schr\"odinger local smoothing conjecture through a pseudo-conformal transformation. 
This argument does not directly extend to $\mathbb T^m\times\mathbb R^n$, since the periodic Fourier frequencies are 
discrete and the pseudo-conformal change of variables does not preserve periodicity. 
Nevertheless, the common quadratic phase suggests that restriction methods may be adapted to the waveguide problem, 
provided that the discrete frequency structure is taken into account.

The argument for Theorem \ref{thm:main} also applies with $m=0$, in which case the periodic variables are absent.
This gives the following Euclidean estimate.

\begin{corollary}\label{cor:Euclidean}
Let $n\geq1$ and
\begin{equation*}
p_{\mathrm E}=
\begin{cases}
\displaystyle 4,&n=1,\\[4pt]
\displaystyle\frac{22}{7},&n=2,\\[4pt]
\displaystyle 2+\frac{4(7n+36)}{11n^2+54n},&n\geq3.
\end{cases}
\end{equation*}
For every $p_{\mathrm E}<p<\infty$ and
\begin{equation*}
s>2n\left(\frac12-\frac1p\right)-\frac2p,
\end{equation*}
we have
\begin{equation}\label{eq:Euclidean}
\|e^{-it\Delta_{\mathbb R^n}}g\|_{L^p(\mathbb R^n\times[0,1])}
\lesssim
\|g\|_{L_s^p(\mathbb R^n)}.
\end{equation}
\end{corollary}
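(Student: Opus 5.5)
Corollary \ref{cor:Euclidean} follows by running the proof of Theorem \ref{thm:main} verbatim with $m=0$. In that case the periodic variables disappear and $\mathbb T^0\times\mathbb R^n=\mathbb R^n$. The plan has three steps: check that every ingredient still makes sense when $m=0$, substitute $m=0$ into the formula for $p_0$, and confirm that the regularity threshold specializes correctly.

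First, consider the structure of the argument behind Theorem \ref{thm:main}. It reduces to frequency-localized pieces: by Littlewood--Paley decomposition and the Bessel potential space it suffices to prove, for $g$ with $\widehat g$ supported in $\{|\xi|\sim R\}$,
\begin{equation*}
\|e^{-it\Delta_{\mathbb R^n}}g\|_{L^p(\mathbb R^n\times[0,1])}\lesssim_\varepsilon R^{2n(\frac12-\frac1p)-\frac2p+\varepsilon}\|g\|_{L^p},
\end{equation*}
and then sum dyadically using $s>2n(\frac12-\frac1p)-\frac2p$. After parabolic rescaling $\xi\mapsto\xi/R$, $t\mapsto R^2t$, $y\mapsto Ry$, this becomes an extension estimate for the unit paraboloid in $\mathbb R^{n+1}$. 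The estimate is tested on space-time regions of size $R\times R^2$, and these are tiled into $R$-balls. In the waveguide case the transference step is there to keep the lattice $\mathbb Z^m$ in frequency. When $m=0$ there is no lattice, so the transference is the identity and the standard localization to $R$-balls applies, using the finite speed of propagation of wave packets at frequency $\lesssim R$.

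Second, the refined decoupling estimate adapted to waveguides specializes to the usual refined (Guth--Iosevich--Ou--Wang type) decoupling for the paraboloid in $\mathbb R^{n+1}$. It should be combined with the same wave-packet counting and induction on scales, with the same choice of parameters that produced $p_0$. I expect this to be the main point to verify: that no step of that chain divides by $m$ or uses $m\geq1$ in an essential way, such as a Strichartz estimate on $\mathbb T^m$ or lattice-point counting. If such a step occurs, its role should trivialize at $m=0$: lattice counting in zero dimensions gives the count $1$, and the $\mathbb T^m$ Strichartz factor becomes $1$. With the same exponent bookkeeping, the admissible range then becomes $p>p_0|_{m=0}$.

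Third, substituting $m=0$ into the formula for $p_0$ gives:
\begin{itemize}
\item for $n=1$, $2+\tfrac42=4$;
\item for $n=2$, $2+\tfrac87=\tfrac{22}{7}$;
\item for $n\ge3$, $2+\tfrac{4(7n+36)}{11n^2+54n}$.
\end{itemize}
This is exactly $p_{\mathrm E}$. Likewise the threshold $(m+2n)(\frac12-\frac1p)-\frac2p$ becomes $2n(\frac12-\frac1p)-\frac2p$, which gives \eqref{eq:Euclidean}.
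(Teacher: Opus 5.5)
Your proposal is the paper's argument: the paper obtains Corollary~\ref{cor:Euclidean} by noting that the proof of Theorem~\ref{thm:main} goes through with $m=0$. Your substitutions $p_0|_{m=0}=p_{\mathrm E}$ and $(m+2n)(\frac12-\frac1p)-\frac2p\mapsto 2n(\frac12-\frac1p)-\frac2p$ are correct.

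Your sketch of the underlying proof is loose, though none of the steps it actually uses depends on $m\geq 1$:
\begin{itemize}
\item The paper does not tile into $R$-balls. It reduces via restricted weak-type interpolation to a mixed $L^2$--$L^\infty$ estimate, then localizes with a dyadic time decomposition into balls of radius $T\sim t$.
\item The paper uses no Strichartz estimate on $\mathbb T^m$.
\end{itemize}
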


The Sobolev regularity in \eqref{eq:Euclidean} agrees with that in the Euclidean Schr\"odinger local smoothing conjecture. 
Rogers and Seeger \cite{RS} established Euclidean Schr\"odinger local smoothing estimates for $p>2+4/(n+1)$. 
Further space-time estimates were obtained by Lee, Rogers and Seeger \cite{LRS}, 
while Guo, Roos and Yung \cite{GRY} improved the range of $p$ for local smoothing in higher dimensions. 
Related estimates for fractional Schr\"odinger operators were established in \cite{GMZ,GOW}.

When $n=2$, Corollary \ref{cor:Euclidean} gives the range $p>22/7$, which also follows from the restriction estimate of 
Wang and Wu \cite{WW} through Rogers' result \cite{R}. 
Compared with \cite{GMZ}, our estimate holds in a wider range of $p$ when $n\geq4$ is even or $n\geq11$ is odd.

Schr\"odinger local smoothing estimates are closely related to maximal estimates and pointwise convergence, as studied in \cite{R,RS}. 
The pointwise convergence problem was initiated by Carleson \cite{C}, and the sharp regularity for $H^s(\mathbb R^n)$ initial data 
in dimensions $n\geq2$ were established, up to the endpoints, in \cite{DGL,DZ}. 
Related results for $L_s^p(\mathbb R^n)$ initial data can be found in \cite{LRS,PST}. 
Following the passage from space-time estimates to maximal estimates in \cite{RS}, Corollary \ref{cor:Euclidean} yields the following consequence.

\begin{corollary}\label{cor:pointwise}
Let $n\geq1$ and let $p_{\mathrm E}$ be as in Corollary \ref{cor:Euclidean}. Suppose that
\begin{equation*}
p_{\mathrm E}<p<\infty,
\qquad s>2n\left(\frac12-\frac1p\right).
\end{equation*}
Then
\begin{equation*}
\left\|\sup_{0<t<1}|e^{-it\Delta_{\mathbb R^n}}g|\right\|_{L^p(\mathbb R^n)}
\lesssim \|g\|_{L_s^p(\mathbb R^n)}.
\end{equation*}
Consequently, for every $g\in L_s^p(\mathbb R^n)$,
\begin{equation*}
\lim_{t\to 0^+}e^{-it\Delta_{\mathbb R^n}}g(y)=g(y)
\end{equation*}
for almost every $y\in\mathbb R^n$.
\end{corollary}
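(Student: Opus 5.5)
The plan is to follow the Rogers--Seeger route from space-time estimates to maximal estimates, using Corollary \ref{cor:Euclidean} as the space-time input. By Littlewood--Paley theory and the triangle inequality over dyadic scales, it suffices to treat $g$ with $\widehat g$ supported in the annulus $|\xi|\sim\lambda$, $\lambda\geq1$ dyadic (the low-frequency piece is handled trivially by kernel bounds). The goal for such $g$ is
\begin{equation*}
\Bigl\|\sup_{0<t<1}|e^{-it\Delta_{\mathbb R^n}}g|\Bigr\|_{L^p(\mathbb R^n)}\lesssim_\varepsilon \lambda^{2n(\frac12-\frac1p)+\varepsilon}\|g\|_{L^p(\mathbb R^n)}.
\end{equation*}
Since $s>2n(\frac12-\frac1p)$ is strict, these bounds sum over $\lambda$ and give the maximal estimate.

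To get the frequency-localized bound, I would use the standard Sobolev embedding in $t$. Write $u=e^{-it\Delta}g$. The time-frequency of $u$ is localized to $|\tau|\lesssim\lambda^2$, so for a cutoff in $t\in[0,1]$,
\begin{equation*}
\sup_{0<t<1}|u(y,t)|\lesssim \|u(y,\cdot)\|_{L^p_t}^{1-1/p}\|\partial_tu(y,\cdot)\|_{L^p_t}^{1/p}+\|u(y,\cdot)\|_{L^p_t}.
\end{equation*}
This comes from the fundamental theorem of calculus applied to $|u|^p$ together with H\"older. Integrating in $y$ and applying H\"older again, using that $\partial_t u=e^{-it\Delta}(\Delta g)$ has the same frequency localization with an extra factor $\lambda^2$, gives
\begin{equation*}
\Bigl\|\sup_t|u|\Bigr\|_{L^p_y}\lesssim \lambda^{2/p}\,\|e^{-it\Delta}g\|_{L^p(\mathbb R^n\times[0,1])}.
\end{equation*}
Corollary \ref{cor:Euclidean}, applied at frequency $\lambda$, bounds the right-hand side by $\lambda^{2n(\frac12-\frac1p)-\frac2p+\varepsilon}\|g\|_p$. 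The factor $\lambda^{2/p}$ exactly compensates the $-2/p$, which yields the desired bound. Equivalently, one can choose $s'=s-2/p$ to be admissible in Corollary \ref{cor:Euclidean}. The main technical point is the careful justification of the $t$-Sobolev step with a smooth time cutoff, including handling the boundary term. This is routine once a bump $\chi(t)$ is inserted, and the condition $\widehat g\subset\{|\xi|\sim\lambda\}$ ensures that $\partial_t$ costs at most $\lambda^2$ in $L^p$ via Bernstein/multiplier bounds.

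The pointwise convergence then follows by the standard density argument. Take a Schwartz sequence $g_k\to g$ in $L^p_s$. For Schwartz data, convergence $e^{-it\Delta}g_k\to g_k$ holds everywhere. The maximal inequality controls $\limsup_{t\to0}|e^{-it\Delta}g-g|$ by $\sup_t|e^{-it\Delta}(g-g_k)|+|g-g_k|$, whose $L^p$ norm is at most $C\|g-g_k\|_{L^p_s}\to0$. Hence the $\limsup$ vanishes almost everywhere.
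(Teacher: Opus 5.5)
Your proposal is correct and takes the same route the paper indicates. The paper gives no separate argument and simply invokes the Rogers--Seeger passage from space-time to maximal estimates. That passage is exactly your scheme: Littlewood--Paley localization, the fundamental-theorem-of-calculus Sobolev embedding in $t$ on $[0,1]$ at cost $\lambda^{2/p}$, then Corollary \ref{cor:Euclidean} and the standard density argument for almost everywhere convergence. The $\lambda^{2}$ bound for $\partial_t u$ holds because $\partial_t u$ is a constant multiple of $\Delta_y u$, so spatial Bernstein applies at each fixed time; no smooth time cutoff is needed.
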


Our proof builds on the method of Wang and Wu \cite{WW},
which combines refined decoupling with two-ends Furstenberg inequalities.
To extend this method to waveguides, we develop a transference argument that retains the discrete periodic frequency structure throughout
the proof. We also establish a refined decoupling estimate adapted to waveguides,
based on the theorem of Guth, Iosevich, Ou and Wang \cite{GIOW}.
Combining these estimates with the geometric inequalities of \cite{WW}, we prove Theorem \ref{thm:main}.

The paper is organized as follows. In Section \ref{sec:preliminaries}, we collect preliminary estimates.
Section \ref{sec:reductions} reduces the local smoothing problem to a Euclidean estimate through Littlewood--Paley decomposition,
interpolation and a continuous extension of the periodic frequency data. In Section \ref{sec:wave-packets},
we introduce the wave packet decomposition and establish the corresponding $L^2$ estimates.
Section \ref{sec:refined-decoupling} is devoted to the refined decoupling estimate. In Section \ref{sec:local-estimate},
we combine refined decoupling with geometric estimates and induction on scales to complete the proof of Theorem \ref{thm:main}.

\textbf{Notation.}
Throughout this paper, for nonnegative quantities $A$ and $B$, we write $A\lesssim B$ if $A\leq CB$ for some positive constant $C$
independent of the relevant variables.
We write $A\gtrsim B$ if $B\lesssim A$, and $A\sim B$ if both $A\lesssim B$ and $A\gtrsim B$ hold.
We write $A\ll B$ or $B\gg A$ if $A\leq cB$ for a sufficiently small absolute constant $c>0$.

We use $B_r(z)$ and $B(z,r)$ interchangeably to denote the ball of radius $r$ centered at $z$.
When the dimension is specified, we write $B_r^j(z)$ or $B^j(z,r)$ for balls in $\mathbb R^j$.
When the domain of an $L^p$ norm is omitted, the norm is taken over the entire space of the function.

We set $e(z)=e^{2\pi iz}$.
For a function $h$ on $\mathbb R^n$, its Fourier transform $\widehat h$ and inverse Fourier transform $h^\vee$ are defined by
\begin{equation*}
\widehat h(\xi)
=\int_{\mathbb R^n}h(y)e(-y\cdot\xi)\rmd y,
\qquad
h^\vee(y)
=\int_{\mathbb R^n}h(\xi)e(y\cdot\xi)\rmd\xi.
\end{equation*}
For a function $h$ on $\mathbb T_L^m\times\mathbb R^n$, where $\mathbb T_L^m=\mathbb R^m/(L\mathbb Z^m)$, we define
\begin{equation*}
\begin{aligned}
\widehat h(k,\xi)
&=\frac1{L^m}\int_{\mathbb T_L^m}\int_{\mathbb R^n}
h(x,y)e(-k\cdot x-\xi\cdot y)\rmd y\rmd x,\\
h(x,y)
&=\sum_{k\in L^{-1}\mathbb Z^m}
\int_{\mathbb R^n}
\widehat h(k,\xi)e(k\cdot x+\xi\cdot y)\rmd\xi.
\end{aligned}
\end{equation*}
We use $\widehat h$ and $h^\vee$ to denote the Fourier transform and inverse Fourier transform,
respectively, regardless of the underlying space.

\section{Preliminaries}\label{sec:preliminaries}

We collect several estimates for oscillatory integrals, a restricted weak-type interpolation theorem,
and the geometric inequalities used in the proof of our main result.

\subsection{Oscillatory Integral Estimates}
We recall the standard non-stationary phase estimate from Stein \cite{St} and
use it to obtain kernel estimates for the Schr\"odinger propagator.

For an oscillatory integral of the form
\begin{equation*}
\int_{\mathbb{R}^{d}}e^{i\lambda \phi(x)}\psi(x)\rmd x,
\end{equation*}
we say that a phase function $\phi$ defined in a neighborhood of a point $x_0$ has $x_0$ as a critical point if
\begin{equation*}
(\nabla \phi)(x_0)=\left. \left(\frac{\partial \phi}{\partial x_1}, \cdots, \frac{\partial \phi}{\partial x_d}\right)\right|_{x=x_0}=0.
\end{equation*}

\begin{proposition}[{\cite[Proposition 4, Page 341]{St}}]\label{prop:noncritical}
Suppose $\psi$ is smooth, has compact support, and that $\phi$ is a smooth real-valued function that has no critical points in the support of $\psi$. Then
\begin{equation*}
I(\lambda)=\int_{\mathbb{R}^{d}}e^{i\lambda \phi(x)}\psi(x)\rmd x=O(\frac{1}{\lambda^{N}}),
\end{equation*}
as $\lambda\to \infty$, for every $N\geq 0$.
\end{proposition}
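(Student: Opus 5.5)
The plan is the standard integration-by-parts argument. Since $\phi$ is smooth and $\nabla\phi$ does not vanish on the compact set $K=\supp\psi$, continuity gives a constant $c>0$ with $|\nabla\phi(x)|\geq c$ on $K$. We also fix an open neighborhood $U$ of $K$ on which $|\nabla\phi|\geq c/2$. On $U$ we introduce the first-order differential operator
\begin{equation*}
D h=\frac{1}{i\lambda}\,\frac{\nabla\phi\cdot\nabla h}{|\nabla\phi|^2},
\end{equation*}
which satisfies $D\bigl(e^{i\lambda\phi}\bigr)=e^{i\lambda\phi}$. Its formal transpose is
\begin{equation*}
D^{t}h=-\frac{1}{i\lambda}\,\nabla\cdot\Bigl(\frac{\nabla\phi}{|\nabla\phi|^2}\,h\Bigr).
\end{equation*}

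For each $N$ we write $I(\lambda)=\int D^N(e^{i\lambda\phi})\,\psi\,\rmd x$ and integrate by parts $N$ times, which gives
\begin{equation*}
I(\lambda)=\int e^{i\lambda\phi}(D^t)^N\psi\,\rmd x .
\end{equation*}
No boundary terms appear, because $\psi$ has compact support inside $U$ and every intermediate function $(D^t)^j\psi$ is again smooth with support in $K$. Each application of $D^t$ produces a factor $\lambda^{-1}$ together with a smooth function. By induction, $(D^t)^N\psi=\lambda^{-N}\psi_N$, where $\psi_N$ is a finite sum of products of derivatives of $\psi$ of order at most $N$, derivatives of $\phi$ of order at most $N+1$, and negative powers of $|\nabla\phi|^2$. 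Since $|\nabla\phi|\geq c$ on $K$, all of these are bounded there. Hence $|I(\lambda)|\leq \lambda^{-N}\|\psi_N\|_{L^1}=O(\lambda^{-N})$ for $\lambda\geq1$, and the constant depends on $N$, $c$, $K$, and finitely many derivatives of $\phi$ and $\psi$.

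There is no real obstacle in this argument. The only points that need care are that the division by $|\nabla\phi|^2$ is legitimate, which is guaranteed by the lower bound on the compact support, and that the integrations by parts create no boundary contributions, which follows from the compact support of $\psi$. An alternative route, equally short, is a partition of unity on $K$ that reduces to the case where a single partial derivative $\partial_j\phi$ is nonvanishing on each piece, followed by integration by parts in that one variable. The vector-field version above avoids the partition and is what I would write.
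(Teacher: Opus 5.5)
Your argument is correct and complete. The paper does not prove this statement itself; it quotes Stein. Stein's proof is the localized version you mention at the end: around each point of $\supp\psi$ one picks a unit vector $\xi$ and a small ball on which $\xi\cdot\nabla\phi\geq c>0$, splits $\psi$ by a partition of unity, rotates so that $\xi=e_1$, and integrates by parts in $x_1$ alone, exactly as in the one-dimensional case. Your global operator $D$ removes the partition of unity and the rotations. The price is that the coefficients of $(D^t)^N$ involve negative powers of $|\nabla\phi|^2$ rather than of a single partial derivative; neither route is harder.

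One thing your write-up records explicitly is that the constant depends only on $N$, $\supp\psi$, a lower bound for $|\nabla\phi|$ there, and finitely many derivatives of $\phi$ and $\psi$. This is something the paper genuinely needs. In Corollary~\ref{cor:kernel} the proposition is applied to $\phi(v)=\frac{z+2u\omega_0}{|z+2u\omega_0|}\cdot v+\frac{u}{r\lambda}|v|^2$, which varies with $z,u,\omega_0,r$ and even with $\lambda$. The fixed-phase statement ``as $\lambda\to\infty$'' therefore does not literally cover that use. Your bound $|I(\lambda)|\leq\lambda^{-N}\|\psi_N\|_{L^1}$ does cover it, uniformly over the family: on $\supp\chi$ one has $|\nabla\phi|\geq 1/2$, the second derivatives are bounded by $2|u|/(r\lambda)\lesssim 1$, and all higher derivatives vanish.
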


The non-stationary phase estimate, together with the Fourier transform of the quadratic phase, gives the following bounds.

\begin{corollary}\label{cor:kernel}
Let $\chi\in C_c^\infty(\mathbb{R}^d)$, $d\geq 1$. For $r\geq 1$, $\omega_0\in\mathbb{R}^d$ and $|u|\lesssim r$, we have
\begin{equation}\label{eq:kernel-cap}
\left|\int_{\mathbb{R}^d} \chi\big( r^{1/2}(\omega-\omega_0)\big) e(z\cdot\omega+u|\omega|^2)\rmd\omega\right|
\lesssim \frac{1}{r^{d/2}\left(1+{|z+2u\omega_0|}/{r^{1/2}}\right)^N}
\end{equation}
for every $N\geq 0$.

Set
\begin{equation*}
K_t(z)=\int_{\mathbb{R}^d}
\chi(\omega)e(z\cdot\omega+t|\omega|^2)\rmd\omega.
\end{equation*}
For $|t|\sim T\geq 1$, we have
\begin{equation}\label{eq:kernel-time}
|K_t(z)|\lesssim\frac{1}{T^{d/2}(1+|z|/T)^N},
\qquad
\|K_t\|_{L^1}\lesssim T^{d/2}.
\end{equation}
Moreover, for sufficiently large $C$,
\begin{equation}\label{eq:kernel-noncritical}
|K_t(z)|\lesssim \frac{1}{(1+|z|)^N},
\qquad |z|\geq C(1+|t|).
\end{equation}
Consequently,
\begin{equation*}
\|K_t\|_{L^1}
\lesssim (1+|t|)^{d/2}, \qquad t\in \mathbb{R}.
\end{equation*}
\end{corollary}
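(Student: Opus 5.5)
For \eqref{eq:kernel-cap} I will rescale and then apply Proposition \ref{prop:noncritical}. Substitute $\omega=\omega_0+r^{-1/2}\eta$. Then
\begin{equation*}
z\cdot\omega+u|\omega|^2
= z\cdot\omega_0+u|\omega_0|^2
+ r^{-1/2}(z+2u\omega_0)\cdot\eta + u r^{-1}|\eta|^2 .
\end{equation*}
The first two terms are constant and only contribute a unimodular factor. The integral therefore equals $r^{-d/2}$ times, in modulus,
\begin{equation*}
\int \chi(\eta)\, e\bigl(w\cdot\eta + v|\eta|^2\bigr)\rmd\eta,
\qquad w=r^{-1/2}(z+2u\omega_0),\quad v=u/r .
\end{equation*}
Here $|v|\lesssim 1$. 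If $|w|\le C$, the trivial bound $\|\chi\|_{L^1}$ suffices. If $|w|\ge C$ with $C$ large compared to $\sup|v|$ and to the support of $\chi$, write the phase as $|w|\,\phi(\eta)$ with $\phi(\eta)=\theta\cdot\eta+(v/|w|)|\eta|^2$ and $\theta=w/|w|$. Then $|\nabla\phi|\ge 1-2|v|\,|\eta|/|w|\ge 1/2$ on $\supp\chi$. Proposition \ref{prop:noncritical} gives decay $O(|w|^{-N})$.

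The main point to check is that this bound is uniform over the family of phases, parametrized by $\theta\in S^{d-1}$ and $v/|w|$ in a compact set. The integration-by-parts proof of Proposition \ref{prop:noncritical} only uses upper bounds on finitely many derivatives of $\phi$ and $\psi$ and a lower bound on $|\nabla\phi|$, so the constants are uniform. I would remark this explicitly, or simply redo the integration by parts with the operator $L=(2\pi i|\nabla\Phi|^2)^{-1}\nabla\Phi\cdot\nabla$. This yields \eqref{eq:kernel-cap}.

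For \eqref{eq:kernel-time}, the natural plan is to take $r=T$, $u=t$ and $\omega_0=0$ in \eqref{eq:kernel-cap}. However, \eqref{eq:kernel-cap} is stated for a bump of width $r^{-1/2}$, whereas $\chi(\omega)$ has width $1$. I will instead cover $\supp\chi$ by a smooth partition of unity into roughly $T^{d/2}$ bumps $\chi_j(\omega)=\chi\chi_0\bigl(T^{1/2}(\omega-\omega_j)\bigr)$, with the same uniformity remark as above. Each piece is bounded by $T^{-d/2}(1+|z+2t\omega_j|/T^{1/2})^{-N}$.

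Summing over $j$ with $|t|\sim T$: the points $2t\omega_j$ form a $T^{1/2}$-separated set in a ball of radius about $T$. The sum is therefore $\lesssim T^{-d/2}$ when $|z|\lesssim T$, and $\lesssim T^{d/2}T^{-d/2}(|z|/T^{1/2})^{-N}$ when $|z|\gg T$. Both cases give $T^{-d/2}(1+|z|/T)^{-N}$ after renaming $N$. Integrating this pointwise bound gives $\|K_t\|_{L^1}\lesssim T^{-d/2}T^d=T^{d/2}$.

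For \eqref{eq:kernel-noncritical}, write the phase as $|z|\phi$ with $\phi(\omega)=\theta\cdot\omega+(t/|z|)|\omega|^2$. When $|z|\ge C(1+|t|)$ we have $|t|/|z|\le 1/C$, so $|\nabla\phi|\ge1/2$ on $\supp\chi$. Uniform non-stationary phase then gives $|K_t(z)|\lesssim|z|^{-N}\sim(1+|z|)^{-N}$.

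For the final $L^1$ bound, split into $|t|\ge1$ and $|t|<1$. For $|t|\ge1$ this is \eqref{eq:kernel-time}. For $|t|<1$, $K_t$ is bounded by $\|\chi\|_{L^1}$ on $|z|\le 2C$ and decays by \eqref{eq:kernel-noncritical} outside, so $\|K_t\|_{L^1}\lesssim1$.
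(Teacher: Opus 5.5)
Your proof is correct. For \eqref{eq:kernel-cap} and \eqref{eq:kernel-noncritical} you follow the paper's argument. You rescale by $r^{1/2}$ about $\omega_0$, use the trivial bound when $|z+2u\omega_0|/r^{1/2}$ is bounded, and otherwise apply non-stationary phase with $|\nabla\phi|\ge 1/2$. Your remark that the non-stationary phase constants are uniform over the compact family of phases $\theta\cdot\eta+s|\eta|^2$ is a point the paper uses without comment.

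The genuine difference is in \eqref{eq:kernel-time}. The paper gets the dispersive bound $|K_t|\lesssim |t|^{-d/2}$ in one line from the explicit Fourier transform of the chirp $e(t|\omega|^2)$ and Young's inequality, namely $\|\chi^\vee\|_{L^1}$ times $\|(e(t|\cdot|^2))^\vee\|_{L^\infty}$. It then treats the region $|z|>C(1+|t|)$ separately using \eqref{eq:kernel-noncritical}. You instead split $\chi$ into about $T^{d/2}$ caps of size $T^{-1/2}$, apply \eqref{eq:kernel-cap} to each, and sum using that the points $2t\omega_j$ are $T^{1/2}$-separated. This yields both the $T^{-d/2}$ bound and the decay for $|z|\gg T$ in one step.

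The paper's route is shorter but depends on the exact Gaussian formula. Yours uses only the cap estimate and the fact that the gradient map $\omega\mapsto 2t\omega$ separates the caps, so it would extend to phases with nondegenerate Hessian where no closed form is available. The cost is that you need \eqref{eq:kernel-cap} uniformly over the amplitudes $\chi(\omega_j+T^{-1/2}\eta)\chi_0(\eta)$, which you correctly flag. For $|t|<1$ the paper applies \eqref{eq:kernel-cap} with $r=1$, while you combine the trivial bound with \eqref{eq:kernel-noncritical}; the two are equivalent.
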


\begin{proof}
For \eqref{eq:kernel-cap}, the case $z+2u\omega_0=0$ is immediate. Otherwise, changing variables $v=r^{1/2}(\omega-\omega_0)$, set
\begin{equation*}
\lambda=\frac{|z+2u\omega_0|}{r^{1/2}}, \qquad
\phi(v)=\frac{z+2u\omega_0}{|z+2u\omega_0|}\cdot v+\frac{u}{r\lambda}|v|^2,
\end{equation*}
we obtain
\begin{equation*}
\int_{\mathbb{R}^d}\chi\big(r^{1/2}(\omega-\omega_0)\big)e(z\cdot\omega+u|\omega|^2)\rmd\omega
=\frac{e(z\cdot\omega_0+u|\omega_0|^2)}{r^{d/2}}
\int_{\mathbb{R}^d}\chi(v)e\left(\lambda \phi(v)\right)\rmd v.
\end{equation*}
Since $\chi\in C_c^\infty(\mathbb{R}^d)$ and $|u|\lesssim r$, we may fix $A,C_0\geq 1$ such that $\supp\chi\subset B(0,A)$ and $|u|\leq C_0r$.

If $\lambda\geq 4C_0A$, note that
\begin{equation*}
\nabla\phi(v)=
\frac{z+2u\omega_0}{|z+2u\omega_0|}+\frac{2u}{r\lambda}v,
\end{equation*}
and hence
\begin{equation*}
|\nabla\phi(v)|
\geq 1-\frac{2|u||v|}{r\lambda}
\geq \frac12,
\qquad v\in\supp\chi.
\end{equation*}
Applying Proposition \ref{prop:noncritical}, we obtain
\begin{equation*}
\left|\int_{\mathbb{R}^d}\chi(v)e(\lambda\phi(v))\rmd v\right|
\lesssim \frac{1}{\lambda^N}.
\end{equation*}

If $\lambda<4C_0A$, the same integral is bounded by $\|\chi\|_{L^1}$.
Combining the two cases gives
\begin{equation*}
\left|\int_{\mathbb{R}^d}\chi(v)e\left(\frac{z+2u\omega_0}{r^{1/2}}\cdot v+\frac{u}{r}|v|^2\right)\rmd v\right|
\lesssim \frac{1}{(1+\lambda)^N},
\end{equation*}
which proves \eqref{eq:kernel-cap}. The same argument gives \eqref{eq:kernel-noncritical} for $|z|\geq C(1+|t|)$.

For \eqref{eq:kernel-time}, the quadratic Gaussian Fourier transform gives
\begin{equation*}
\big(e(t|\cdot|^2)\big)^\vee(z)
= \frac{c_{d,t}}{|t|^{d/2}}
e\left(-\frac{|z|^2}{4t}\right),
\qquad |c_{d,t}|\lesssim 1.
\end{equation*}
Thus, by Young's inequality,
\begin{equation*}
|K_t(z)|=\left|(\chi\cdot e(t|\cdot|^{2}))^{\vee}(z)\right|
=\left|\chi^{\vee}\ast e^{\vee}(t|\cdot|^{2})(z)\right|
\lesssim \frac{\|\chi^{\vee}\|_{L^{1}}}{|t|^{d/2}}
\lesssim \frac{1}{T^{d/2}}.
\end{equation*}
If $|z|\leq C(1+|t|)$, then $1+|z|/T\lesssim 1$, and hence
\begin{equation*}
|K_t(z)|\lesssim
\frac{1}{T^{d/2}(1+|z|/T)^N}.
\end{equation*}
If $|z|>C(1+|t|)$, taking $M>N+d/2$ in \eqref{eq:kernel-noncritical} and using $|z|\gtrsim T$, we obtain
\begin{equation*}
|K_t(z)|\lesssim
\frac{1}{(1+|z|)^{d/2}(1+|z|/T)^{M-d/2}}\lesssim \frac{1}{T^{d/2}(1+|z|/T)^N}.
\end{equation*}
Combining both cases gives \eqref{eq:kernel-time}. Integrating with $N>d$ yields
$\|K_t\|_{L^1}\lesssim T^{d/2}$.

For $|t|<1$, \eqref{eq:kernel-cap} yields $\|K_t\|_{L^{1}}\lesssim 1$. Combining \eqref{eq:kernel-noncritical}, we obtain
$\|K_t\|_{L^{1}}\lesssim (1+|t|)^{d/2}$, for all $t\in \mathbb{R}$.
\end{proof}

\subsection{Restricted Interpolation}
We use restricted weak-type estimates to obtain strong-type bounds in the reduction of the local smoothing problem.
We recall the relevant definitions and an interpolation theorem from Grafakos \cite{G249}.

\begin{definition}
Let $T$ be an operator defined on the space of finitely simple functions $S(X)$ on a measure space $(X,\mu)$ and let $0<p, q\leq \infty$.
We say that $T$ is of restricted weak type (p,q) if
\begin{equation*}
\|T(1_A)\|_{L^{q,\infty}}\leq C\mu(A)^{1/p}
\end{equation*}
for all measurable subsets $A$ of $X$ with finite measure. Estimates of the form above are called restricted weak type estimates.
\end{definition}

\begin{proposition}[{\cite[Corollary 1.4.22]{G249}}]\label{prop:interpolation}
Let $T$ be a quasi-linear operator defined on the space of simple functions on $X$ and taking values in the set of measurable functions on $Y$.
Let $0<p_0\neq p_1\leq \infty$ and $0<q_0\neq q_1\leq \infty$. If $T$ is restricted weak type $(p_0, q_0)$ and $(p_1, q_1)$ with constants $M_0$ and $M_1$,
respectively, and for some $0<\theta<1$ we have
\begin{equation*}
\frac{1}{p}=\frac{1-\theta}{p_0}+\frac{\theta}{p_1}, \qquad \frac{1}{q}=\frac{1-\theta}{q_0}+\frac{\theta}{q_1},
\end{equation*}
and $p\leq q$, then $T$ satisfies the strong type estimate
\begin{equation}\label{eq:interpolation}
\|Tf\|_{L^{q}}\leq CM_0^{1-\theta}M_1^{\theta}\|f\|_{L^{p}}
\end{equation}
for all $f$ in $S_0(X)$. Moreover, if $T$ is linear (or sublinear with nonnegative values),
then it has a unique bounded extension from $L^{p}(X,\mu)$ to $L^{q}(Y,\nu)$ that satisfies estimate \eqref{eq:interpolation}
for all $f\in L^{p}(X)$ with the constant $C$ replaced by $C2^{2/p}\max\{1, 2^{1/p-1}\}^{2}$.
\end{proposition}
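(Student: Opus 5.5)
This statement is quoted from Grafakos \cite[Corollary 1.4.22]{G249}, and the paper will use it as a black box. If I had to prove it, I would follow the standard route through Lorentz spaces. The argument has two stages: first upgrade restricted weak type to weak type on all of $L^{p_i,1}$, then apply the Marcinkiewicz/Lorentz interpolation machinery.

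\textbf{Step 1: from characteristic functions to simple functions.} Fix $i\in\{0,1\}$ and take a simple function $f\ge 0$, written in layer-cake form $f=\sum_j a_j 1_{A_j}$ with $a_j>0$ and nested sets $A_1\supset A_2\supset\cdots$. Then
\[
\|f\|_{L^{p_i,1}}\sim\sum_j a_j\mu(A_j)^{1/p_i}.
\]
Quasi-linearity gives the pointwise bound $|Tf|\le K\sum_j a_j|T1_{A_j}|$. Combining this with the restricted weak type hypothesis would yield
\[
\|Tf\|_{L^{q_i,\infty}}\lesssim M_i\|f\|_{L^{p_i,1}}.
\]
When $q_i\ge 1$ this is routine, since $L^{q_i,\infty}$ is normable. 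For $q_i<1$ one uses the $q_i$-convexity of $L^{q_i,\infty}$ instead; this costs only constants once one works with the $r$-th power of the quasi-norm for $r<q_i$. Complex-valued $f$ are handled by splitting into real and imaginary, positive and negative parts.

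\textbf{Step 2: real interpolation.} For $f\in S_0(X)$ and a threshold $\lambda>0$, split
\[
f=f_0+f_1,\qquad f_0=f1_{\{|f|>\gamma\lambda^{a}\}},\qquad f_1=f-f_0,
\]
with the exponent $a$ chosen, as in Marcinkiewicz's proof, by matching the lines $1/p$ versus $1/q$. Then estimate
\[
\nu\bigl(|Tf|>\lambda\bigr)\le \nu\bigl(|Tf_0|>\lambda/2K\bigr)+\nu\bigl(|Tf_1|>\lambda/2K\bigr)
\]
using the two weak bounds from Step 1. Integrating $q\lambda^{q-1}$ against this, interchanging sums and integrals, and using the hypothesis $p\le q$ gives
\[
\|Tf\|_{L^q}\le CM_0^{1-\theta}M_1^{\theta}\|f\|_{L^{p,q}}\le C'M_0^{1-\theta}M_1^{\theta}\|f\|_{L^p},
\]
where the last step is the embedding $L^p\subset L^{p,q}$ for $p\le q$. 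The sharp factor $M_0^{1-\theta}M_1^\theta$ comes from rescaling: apply the bound to $T/M_0$ with the measure dilated so that the two constants become equal.

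\textbf{Step 3: extension.} For linear, or sublinear nonnegative, $T$, density of simple functions in $L^p$ (valid since $p<\infty$) gives a unique bounded extension, with the stated constant.

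The main obstacle is Step 1 in the non-normable range $q_i<1$, which requires care with the quasi-triangle inequality. Fortunately, in this paper only $p_i,q_i\ge1$ will occur.
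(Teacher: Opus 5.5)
The paper gives no proof of this proposition; it imports it from Grafakos. So the only question is whether your sketch holds up. Your overall route (restricted weak type $\Rightarrow$ endpoint Lorentz bound $\Rightarrow$ off-diagonal real interpolation $\Rightarrow$ $L^p\subset L^{p,q}$ $\Rightarrow$ density) is the standard one, but Step~1 fails in the generality of the statement.

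Quasi-linearity only gives $|T(g+h)|\le K(|Tg|+|Th|)$. Iterating it over many summands puts a factor $K^{d_j}$ on the $j$-th term, where $d_j$ is that term's depth in the summation tree. So $|Tf|\le K\sum_j a_j|T1_{A_j}|$, or any bound with a constant independent of the number of layers, is available only when $K=1$. In fact the conclusion you draw from it is false for quasi-linear operators. Take $X=\{1,\dots,N\}$ with counting measure, $Y$ a single point of mass one, and $Tf=(\sum_x|f(x)|^{1/2})^2$. This $T$ is quasi-linear with $K=2$, and $\|T1_A\|_{L^{q,\infty}}=\mu(A)^2$, so it is of restricted weak type $(1/2,q)$ and $(1/3,q)$ with constant $1$ for every $q$, uniformly in $N$. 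Yet $f=\sum_{j\le N}j^{-2}1_{\{j\}}$ has $Tf\sim(\log N)^2$ while $\|f\|_{L^{1/2,1}}\sim\log N$.

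There is a second, independent problem. $L^{1,\infty}$ is not normable, so $q_i=1$ is not routine. For $q_i\le 1$, your $r$-convexity argument applied to the layer-cake decomposition only yields $(\sum_j a_j^r\mu(A_j)^{r/p_i})^{1/r}$. With many small increments $a_j$, this exceeds $\|f\|_{L^{p_i,1}}$ by a factor that grows with the number of layers, not by a constant.

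The missing idea is that the endpoint bound should be $\|Tf\|_{L^{q_i,\infty}}\lesssim M_i\|f\|_{L^{p_i,s}}$ for some small $s=s(K,q_i)>0$ (read $\|f\|_{L^\infty}$ when $p_i=\infty$). This suffices because real interpolation does not see the second Lorentz index at the endpoints. One way to get it:

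1. For $0\le f\in S_0(X)$, expand in binary digits: $f=\sum_{-L\le k\le k_0}2^k1_{E_k}+h_L$, with $E_k\subset\{f\ge 2^k\}$ and $h_L$ a combination of the finitely many level sets of $f$ with coefficients $<2^{-L}$.
2. For any finite decomposition $f=\sum_i c_i1_{B_i}$, put $a_i=c_i\mu(B_i)^{1/p_i}$. Fix $0<r\le 1$ with $r<q_i$ and let $s=r/(1+r\log_2K)$.
3. Sum along a binary tree in which the $i$-th term has depth $d_i=\lceil\log_2(\sum_j a_j^s/a_i^s)\rceil$; Kraft's inequality guarantees such a tree exists. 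Then $|Tf|\le\sum_iK^{d_i}c_i|T1_{B_i}|$.
4. The $r$-subadditivity of an equivalent quasi-norm on $L^{q_i,\infty}$ then gives $\|Tf\|_{L^{q_i,\infty}}\lesssim KM_i(\sum_i a_i^s)^{1/s}$.
5. The binary part contributes $\lesssim\|f\|_{L^{p_i,s}}$, and the remainder's contribution tends to $0$ as $L\to\infty$.

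Step~2 should then be run with these endpoint bounds. The cleanest way is to cut $f$ at height $f^*(t^m)$ with $m=(1/q_0-1/q_1)/(1/p_0-1/p_1)$, and apply Hardy-type inequalities to the resulting bound for $(Tf)^*(t)$. This gives $L^{p,\rho}\to L^{q,\rho}$ for every $\rho$, and $p\le q$ enters only through $L^p\subset L^{p,q}$.

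None of this affects the paper. There $T$ is linear and the endpoints are $(p_0,p_0)$, with the paper's $p_0>2$, and $(\infty,\infty)$. So $K=1$ and $q_i>1$, and your Step~1 is valid as written. The condition that actually matters is $K=1$ and $q_i>1$, not $p_i,q_i\ge 1$.
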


\subsection{Incidence Geometry Estimates}
The geometric part of our argument relies on estimates for unions of shaded tubes.
Wang and Wu \cite{WW} established two-ends Furstenberg inequalities in the plane and obtained related estimates in higher dimensions,
drawing in part on the work of Katz and Tao \cite{KT}. We recall the relevant definitions and estimates below.

\begin{definition}
Let $L$ be a family of lines in $\mathbb{R}^{d}$ and let $\delta\in (0,1)$. A shading $Y$: $L\to B^{d}(0,1)$ is an assignment such that $Y(\ell)\subset N_\delta(\ell)\cap B^{d}(0,1)$
is a union of $\delta$-balls in $\mathbb{R}^{d}$ for all $\ell\in L$. We write $(L,Y)_\delta$ to emphasize the dependence on $\delta$.
\end{definition}

\begin{definition}
Let $\delta\in (0,1)$ and let $(L,Y)_\delta$ be a set of lines and shading. We say $Y$ is $\lambda$-dense,
if $|Y(\ell)|\geq \lambda |N_\delta(\ell)\cap B^{d}(0,1)|$.
\end{definition}

\begin{definition}
Let $\delta\in (0,1)$ and let $(L, Y)_\delta$ be a set of lines and shading. Let $0<\varepsilon_2<\varepsilon_1<1$. We say $Y$ is $(\varepsilon_1, \varepsilon_2, C)$-two-ends
if for all $\ell \in L$ and all $\delta\times \delta^{\varepsilon_1}$-tubes
$J\subset N_\delta(\ell)$,
\begin{equation*}
|Y(\ell)\cap J|
\leq C\delta^{\varepsilon_2}|Y(\ell)|.
\end{equation*}
When the constant $C$ is not important in the context, we say $Y$ is $(\varepsilon_1, \varepsilon_2)$-two-ends, or simply two-ends.
\end{definition}

For each line $\ell$ in $\mathbb R^d$, let $V(\ell)$ denote its direction, identified with a point on $S^{d-1}$.
For a family of lines $L$, let $V(L)=\{V(\ell):\ell\in L\}$, counted with multiplicity.

\begin{definition}[{\cite[Definition 1.25]{WW}}]
A family of $\delta$-separated lines $L$ is called $q$-parallel if every $\delta$-ball on $S^{d-1}$ contains at most $q$ points of $V(L)$.
\end{definition}

\begin{proposition}[{\cite[Theorem 2.1]{WW}}]\label{prop:twoends-plane}
Let $\delta\in (0,1)$. Let $(L, Y)_\delta$ be a set of $1$-parallel, $\delta$-separated lines in $\mathbb{R}^{2}$ with an $(\varepsilon_1, \varepsilon_2)$-two-ends, $\lambda$-dense shading. Then for any $\varepsilon>0$,
\begin{equation*}
|\cup_{\ell\in L}Y(\ell)|\geq c_\varepsilon \delta^{\varepsilon}\delta^{\varepsilon_1/2}\lambda^{1/2}\sum_{\ell\in L}|Y(\ell)|.
\end{equation*}
\end{proposition}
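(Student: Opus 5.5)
We would argue by a bush/hairbrush incidence count, with an induction on scales to remove the losses, in the spirit of Wang and Wu \cite{WW} and ultimately of Katz and Tao \cite{KT}. First we pigeonhole. By dyadic pigeonholing in the multiplicity $\mu(z)=\#\{\ell\in L: z\in Y(\ell)\}$, we may pass to a refinement $Y'(\ell)\subset Y(\ell)$ with $\sum_\ell|Y'(\ell)|\gtrsim|\log\delta|^{-1}\sum_\ell|Y(\ell)|$ on which $\mu\sim\mu_0$ is essentially constant. The refined shading is still $\gtrsim\lambda|\log\delta|^{-1}$-dense and still two-ends, up to harmless changes of constants. Then $|\cup Y|\gtrsim\mu_0^{-1}\sum_\ell|Y'(\ell)|$, so it suffices to prove
\begin{equation*}
\mu_0\lesssim_\varepsilon\delta^{-\varepsilon}\delta^{-\varepsilon_1/2}\lambda^{-1/2}.
\end{equation*}

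To bound $\mu_0$ we fix a line $\ell$ and form its hairbrush: the lines $\ell'$ with $Y'(\ell')\cap Y'(\ell)\neq\emptyset$. Counting with multiplicity, there are about $\mu_0\,|Y'(\ell)|/\delta^2\gtrsim\mu_0\lambda\delta^{-1}$ such pairs. We sort the hairbrush lines by angle $\theta\in[\delta,1]$ with $\ell$ and by the position along $\ell$ of the crossing point. The two-ends hypothesis is the key input here. For each $\ell'$, the set $Y'(\ell')$ is not concentrated on any $\delta\times\delta^{\varepsilon_1}$ segment, so a $\gtrsim1$ fraction of $Y'(\ell')$ lies at distance $\gtrsim\delta^{\varepsilon_1}$ from the crossing point with $\ell$. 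There, two hairbrush lines through the same $\delta$-ball of $\ell$ with $\delta$-separated directions (1-parallel) overlap in measure at most $\delta^2/(\delta^{\varepsilon_1}\angle(\ell_1',\ell_2'))$. Lines through different points of $Y'(\ell)$ are also essentially distinct.

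A Córdoba-type $L^2$ count of $\bigl|\bigcup_{\ell'}Y'(\ell')\setminus N_{\delta^{\varepsilon_1}}(\text{crossing point})\bigr|$ then gives a lower bound for the hairbrush union. On the other hand, this union lies in $B(0,1)$ and has multiplicity $\mu_0$ coming from all of $L$. Since the union is also bounded by $1$, the two bounds yield an inequality of the shape $\mu_0^2\lambda\,\delta^{\varepsilon_1}\lesssim\delta^{-\varepsilon}$. This is exactly the square-root gain $\delta^{\varepsilon_1/2}\lambda^{1/2}$ in the statement.

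The main obstacle is that the naive $L^2$ (bilinear) count fails when the hairbrush lines at a given angle cluster near a few positions on $\ell$, and 1-parallelness alone does not prevent this. To handle it we would induct on scales. We cover the configuration by $\rho$-tubes with $\rho=\delta^{\varepsilon}$, and apply the inductive hypothesis at scale $\delta/\rho$ inside each rescaled fat tube, where the lines become $\lesssim1$-parallel after rescaling. At the coarse scale we apply the statement to the fat tubes, noting that the two-ends and density parameters transfer with only $\rho^{O(1)}$ losses. Multiplying the coarse and fine bounds and iterating $O(1/\varepsilon)$ times closes the induction with the $c_\varepsilon\delta^\varepsilon$ loss. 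The delicate point is to check that the two-ends condition survives the rescaling, which is why the statement is only $\varepsilon$-lossy; this is the step carried out in detail in \cite[Theorem 2.1]{WW}.
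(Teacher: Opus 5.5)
Your reduction to bounding the typical multiplicity $\mu_0$ is fine. The gap is the key inequality $\mu_0^2\lambda\delta^{\varepsilon_1}\lesssim\delta^{-\varepsilon}$: you assert it rather than derive it, and the hairbrush count cannot give it.

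That inequality is exactly what you would get if the bristles through $Y'(\ell)$ (about $\mu_0\lambda\delta^{-1}$ of them) had essentially disjoint shadings away from $\ell$. Their union would then have measure $\gtrsim\delta^{\varepsilon_1}\mu_0\lambda^2$, to be compared with $|\bigcup_{\ell\in L}Y(\ell)|\lesssim\mu_0^{-1}\sum_{\ell}|Y(\ell)|$, which is $\lesssim\lambda/\mu_0$ in the critical case $\#L\sim\delta^{-1}$, $|Y(\ell)|\sim\lambda\delta$. Nothing gives this disjointness in $\mathbb{R}^2$. Wolff's hairbrush works in higher dimensions because the bristles split into $2$-planes through $\ell$. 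In the plane there is only one such plane and any two non-parallel bristles meet. Neither two-ends nor $1$-parallelism prevents the bristle shadings from overlapping with multiplicity comparable to $\mu_0$.

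Carried out honestly, your C\'ordoba $L^2$ count on the bristles gives only $|\bigcup_{\ell'}Y'(\ell')|\gtrsim|\log\delta|^{-1}\lambda\sum_{\ell'}|Y'(\ell')|$. That returns C\'ordoba's bound $\mu_0\lesssim\lambda^{-1}$ up to logarithms. The bush argument at a point, using two-ends, returns $\mu_0\lesssim(\#L)^{1/2}\delta^{-\varepsilon_1/2}$, which is Proposition~\ref{prop:twoends-general} with $d=2$. The target $\mu_0\lesssim\lambda^{-1/2}\delta^{-\varepsilon_1/2-\varepsilon}$ beats both by a power of $\delta$: for $\lambda=\delta^{1/2}$ and $\#L=\delta^{-1}$ it is $\delta^{-1/4}$ against $\delta^{-1/2}$.

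This is not a technicality. Suppose a union $E$ of $\delta$-balls contains, for each $\delta$-separated direction, a tube meeting $E$ in a $(\delta,s)$-set. These shadings are $1$-parallel, $\delta^{1-s}$-dense and $(\varepsilon_1,s\varepsilon_1)$-two-ends. The proposition then gives $|E|\gtrsim\delta^{(3-3s)/2+O(\varepsilon+\varepsilon_1)}$, which is the discretized form of Wolff's Furstenberg conjecture $\dim\geq(3s+1)/2$. C\'ordoba and bush counting give precisely Wolff's older bounds $2s$ and $s+\frac12$. The conjecture was settled only recently by Ren and Wang, building on work of Orponen--Shmerkin and Guth--Solomon--Wang.

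The induction on scales does not close this gap. It names no estimate that produces the gain at either scale, and the hypotheses do not pass to the two scales:
\begin{itemize}
\item Two-ends is a condition at the single scale $\delta^{\varepsilon_1}$. It says nothing about the rescaled shadings inside a $\delta^{\varepsilon}$-tube, or about the induced shading of the fat tubes, whose density is also uncontrolled.
\item The fat tubes need not be $1$-parallel, because lines with directions in one $\rho$-cap may lie in many parallel $\rho$-tubes.
\item Multiplying a coarse and a fine bound only propagates whatever power of $\lambda$ is already available at each scale. It cannot upgrade $\lambda^{1}$ to $\lambda^{1/2}$.
\end{itemize}
The paper gives no proof of this proposition; it quotes \cite[Theorem 2.1]{WW}, where, to my understanding, the estimate is obtained from the sharp planar Furstenberg set estimate of Ren and Wang. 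Some input of that depth is unavoidable here, and a correct argument would have to invoke it in place of the hairbrush step.
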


\begin{proposition}[{\cite[Lemma 3.1]{WW}}]\label{prop:twoends-three-d}
Let $\delta\in (0,1)$. Let $(L, Y)_\delta$ be a set of $1$-parallel, $\delta$-separated lines in $\mathbb{R}^{3}$ with an $(\varepsilon_1, \varepsilon_2)$-two-ends, $\lambda$-dense shading. Then for any $\varepsilon>0$,
\begin{equation*}
|\cup_{\ell\in L}Y(\ell)|\geq c_\varepsilon \delta^{\varepsilon}\delta^{3\varepsilon_1/4}\lambda^{3/4}\delta^{1/2}\sum_{\ell\in L}|Y(\ell)|.
\end{equation*}
\end{proposition}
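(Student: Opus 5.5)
The statement just given is a cited result, \cite[Lemma 3.1]{WW}. I would reduce it to the planar two-ends Furstenberg inequality, Proposition \ref{prop:twoends-plane}, by slicing. The factor $\delta^{1/2}$ together with the exponents $3/4$ (in place of $1/2$) suggest a specific mechanism. The two-dimensional estimate is applied inside planar slabs, and it is combined with a Cordoba-type $L^2$ (bush/hairbrush) bound that accounts for the extra transverse dimension.

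\emph{Step 1: pigeonholing.} I would first normalize the configuration. After dyadic pigeonholing, losing only $\delta^{-\varepsilon}$, I assume the following:
\begin{itemize}
\item every shading $Y(\ell)$ has measure comparable to $\lambda\delta^{2}$;
\item the multiplicity function $\mu(x)=\#\{\ell: x\in Y(\ell)\}$ is roughly constant, equal to $\mu$, on a set $E$ carrying most of $\sum_\ell|Y(\ell)|$.
\end{itemize}
Then $|\cup Y(\ell)|\gtrsim \mu^{-1}\sum|Y(\ell)|$, so it suffices to prove
\[
\mu\lesssim \delta^{-\varepsilon}\,\delta^{-3\varepsilon_1/4}\,\lambda^{-3/4}\,\delta^{-1/2}.
\]

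\emph{Step 2: a point of high multiplicity.} Fix $x\in E$ and consider the $\mu$ tubes through $x$. By the two-ends condition, each such tube has most of its shading at distance $\gtrsim\delta^{\varepsilon_1}$ from $x$. Because the family is $1$-parallel, these $\mu$ tubes point in $\delta$-separated directions.

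\emph{Step 3: the hairbrush and planar slabs.} Take a tube $\ell_0$ and the bush of tubes meeting its shading, which forms a hairbrush. I would decompose the hairbrush into $\sim\delta^{-1}$ slabs of thickness $\delta$ (up to $\delta^{-\varepsilon}$ pigeonholing over angle), each containing $\ell_0$. Inside a slab the tubes form a planar configuration. They remain $1$-parallel after rescaling, and their restricted shadings stay two-ends and $\lambda$-dense.
\begin{itemize}
\item Applying Proposition \ref{prop:twoends-plane} in each slab bounds the size of each planar piece of the hairbrush from below.
\item Since the slabs are essentially disjoint away from $\ell_0$, and the two-ends condition lets me discard the part near $\ell_0$, I can sum over slabs.
\end{itemize}
This yields a lower bound on $|\cup Y(\ell)|$ in terms of the number of tubes in the hairbrush. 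The number of tubes meeting $Y(\ell_0)$ is at least $\mu\lambda\delta^{-1}$, divided by the overlap.

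\emph{Step 4: comparing the bounds.} I would compare this with the trivial bound $\sum|Y|/\mu$ and optimize. Combining the hairbrush bound, which gains through $\lambda^{1/2}\delta^{\varepsilon_1/2}$, with the bush bound (roughly $\mu\lambda\delta^{2}\cdot\delta^{\varepsilon_1}$-type estimates) should give, after the geometric mean, the $3/4$ exponents and the factor $\delta^{1/2}$.

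\emph{Main obstacle.} The delicate step is Step 3. One must verify that restricting to slabs preserves the hypotheses of Proposition \ref{prop:twoends-plane}, namely two-ends with comparable $\varepsilon_1,\varepsilon_2$, $1$-parallelism, and density, after pigeonholing. One must also control the overlaps between slabs near the stem.

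Since the statement is exactly \cite[Lemma 3.1]{WW}, in this paper I would simply invoke it as stated rather than reprove it.
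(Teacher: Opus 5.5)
Your final plan, invoking \cite[Lemma 3.1]{WW} as stated, is exactly what the paper does: it gives no proof of this proposition and simply quotes Wang and Wu. As an aside on your heuristic sketch, you would get these exponents by playing the hairbrush bound $|\bigcup_\ell Y(\ell)|\gtrsim \delta^{\varepsilon}\delta^{3\varepsilon_1/2}\mu\lambda^{5/2}\delta$ against $|\bigcup_\ell Y(\ell)|\lesssim \mu^{-1}\sum_\ell |Y(\ell)|\lesssim \mu^{-1}\lambda$, where the last step uses $\#L\lesssim\delta^{-2}$, i.e.\ $1$-parallelism; this gives $\mu\lesssim \delta^{-\varepsilon}\delta^{-3\varepsilon_1/4}\lambda^{-3/4}\delta^{-1/2}$, whereas combining with the bush bound, as your Step 4 suggests, only yields an estimate of the type in Proposition~\ref{prop:twoends-general}, and hairbrush tubes nearly parallel to the stem would need a broad/narrow rescaling step rather than just dyadic pigeonholing of the angle.
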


\begin{proposition}[{\cite[Lemma 3.3]{WW}}]\label{prop:twoends-general}
Let $\delta\in (0,1)$. Let $(L, Y)_\delta$ be a set of $\delta$-separated lines in $\mathbb{R}^{d}$ with an $(\varepsilon_1, \varepsilon_2)$-two-ends, $\lambda$-dense shading. Then for any $\varepsilon>0$,
\begin{equation*}
|\cup_{\ell\in L}Y(\ell)|\geq c_\varepsilon \delta^{\varepsilon}\delta^{\varepsilon_1/2}\lambda \delta^{(d-1)/2}(\delta^{d-1}\#L)^{1/2}.
\end{equation*}
\end{proposition}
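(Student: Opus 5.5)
The plan is a double-counting (Córdoba-type $L^2$) argument on pairs of $\delta$-balls. It uses the two-ends condition only to guarantee that a positive proportion of the pairs of balls on each shaded tube are far apart. Since the statement is \cite[Lemma 3.3]{WW}, we only sketch how it could be recovered.

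\textbf{Discretization.} Discretize $\bigcup_{\ell}Y(\ell)$ into a family $\mathcal B$ of essentially disjoint $\delta$-balls, so that $|\bigcup_\ell Y(\ell)|\sim\delta^d\#\mathcal B$. For each $\ell$ let $\mathcal B(\ell)\subset\mathcal B$ be the balls covering $Y(\ell)$. By $\lambda$-density,
\begin{equation*}
\#\mathcal B(\ell)\gtrsim\lambda\delta^{-1}.
\end{equation*}

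\textbf{Using the two-ends condition.} Cover $N_\delta(\ell)\cap B^d(0,1)$ by $\lesssim\delta^{-\varepsilon_1}$ tubes $J$ of dimensions $\delta\times\delta^{\varepsilon_1}$. Each $J$ carries at most $C\delta^{\varepsilon_2}|Y(\ell)|$ of the shading. Hence, for a fixed ball $B\in\mathcal B(\ell)$, all but a small fraction of the balls of $\mathcal B(\ell)$ lie at distance $\gtrsim\delta^{\varepsilon_1}$ from $B$; indeed a constant multiple of $J$ around $B$ carries only $O(\delta^{\varepsilon_2})|Y(\ell)|$. Consequently each $\ell$ produces
\begin{equation*}
\gtrsim(\lambda\delta^{-1})^2
\end{equation*}
ordered pairs $(B,B')\in\mathcal B(\ell)^2$ with $\operatorname{dist}(B,B')\geq\delta^{\varepsilon_1}$.

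\textbf{Counting from the other side.} Fix $(B,B')$ at distance $r\geq\delta^{\varepsilon_1}$. Every line $\ell$ whose $\delta$-neighbourhood meets both balls has direction in a $\delta/r$-cap and passes through $B$. The number of $\delta$-separated such lines is therefore $\lesssim r^{-(d-1)}\leq\delta^{-(d-1)\varepsilon_1}$. Summing over lines and comparing with the total number of pairs gives
\begin{equation*}
\#L\,\lambda^2\delta^{-2}\lesssim(\#\mathcal B)^2\max_{(B,B')}\#\{\ell:\ell\ \text{meets}\ B,B'\}.
\end{equation*}
Rearranging,
\begin{equation*}
|\bigcup_\ell Y(\ell)|\gtrsim\lambda\,\delta^{d-1}(\#L)^{1/2}\cdot(\text{loss}),
\end{equation*}
which is exactly $\lambda\delta^{(d-1)/2}(\delta^{d-1}\#L)^{1/2}$ up to the loss factor.

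\textbf{Main obstacle: the size of the loss.} The crude count above gives a loss of $\delta^{(d-1)\varepsilon_1/2}$ rather than the stated $\delta^{\varepsilon_1/2}$. This must be repaired, and I would first try the following refinement. Pigeonhole the pairs by dyadic distance $r\in[\delta^{\varepsilon_1},1]$, at the cost of a $\log(1/\delta)$ factor that is absorbed into $\delta^{\varepsilon}$. On each line, the number of pairs at distance $\sim r$ is at most $\#\mathcal B(\ell)\cdot\min(\#\mathcal B(\ell),r/\delta)$. The two-ends hypothesis (iterated at scale $\delta^{\varepsilon_1}$) then forces the dominant scale to carry enough pairs to balance the $r^{-(d-1)}$ multiplicity. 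I expect the honest outcome to be an interpolation between the trivial bound $|\bigcup Y|\geq\max_\ell|Y(\ell)|$ and the pair count, optimized in $r$. This should reduce the loss to a single power $r^{-1/2}\leq\delta^{-\varepsilon_1/2}$, but this bookkeeping step is where the argument must be checked most carefully. If it fails, one can fall back on the weaker exponent, since in the application $\varepsilon_1$ is ultimately sent to $0$.
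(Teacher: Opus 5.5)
Your double count is correct and complete as far as it goes. The two-ends condition gives $\gtrsim(\lambda\delta^{-1})^2$ ordered pairs per line at separation $\geq\delta^{\varepsilon_1}$, and a pair at distance $r$ is shared by $\lesssim r^{-(d-1)}$ lines of $L$. This yields $|\bigcup_\ell Y(\ell)|\gtrsim\delta^{(d-1)\varepsilon_1/2}\lambda\delta^{(d-1)/2}(\delta^{d-1}\#L)^{1/2}$, which is the statement verbatim for $d=2$.

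The genuine gap is the proposed repair to $\delta^{\varepsilon_1/2}$ for $d\geq3$. It cannot be carried out, because for merely $\delta$-separated lines, as in the statement, that exponent is false. Here is a counterexample:
\begin{itemize}
\item Set $\rho=\delta^{\varepsilon_1-\varepsilon_2}$.
\item Let $L$ be a maximal $\delta$-separated family of lines meeting $B^d(0,\rho/2)$, so that $\#L\sim\rho^{d-1}\delta^{-2(d-1)}$.
\item Let $Y(\ell)$ be the union of the $\delta$-balls centred on $\ell\cap B^d(0,\rho)$.
\end{itemize}
Then $\lambda\sim\rho$, and every $\delta\times\delta^{\varepsilon_1}$ tube carries at most a fraction $\lesssim\delta^{\varepsilon_1}/\rho=\delta^{\varepsilon_2}$ of $|Y(\ell)|$, so the shading is two-ends. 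The union satisfies $|\bigcup_\ell Y(\ell)|\lesssim\rho^{d}$, while the right-hand side of the proposition is $\sim c_\varepsilon\delta^{\varepsilon+\varepsilon_1/2}\rho^{(d+1)/2}$. The proposition would therefore force $\delta^{(d-1)(\varepsilon_1-\varepsilon_2)/2}\gtrsim c_\varepsilon\delta^{\varepsilon+\varepsilon_1/2}$. This fails as $\delta\to0$ as soon as $(d-2)\varepsilon_1>(d-1)\varepsilon_2+2\varepsilon$. For $d=3$ that condition is $\varepsilon_2<\varepsilon_1/2-\varepsilon$, which covers the regime $\varepsilon_2=\varepsilon_1^2/16$ of Section~\ref{sec:local-estimate}.

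In this example your pair inequality is an equality up to constants: a typical pair on a line is at distance $\sim\rho$ and is shared by $\sim\rho^{-(d-1)}$ lines. So no dyadic pigeonholing in the distance can help. Two-ends makes pairs at distance $\ll\delta^{\varepsilon_1-\varepsilon_2}$ a negligible fraction, so the best any refinement can give is the loss $\delta^{(d-1)(\varepsilon_1-\varepsilon_2)/2}$, and the example shows this is sharp.

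The correct resolution is therefore your fallback: state and prove the lemma with the loss $\delta^{(d-1)\varepsilon_1/2}$. The paper contains no proof of its own here; it imports the estimate from \cite{WW}. In Step 3 of the proof of Theorem~\ref{thm:local-estimate} it uses the estimate only in the form $\delta^{\eta+C_n\varepsilon_1}\Lambda\delta^{n/2}(\delta^n\#\mathcal L_0)^{1/2}$, with an unspecified $C_n$. It applies it only to the $1$-parallel family $\mathcal L_0$, whereas the example above is far from $1$-parallel. Since $\varepsilon_1=4\varepsilon^2$, this loss is absorbed into $r^{C\varepsilon^2}$. Your crude count therefore supplies exactly what the main theorem needs.

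One normalization you use silently: $\#\mathcal B(\ell)\gtrsim\lambda\delta^{-1}$ requires $|N_\delta(\ell)\cap B^d(0,1)|\sim\delta^{d-1}$, i.e.\ that each line crosses the unit ball in a chord of length $\sim1$. This holds in the paper's application, where $|N_\delta(\widetilde\ell_\iota)|\sim\delta^n$ is checked.
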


The preceding estimate is useful for small values of the shading density. For larger densities,
we use the following inequality of Katz and Tao in the form stated by Wang and Wu \cite[Theorem 3.5]{WW}.

\begin{proposition}[{\cite[Theorem 3.5]{WW}}]\label{prop:twoends-dense}
Let $\delta\in (0,1)$. Let $(L, Y)_\delta$ be a set of $1$-parallel, $\delta$-separated lines in $\mathbb{R}^{d}$ with an $(\varepsilon_1, \varepsilon_2)$-two-ends, $\lambda$-dense shading. Suppose $\lambda \geq \delta^{1/4}$. Then for any $\varepsilon>0$,
\begin{equation*}
|\cup_{\ell\in L}Y(\ell)|\geq c_\varepsilon \delta^{\varepsilon}\delta^{\varepsilon_1}\lambda^{(2d+10)/7}\delta^{(3d-3)/7}\left(\sum_{\ell\in L}|Y(\ell)|\right)^{4/7}.
\end{equation*}
\end{proposition}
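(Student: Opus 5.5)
The plan is to follow the arithmetic (sums--differences) approach of Katz and Tao, as carried out in \cite[Theorem 3.5]{WW}. The union bound comes from an additive-combinatorial inequality applied to three parallel slices of the configuration. The hypothesis $\lambda\ge\delta^{1/4}$ and the two-ends condition are used only to produce these slices with enough incidences. After a rotation and a pigeonholing over a bounded number of directional caps, I would assume all directions $V(\ell)$ make an angle $\lesssim 1$ with $e_d$. Each tube $N_\delta(\ell)\cap B^d(0,1)$ is then transverse to the hyperplanes $\{x_d=c\}$.

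\textbf{Step 1 (three slices).} For each $\ell$, the $(\varepsilon_1,\varepsilon_2)$-two-ends property says $Y(\ell)$ is not concentrated in any $\delta^{\varepsilon_1}$-segment. Together with $\lambda$-density, this gives a set of heights $x_d$ of measure $\gtrsim\lambda$ at which $\ell$ is shaded, not contained in a short interval. I would pigeonhole (losing factors $\delta^{-O(\varepsilon_1)}$ and powers of $\lambda$) a common triple of heights in arithmetic progression, $c, c+h, c+2h$ with $h\gtrsim\delta^{\varepsilon_1}$. I would also pass to a subfamily $L'\subset L$ with $\#L'\gtrsim \delta^{O(\varepsilon_1)}\lambda^{O(1)}\#L$ such that every $\ell\in L'$ is shaded at all three heights.

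The required positive proportion of arithmetic triples should come from a Cauchy--Schwarz/Varnavides-type count on a set of density $\lambda$. This is where $\lambda\ge\delta^{1/4}$ enters: it keeps the losses in $\lambda$ polynomially controlled and compatible with the final exponent $(2d+10)/7$.

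\textbf{Step 2 (discretize).} Let $A$, $C$, $B$ be the $\delta$-discretized shaded sets at heights $c$, $c+h$, $c+2h$, with cardinalities $\lesssim \delta^{-(d-1)}|\cup Y(\ell)|\,\lambda^{-1}$ up to losses. Each $\ell\in L'$ yields a pair $(a,b)\in A\times B$ whose midpoint $(a+b)/2$ lies in $C$. Since the lines are $1$-parallel and $\delta$-separated, distinct lines yield essentially distinct differences $b-a$, because the difference encodes the direction up to scale $h$. So the graph $G=\{(a,b)\}$ satisfies
\[
\#\{a+b:(a,b)\in G\}\lesssim |C|, \qquad \#\{b-a:(a,b)\in G\}\gtrsim\#L'.
\]
A second pair of slices, or the same three slices read differently, controls $\#\{a+2b\}$ in the same way.

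\textbf{Step 3 (Katz--Tao lemma).} I would apply the Katz--Tao sums--differences inequality
\[
\#\{a-b:(a,b)\in G\}\lesssim N^{7/4},
\]
where $N$ bounds $|A|$, $|B|$, $\#(A\stackrel{G}{+}B)$ and $\#(A\stackrel{G}{+}2B)$. This gives $\#L\lesssim \delta^{-O(\varepsilon_1)}\lambda^{-O(1)}N^{7/4}$. Converting $N\sim\delta^{-(d-1)}\lambda^{-1}|\cup Y(\ell)|$, using $\sum_\ell|Y(\ell)|\sim\lambda\delta^{d-1}\#L$, and tracking the powers of $\lambda$, $\delta^{\varepsilon_1}$ and $\delta$ should yield
\[
|\cup Y(\ell)|\gtrsim\delta^{\varepsilon+\varepsilon_1}\lambda^{(2d+10)/7}\delta^{(3d-3)/7}\Big(\sum_\ell|Y(\ell)|\Big)^{4/7}.
\]

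The main obstacle is Step 1: extracting the arithmetic-progression slices with only polynomial losses in $\lambda$ and $\delta^{\varepsilon_1}$, uniformly enough across $\ell$. Here I would first try a dyadic pigeonhole on the height sets together with the Cauchy--Schwarz count of three-term progressions in sets of density $\lambda$. The remaining steps are bookkeeping on top of the cited arithmetic lemma.
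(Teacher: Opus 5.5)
There is a genuine gap, and it is in Step 1. You flag this step as the crux, but it cannot be carried out as described. (The paper gives no proof of this statement; it imports it from \cite[Theorem 3.5]{WW}, so your sketch has to stand on its own.)

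You need a large family $L'$ all of whose lines are shaded at three common heights $c,c+h,c+2h$ with $h\gtrsim\delta^{\varepsilon_1}$. That is, each height set $S_\ell$ must contain a non-degenerate three-term progression. Nothing in the hypotheses forces this. Here is a counterexample configuration:
\begin{itemize}
\item Let $S\subset[0,1]$ be the union of $\delta$-intervals modelled on a Behrend set in $\{1,\dots,\lfloor\delta^{-1}\rfloor\}$, dilated by a large constant. Then $S$ contains no progression with gap $h\gg\delta$, even up to $O(\delta)$ errors.
\item Its density $\mu\approx e^{-C\sqrt{\log(1/\delta)}}$ is far above $\delta^{1/4}$.
\item It should be arrangeable to be $(\varepsilon_1,\varepsilon_2)$-two-ends, since Behrend's set is essentially equidistributed at scale $\delta^{\varepsilon_1}$.
\item Let each $Y(\ell)$ consist of the $\delta$-balls centred on $\ell$ at heights $x_d\in S$.
\end{itemize}
For any family of parallel slicing hyperplanes transverse to the lines, the shaded heights along every line form an affine image of $S$. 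So no line is shaded at three equally spaced slices, and $L'=\varnothing$, even though the proposition applies with $\lambda=\mu$.

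This also explains why your counting cannot work. Cauchy--Schwarz counts configurations of the form $(x,x+h,y,y+h)$, not progressions. Varnavides-type counts are governed by Roth-type density bounds, which by Behrend can be as small as $\lambda^{c\log(1/\lambda)}$; this is useless when $\lambda$ is a power of $\delta$. The two-ends hypothesis and $\lambda\ge\delta^{1/4}$ must enter through a different mechanism for producing collinear incidences on common slices, and the proposal does not supply one.

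Even granting suitable slices, Steps 2--3 do not match the lemma you cite. The $N^{7/4}$ sums--differences bound needs four controlled projections of $G$: $A$, $B$, $\{a+b:(a,b)\in G\}$ and $\{a+2b:(a,b)\in G\}$. Three parallel slices in arithmetic progression control only three, because the points at heights $c,c+h,c+2h$ satisfy $x_{c+2h}=2x_{c+h}-x_c$.
\begin{itemize}
\item With $A$ at height $c$ and $B$ at height $c+2h$, the point $(a+2b)/3$ lies at height $c+4h/3$, which is not one of your slices.
\item Pairing the slices differently only reproduces the same single relation.
\end{itemize}
So ``the same three slices read differently'' does not give the $a+2b$ constraint. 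You would need a fourth common slice, at relative positions $0,\tfrac12,\tfrac23,1$. That pattern again contains a three-term progression, so the obstruction above applies to it as well.

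Finally, the exponent bookkeeping in Step 3 is only asserted. With unspecified losses $\lambda^{-O(1)}$ and $\delta^{-O(\varepsilon_1)}$, you cannot recover the specific power $\lambda^{(2d+10)/7}$, nor the single factor $\delta^{\varepsilon_1}$, in the statement.
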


\section{Reductions}\label{sec:reductions}
We first assume that $f$ is smooth in the periodic variables and Schwartz in the Euclidean variables. The general case follows by density.
We begin with some reductions of the local smoothing problem.
\subsection{Littlewood--Paley Decomposition}
By Littlewood--Paley decomposition and parabolic scaling, it suffices to establish the following estimate for every $R> 1$ and $\varepsilon>0$:
\begin{equation}\label{eq:LP-reduction}
\left\|e^{-it\Delta_{\mathbb{T}_{R^{1/2}}^{m}\times \mathbb{R}^{n}}}f\right\|_{L^{p}(\mathbb{T}_{R^{1/2}}^{m}\times \mathbb{R}^{n}\times [0,R])}
\lesssim R^{(m+2n)/2(1/2-1/p)+\varepsilon}\|f\|_{L^{p}},
\end{equation}
where $f$ satisfies $\supp\widehat f\subset \{(k,\xi)\in R^{-1/2}\mathbb{Z}^{m}\times \mathbb{R}^{n}: |(k,\xi)|\sim 1\}$. We write $\mathbb{T}_{R^{1/2}}^{m}=\mathbb{R}^{m}/(R^{1/2}\mathbb{Z}^{m})$.
\begin{lemma}\label{lem:LP-reduction}
If \eqref{eq:LP-reduction} holds for all $R> 1$, then for any $g\in L^{p}(\mathbb{T}^{m}\times \mathbb{R}^{n})$, obeying $\supp \widehat{g}\subset \{(k,\xi)\in \mathbb{Z}^{m}\times \mathbb{R}^{n}:|(k,\xi)|\sim R^{1/2}\}$, we obtain
\begin{equation}\label{eq:LP-rescaled}
\left\|e^{-it\Delta_{\mathbb{T}^{m}\times \mathbb{R}^{n}}}g\right\|_{L^{p}(\mathbb{T}^{m}\times \mathbb{R}^{n}\times [0,1])}
\lesssim R^{(m+2n)/2(1/2-1/p)-1/p+\varepsilon}\|g\|_{L^{p}}.
\end{equation}
\end{lemma}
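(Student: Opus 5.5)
This is a parabolic rescaling argument. Given $g$ on $\mathbb T^m\times\mathbb R^n$ with $\supp\widehat g\subset\{|(k,\xi)|\sim R^{1/2}\}$, $k\in\mathbb Z^m$, I will define
\begin{equation*}
f(x,y)=g\big(R^{-1/2}x,R^{-1/2}y\big),\qquad (x,y)\in\mathbb T^m_{R^{1/2}}\times\mathbb R^n .
\end{equation*}
This is well defined because $g$ is $\mathbb Z^m$-periodic in $x$, so $f$ is $R^{1/2}\mathbb Z^m$-periodic. Its Fourier coefficients are $\widehat f(k',\xi')=R^{n/2}\widehat g(R^{1/2}k',R^{1/2}\xi')$ (with the paper's normalization $L^{-m}\int_{\mathbb T^m_L}$). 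Here $k'=R^{-1/2}k$ with $k\in\mathbb Z^m$, so $k'\in R^{-1/2}\mathbb Z^m$, and $|(k',\xi')|\sim1$. Thus $f$ satisfies the hypotheses of \eqref{eq:LP-reduction}.

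Next I compare the two evolutions by substituting into the explicit series formula for the propagator. With $k=R^{1/2}k'$ and $\xi=R^{1/2}\xi'$,
\begin{equation*}
e^{-it\Delta_{\mathbb T^m_{R^{1/2}}\times\mathbb R^n}}f(x,y)=\big(e^{-is\Delta_{\mathbb T^m\times\mathbb R^n}}g\big)\big(R^{-1/2}x,R^{-1/2}y\big),\qquad s=t/R.
\end{equation*}
This holds because
\begin{equation*}
k'\cdot x+\xi'\cdot y+t(|k'|^2+|\xi'|^2)=k\cdot R^{-1/2}x+\xi\cdot R^{-1/2}y+R^{-1}t(|k|^2+|\xi|^2),
\end{equation*}
and the Jacobian $R^{-n/2}$ from $\rmd\xi'$ cancels the factor $R^{n/2}$ in $\widehat f$. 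As $t$ ranges over $[0,R]$, $s$ ranges over $[0,1]$.

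Then I compute norms by changing variables $x=R^{1/2}x_0$, $y=R^{1/2}y_0$, $t=Rs$. A fundamental domain of $\mathbb T^m_{R^{1/2}}$ is the dilate of one of $\mathbb T^m$. This gives
\begin{equation*}
\|e^{-it\Delta}f\|_{L^p(\mathbb T^m_{R^{1/2}}\times\mathbb R^n\times[0,R])}=R^{\frac{m+n}{2p}+\frac1p}\|e^{-is\Delta}g\|_{L^p(\mathbb T^m\times\mathbb R^n\times[0,1])}
\end{equation*}
and
\begin{equation*}
\|f\|_{L^p}=R^{\frac{m+n}{2p}}\|g\|_{L^p}.
\end{equation*}
Inserting these into \eqref{eq:LP-reduction} and cancelling $R^{(m+n)/(2p)}$ leaves the extra factor $R^{-1/p}$ from the time dilation. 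This is exactly \eqref{eq:LP-rescaled}.

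There is no real obstacle. The only points needing care are that periodicity is preserved, which is why the torus must be dilated to $\mathbb T^m_{R^{1/2}}$ and the frequencies land in $R^{-1/2}\mathbb Z^m$, and that the Fourier normalization on $\mathbb T_L^m$ is tracked correctly. One can avoid the normalization bookkeeping entirely by checking the propagator identity on each single exponential $e(k\cdot x+\xi\cdot y)$ and then superposing.
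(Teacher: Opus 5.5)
Your proof is correct and follows the same parabolic rescaling as the paper. The only difference is that the paper includes an extra factor $R^{-n/2}$ in $f$, so that $\widehat g(k,\xi)=\widehat f(R^{-1/2}k,R^{-1/2}\xi)$; this is harmless because both sides of \eqref{eq:LP-reduction} are homogeneous of degree one in $f$, so the constant cancels.
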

\begin{proof}
For the function $g$ above, there exists $f\in L^{p}(\mathbb{T}^{m}_{R^{1/2}}\times \mathbb{R}^{n})$ obeying $\supp \widehat{f}\subset \{(k,\xi)\in R^{-1/2}\mathbb{Z}^{m}\times \mathbb{R}^{n}: |(k,\xi)|\sim 1\}$ such that
\begin{equation*}
\widehat{g}(k,\xi)=\widehat{f}(R^{-1/2}k, R^{-1/2}\xi), \qquad g(x,y)=R^{n/2}f(R^{1/2}x, R^{1/2}y).
\end{equation*}
By parabolic scaling, we obtain
\begin{equation*}
e^{-it\Delta_{\mathbb T^m\times\mathbb R^n}}g(x,y)
=R^{n/2}e^{-iRt\Delta_{\mathbb T_{R^{1/2}}^m\times\mathbb R^n}}f(R^{1/2}x,R^{1/2}y).
\end{equation*}
Hence, we have
\begin{equation}\label{eq:LP-scale1}
\|e^{-it\Delta_{\mathbb{T}^{m}\times \mathbb{R}^{n}}}g\|_{L^{p}(\mathbb{T}^{m}\times \mathbb{R}^{n}\times [0,1])}
=R^{n/2-(m+n+2)/(2p)}\|e^{-it\Delta_{\mathbb{T}_{R^{1/2}}^{m}\times \mathbb{R}^{n}}}f\|_{L^{p}(\mathbb{T}_{R^{1/2}}^{m}\times \mathbb{R}^{n}\times [0,R])},
\end{equation}
and
\begin{equation}\label{eq:LP-scale2}
\|g\|_{L^{p}}=R^{n/2-(m+n)/(2p)}\|f\|_{L^{p}}.
\end{equation}
Substituting \eqref{eq:LP-scale1} and \eqref{eq:LP-scale2} into \eqref{eq:LP-reduction}, we obtain \eqref{eq:LP-rescaled}.
\end{proof}

For any $g\in L^{p}(\mathbb{T}^{m}\times \mathbb{R}^{n})$, we can decompose $g$ such that
\begin{equation*}
g=g_0+\sum_{j=1}^{\infty}g_j,
\end{equation*}
where $g$ satisfying $\supp \widehat{g_0}\subset \{(k,\xi)\in \mathbb{Z}^{m}\times \mathbb{R}^{n}:|(k,\xi)|\lesssim 1\}$ and each $g_j$ satisfying $\supp \widehat{g_j}\subset \{(k,\xi)\in \mathbb{Z}^{m}\times \mathbb{R}^{n}: |(k,\xi)|\sim 2^{j}\}$.
For low frequency part $g_0$, by direct computation, we obtain that
\begin{equation*}
\|e^{-it\Delta_{\mathbb{T}^{m}\times \mathbb{R}^{n}}}g_0\|_{L^{p}(\mathbb{T}^{m}\times \mathbb{R}^{n}\times [0,1])}
\lesssim \|g_0\|_{L^{p}}
\sim \|g_0\|_{L^{p}_s}.
\end{equation*}
Define $\sigma=(m+2n)(1/2-1/p)-2/p$. For high frequency part $g_j$, by \eqref{eq:LP-rescaled}, we obtain
\begin{align*}
\|e^{-it\Delta_{\mathbb{T}^{m}\times \mathbb{R}^{n}}}g_j\|_{L^{p}(\mathbb{T}^{m}\times \mathbb{R}^{n}\times [0,1])}&\lesssim 2^{j(\sigma+2\varepsilon)}\|g_j\|_{L^{p}}\\
&=2^{j(\sigma+2\varepsilon-s)}2^{js}\|g_j\|_{L^{p}}\\
&\lesssim 2^{j(\sigma+2\varepsilon-s)}\|g_j\|_{L_s^{p}}.
\end{align*}
Choose $\varepsilon>0$ such that $2\varepsilon<s-\sigma$. Summing over $j$, we obtain
\begin{align*}
\|e^{-it\Delta_{\mathbb{T}^{m}\times \mathbb{R}^{n}}}g\|_{L^{p}(\mathbb{T}^{m}\times \mathbb{R}^{n}\times [0,1])}
&\lesssim \sum_{j=0}^{\infty}\|e^{-it\Delta_{\mathbb{T}^{m}\times \mathbb{R}^{n}}}g_j\|_{L^{p}(\mathbb{T}^{m}\times \mathbb{R}^{n}\times [0,1])}\\
&\lesssim \|g_0\|_{L_s^{p}}+\sum_{j=1}^{\infty} 2^{j(\sigma+2\varepsilon-s)}\|g_j\|_{L_s^{p}}\\
&\lesssim \|g\|_{L_s^{p}}.
\end{align*}
Hence, it suffices to show \eqref{eq:LP-reduction} to prove Theorem \ref{thm:main}.
We slightly enlarge the support of $\widehat{f}$ so that $\supp \widehat{f}\subset \{(k,\xi)\in R^{-1/2}\mathbb{Z}^{m}\times \mathbb{R}^{n}: |k|\lesssim 1, |\xi|\lesssim 1\}$.
To simplify the notation, we set $L=R^{1/2}$. For a function $g$ on $\mathbb{R}^{n}$, we denote $\mathcal{E}_n(t)g(x)=e^{-it\Delta_{\mathbb{R}^{n}}}g(x)$. Similarly, for a function $h$ on $\mathbb{T}_L^{m}\times \mathbb{R}^{n}$,
we denote $\mathcal{E}_{m,n}(t)h(x,y)=e^{-it\Delta_{\mathbb{T}^{m}_L\times \mathbb{R}^{n}}}h(x,y)$. If $t$ is not important, we ignore it.
By the Fourier support condition on $f$, we may always apply a suitable parabolic scaling so that the rescaled function $\tilde{f}$ satisfies
$\supp \widehat{\tilde{f}}\subset \{(k,\xi)\in R^{-1/2}\mathbb{Z}^m\times \mathbb{R}^n :|k|\leq 1, |\xi|\leq 1\}$ without affecting the desired estimate.
Henceforth, assume that $\widehat{f}$ itself satisfies this support condition.

\subsection{Restricted Interpolation}
We next reduce \eqref{eq:LP-reduction} to a mixed $L^2$--$L^\infty$ estimate at $p_0$.

\begin{lemma}\label{lem:interpolation-reduction}
Let $p_0$ be as in Theorem \ref{thm:main}. Suppose that, for every $\varepsilon>0$,
\begin{equation}\label{eq:waveguide-mixed}
\|\mathcal{E}_{m,n}(t)f\|_{L^{p_0}(\mathbb{T}_L^m\times\mathbb{R}^n\times[0,R])}
\lesssim R^{(m+2n)/2(1/2-1/{p_0})+\varepsilon}\|f\|_{L^2}^{2/p_0}
\|f\|_{L^\infty}^{1-2/p_0}
\end{equation}
holds for all $R> 1$ and all $f$ satisfying the Fourier support condition above, where
$L=R^{1/2}$. Then \eqref{eq:LP-reduction} holds for every $p_0<p<\infty$ and $\varepsilon>0$.
\end{lemma}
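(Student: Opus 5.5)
We plan to reach the strong type bound \eqref{eq:LP-reduction} for $p_0<p<\infty$ by interpolating two restricted weak type estimates with Proposition \ref{prop:interpolation}. We apply it to the linear operator $T f=\mathcal{E}_{m,n}(t)P f$. Here $P$ is a fixed smooth Fourier multiplier on $\mathbb{T}_L^m\times\mathbb{R}^n$, adapted to the (slightly enlarged) frequency region $\{|k|\lesssim1,|\xi|\lesssim1\}$, and equal to $1$ on the actual support. Then $Tf=\mathcal{E}_{m,n}f$ for $f$ with the stated Fourier support, so it is enough to bound $T$ on all of $L^p$.

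\emph{First endpoint.} For a measurable set $A$ of finite measure, apply \eqref{eq:waveguide-mixed} to $P1_A$, whose Fourier support obeys the required condition. Since $P$ is convolution with a kernel whose $L^1$ norm is uniformly bounded (a Schwartz function in $y$, periodized in $x$, with $L\geq1$), we have $\|P1_A\|_{L^\infty}\lesssim1$. By Plancherel, $\|P1_A\|_{L^2}\lesssim |A|^{1/2}$. This gives
\[
\|T1_A\|_{L^{p_0,\infty}}\le\|T1_A\|_{L^{p_0}}\lesssim R^{(m+2n)/2(1/2-1/p_0)+\varepsilon}|A|^{1/p_0},
\]
which is restricted weak type $(p_0,p_0)$.

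\emph{Second endpoint.} This is the case $p_1=\infty$. We write $\mathcal{E}_{m,n}(t)P$ as convolution with the kernel
\[
K_t(x,y)=\sum_{k\in L^{-1}\mathbb{Z}^m}\int\chi(k,\xi)\,e(k\cdot x+\xi\cdot y+t(|k|^2+|\xi|^2))\,\rmd\xi .
\]
The Euclidean part is handled by \eqref{eq:kernel-time} and \eqref{eq:kernel-noncritical}, which give $L^1$ norm $\lesssim(1+t)^{n/2}$. For the periodic part we use Poisson summation: the periodic kernel is the periodization over $L\mathbb{Z}^m$ of the Euclidean kernel, with $L^1(\mathbb{T}_L^m)$ norm at most the $L^1(\mathbb{R}^m)$ norm, and this is $\lesssim(1+t)^{m/2}$ by Corollary \ref{cor:kernel}. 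We take $\chi$ to be a tensor product in $(k,\xi)$ so that the two factors split. Hence, for $t\le R$,
\[
\|\mathcal{E}_{m,n}(t)Pf\|_{L^\infty}\lesssim R^{(m+n)/2}\|f\|_{L^\infty},
\]
and therefore $\|Tf\|_{L^\infty(\mathbb{T}_L^m\times\mathbb{R}^n\times[0,R])}\lesssim R^{(m+n)/2}\|f\|_\infty$.

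This endpoint is weaker than the desired $R^{(m+2n)/4}$ at $p=\infty$, and this is the main obstacle: a plain interpolation would lose the sharp exponent. We resolve it by exploiting that the $L^{p_0}$ endpoint carries an $\varepsilon$-room that compounds under interpolation. Concretely, we aim to show that the target exponent is reached on the open range by a refinement. We would first try to improve the $L^\infty$ bound using stationary phase: the kernel at time $t\sim T$ has size $T^{-(m+n)/2}$ on a ball of radius $T$, so the bound $R^{(m+n)/2}$ is in fact what is available. We then check the arithmetic. With $1/p=(1-\theta)/p_0$, the interpolated exponent is
\[
(1-\theta)\Bigl[\tfrac{m+2n}{2}\bigl(\tfrac12-\tfrac1{p_0}\bigr)\Bigr]+\theta\,\tfrac{m+n}{2}.
\]
We compare this with $\tfrac{m+2n}{2}\bigl(\tfrac12-\tfrac1p\bigr)$, whose coefficients are $\tfrac{m+2n}{4}$ in $\theta$ at $\theta=1$. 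Since $\tfrac{m+n}{2}\le\tfrac{m+2n}{4}$ exactly when $m\le 0$, this fails in general. So the plan for the second endpoint must instead be restricted weak type $(\infty,\infty)$ measured in the mixed norm: use $L^\infty$ in space together with the temporal integral over $[0,R]$, i.e.\ an $L^\infty_{x,y}L^{q}_t$-type bound. If that also falls short, we would replace the second endpoint by the trivial $L^2$ bound from Plancherel, $\|Tf\|_{L^2}\lesssim R^{1/2}\|f\|_2$. This requires $p\le q$, which holds. We would then combine it with an interpolation of \eqref{eq:waveguide-mixed} against the $L^\infty$ bound, with the exponent bookkeeping checked last.
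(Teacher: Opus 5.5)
Your first endpoint matches the paper: restricted weak type $(p_0,p_0)$ from \eqref{eq:waveguide-mixed} applied to a smoothed $1_A$, with $L^2$ and $L^\infty$ norms bounded by $|A|^{1/2}$ and $1$. The paper also needs a routine approximation, via $\vartheta(y/j)1_A$ and Fatou's lemma, because \eqref{eq:waveguide-mixed} is assumed only for nice $f$.

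The gap is the $(\infty,\infty)$ endpoint, and the proposal does not close it. You bound the periodic kernel $K_{1,t}(x)=L^{-m}\sum_{k\in L^{-1}\mathbb Z^m}\chi_1(k)e(k\cdot x+t|k|^2)$ by periodizing the Euclidean kernel and applying the triangle inequality. This gives $\|K_{1,t}\|_{L^1(\mathbb T_L^m)}\lesssim(1+t)^{m/2}$, which is $R^{m/2}$ at $t\sim R$, and as you computed this is too weak. Your claim that $R^{(m+n)/2}$ is ``what is available'' is false. Once $t\gg L$, the $\sim(t/L)^m$ overlapping translates cancel, and the triangle inequality discards that cancellation.

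The missing idea is that $\mathbb T_L^m$ has finite volume $L^m$. By Cauchy--Schwarz and Parseval, uniformly in $t$,
\[
\|K_{1,t}\|_{L^1(\mathbb T_L^m)}\le L^{m/2}\|K_{1,t}\|_{L^2(\mathbb T_L^m)}=L^{m/2}\Bigl(L^{-m}\sum_{k}|\chi_1(k)|^2\Bigr)^{1/2}\lesssim L^{m/2}=R^{m/4}.
\]
Together with $\|K_{2,t}\|_{L^1(\mathbb R^n)}\lesssim(1+t)^{n/2}\lesssim R^{n/2}$ from Corollary \ref{cor:kernel}, this gives $\|Tg\|_{L^\infty}\lesssim R^{(m+2n)/4}\|g\|_{L^\infty}$. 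Proposition \ref{prop:interpolation} with $1-\theta=p_0/p$ then yields exactly
\[
\frac{p_0}{p}\cdot\frac{m+2n}{2}\Bigl(\frac12-\frac1{p_0}\Bigr)+\Bigl(1-\frac{p_0}{p}\Bigr)\frac{m+2n}{4}=\frac{m+2n}{2}\Bigl(\frac12-\frac1p\Bigr),
\]
up to the harmless $p_0\varepsilon/p$.

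Your fallback routes cannot repair the endpoint. An $L^2$ endpoint interpolated against $L^{p_0}$ only reaches $2\le p\le p_0$, never $p>p_0$. Switching to a mixed $L^\infty_{x,y}L^q_t$ norm does not address the actual obstruction, which is the spatial $L^1$ norm of the kernel for $t\sim R$ (half of $[0,R]$). It also does not fit the form of Proposition \ref{prop:interpolation}.
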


\begin{proof}
By parabolic scaling, \eqref{eq:waveguide-mixed} also holds when $|k|\lesssim 1$ and $|\xi|\lesssim 1$.
Choose $\chi_1\in C_c^\infty(\mathbb{R}^m)$ and $\chi_2\in C_c^\infty(\mathbb{R}^n)$ such that
$\chi_1=1$ on $B^{m}(0,1)$ with $\supp\chi_1\subset B^{m}(0,2)$,
and $\chi_2=1$ on $B^{n}(0,1)$ with $\supp\chi_2\subset B^{n}(0,2)$.

For a bounded function $g$ on $\mathbb{T}_L^m\times\mathbb{R}^n$, extend $g$ periodically on $\mathbb{T}_L^m$ and define
\begin{equation*}
h_g(x,y)
=\int_{\mathbb{R}^m}\int_{\mathbb{R}^n}\chi_1^\vee(\eta)\chi_2^\vee(\xi) g(x-\eta,y-\xi)\rmd \eta\rmd \xi.
\end{equation*}
Then
\begin{equation*}
\widehat{h_g}(k,\xi)
=\chi_1(k)\chi_2(\xi)\widehat g(k,\xi),
\end{equation*}
and Young's inequality gives
\begin{equation*}
\|h_g\|_{L^2}\lesssim\|g\|_{L^2},
\qquad \|h_g\|_{L^\infty}\lesssim\|g\|_{L^\infty}.
\end{equation*}
Define the linear operator
\begin{equation*}
Tg(x,y,t)=\mathcal{E}_{m,n}(t)h_g(x,y).
\end{equation*}

We first verify the $L^\infty$ bound. Define
\begin{equation*}
K_{1,t}(x)
=\frac{1}{L^m}\sum_{k\in L^{-1}\mathbb{Z}^m}\chi_1(k)e(k\cdot x+t|k|^2).
\end{equation*}
By orthogonality,
\begin{equation*}
\|K_{1,t}\|_{L^2(\mathbb{T}_L^m)}^2
=\frac{1}{L^m}\sum_{k\in L^{-1}\mathbb{Z}^m}|\chi_1(k)|^2\lesssim 1.
\end{equation*}
Hence, by H\"older inequality,
\begin{equation*}
\|K_{1,t}\|_{L^1(\mathbb{T}_L^m)}
\leq L^{m/2} \|K_{1,t}\|_{L^2(\mathbb{T}_L^m)}
\lesssim L^{m/2}.
\end{equation*}
Define $K_{2,t}(y)=\int_{\mathbb{R}^{n}}\chi_{2}(\xi)e(\xi\cdot y+t|\xi|^2)\rmd \xi$.
By Corollary \ref{cor:kernel}, we have
$\|K_{2,t}\|_{L^1(\mathbb{R}^n)}\lesssim (1+|t|)^{n/2}$. Note that
\begin{align*}
Tg(x,y,t)
&=\frac{1}{L^{m}}\sum_{k\in L^{-1}\mathbb{Z}^{m}}\int_{\mathbb{R}^{n}}\chi_1(k)e(|k|^{2}t)\chi_2(\xi)e(|\xi|^{2}t)
\widehat{g}(k,\xi)e(k\cdot x+\xi\cdot y)\rmd \xi\\
&=\left(\chi_1e(|\cdot|^{2}t)\chi_2e(|\cdot|^{2}t)\widehat{g}\right)^\vee(x,y)\\
&=\left(K_{1,t}K_{2,t}\right)\ast g(x,y).
\end{align*}
Young's inequality yields
\begin{align*}
\|Tg\|_{L^\infty(\mathbb{T}_L^m\times\mathbb{R}^n\times [0,R])}
&\leq \|K_{1,t}\|_{L^{1}(\mathbb{T}_L^m)}\|K_{2,t}\|_{L^1(\mathbb{R}^n)}\|g\|_{L^\infty}\\
&\lesssim L^{m/2}(1+R)^{n/2}\|g\|_{L^\infty}\\
&\lesssim R^{(m+2n)/4}\|g\|_{L^\infty}.
\end{align*}
In particular, $T$ is of restricted weak type $(\infty,\infty)$.

Now let $A\subset \mathbb{T}_L^m\times\mathbb{R}^n$ be measurable with $|A|<\infty$.
To use the assumed estimate, choose
$\vartheta\in C_c^\infty(\mathbb{R}^n)$ with
$0\leq\vartheta\leq 1$ and $\vartheta=1$
near the origin.
The functions $h_{\vartheta(y/j)1_A}$ are
smooth in the periodic variables and Schwartz
in the nonperiodic variables, and converge
to $h_{1_A}$ in $L^2(\mathbb{T}_L^m\times\mathbb{R}^n)$.
Their $L^2$ and $L^\infty$ norms are bounded
by $C|A|^{1/2}$ and $C$, respectively.
Applying \eqref{eq:waveguide-mixed} and Fatou's lemma, we obtain
\begin{align*}
\left\|T1_A\right\|_{L^{p_0,\infty}(\mathbb{T}_L^m\times\mathbb{R}^n\times [0,R])}
&\leq \left\|\lim_{j\to \infty}T(\vartheta(y/j)1_A)\right\|_{L^{p_0}(\mathbb{T}_L^m\times\mathbb{R}^n\times [0,R])}\\
&\leq \lim_{j\to \infty} \left\|T(\vartheta(y/j)1_A)\right\|_{L^{p_0}(\mathbb{T}_L^m\times\mathbb{R}^n\times [0,R])}\\
&\lesssim R^{(m+2n)/2(1/2-1/p_0)+\varepsilon}\left\|h_{\vartheta(y/j)1_A}\right\|_{L^{2}}^{2/p_0}
\left\|h_{\vartheta(y/j)1_A}\right\|_{L^{\infty}}^{1-2/p_0}\\
&\lesssim R^{(m+2n)/2(1/2-1/p_0)+\varepsilon}|A|^{1/p_0}.
\end{align*}
Therefore $T$ is of restricted weak type
$(p_0,p_0)$, and the two corresponding constants satisfy
\begin{equation*}
M_0 \lesssim R^{(m+2n)/2(1/2-1/p_0)+\varepsilon},
\qquad M_1\lesssim R^{(m+2n)/4}.
\end{equation*}

For $p_0<p<\infty$, take $0<\theta<1$ such that
\begin{equation*}
\frac1p=\frac{1-\theta}{p_0} +\frac{\theta}{\infty},
\end{equation*}
Since $T$ is linear, Proposition \ref{prop:interpolation} gives
\begin{align*}
\|Tg\|_{L^p(Y)}
&\lesssim M_0^{1-\theta}M_1^\theta \|g\|_{L^p(X)}\\
&\lesssim R^{\frac{m+2n}{2}(\frac12-\frac1p) +\frac{p_0}{p}\varepsilon} \|g\|_{L^p(X)}.
\end{align*}
By density, this proves \eqref{eq:LP-reduction}.
\end{proof}

\subsection{Reduction to a Uniform Euclidean Estimate}
We next reduce \eqref{eq:waveguide-mixed} to a uniform Euclidean extension estimate
by replacing the discrete periodic frequencies with separated smooth bumps.

We identify $f$ with its $L$-periodic extension in the $x$-variables, while its $L^p$ norms are still taken over $\mathbb T_L^m\times\mathbb R^n$.
Let $\rho\in C_c^{\infty}(\mathbb{R}^{m})$ such that
$\supp \rho\subset B^{m}(0,1)$ and $\int_{\mathbb{R}^{m}}\rho(x)\rmd x=1$. Set $\tau\ll 1$ obeying $\tau \ll 1/R$. Define
\begin{equation*}
\rho_\tau=\frac{1}{\tau^{m}}\rho\left(\frac{\cdot}{\tau}\right), \qquad F_\tau(\eta, \xi)=\sum_{k\in L^{-1}\mathbb{Z}^{m}}\widehat{f}_k(\xi)\rho_\tau(\eta-k),
\end{equation*}
where
\begin{equation*}
f_k(y)=\frac{1}{L^{m}}\int_{\mathbb{T}_L^{m}}f(x,y)e(-k\cdot x)\rmd x.
\end{equation*}
We write $F_\tau[f]$ to emphasize the dependence on $f$. Define the extension operator $E_d G(z,t)=\int_{\mathbb{R}^{d}}G(\omega)e(z\cdot \omega+t|\omega|^{2})\rmd \omega$.
By construction, $F_\tau$ has the following basic properties.
\begin{lemma}\label{lem:Ftau-basic}
Let $0<\tau\leq c/R$, where $c>0$ is sufficiently small. Then the sets $k+\tau\supp\rho$, $k\in L^{-1}\mathbb{Z}^m$, are pairwise disjoint. Moreover,
\begin{equation}\label{eq:Ftau-inverse}
F_\tau^\vee(x,y)=\rho^\vee(\tau x)f(x,y),
\end{equation}
and
\begin{equation}\label{eq:Ftau-L2}
\tau^m\|F_\tau\|_{L^2}^2
=\|\rho\|_{L^2}^2\sum_{k\in L^{-1}\mathbb{Z}^m}\|f_k\|_{L^2}^2,
\end{equation}
\begin{equation}\label{eq:Ftau-L2-periodic}
(\tau L)^m\|F_\tau\|_{L^2}^2
=\|\rho\|_{L^2}^2
\|f\|_{L^2}^2,
\end{equation}
\begin{equation}\label{eq:Ftau-Linfty}
\|F_\tau^\vee\|_{L^\infty}
\sim \|f\|_{L^\infty}.
\end{equation}
Furthermore, we have
\begin{equation}\label{eq:Ftau-extension}
E_{m+n}F_\tau(x,y,t)
=\sum_{k\in L^{-1}\mathbb{Z}^m}
E_m[\rho_\tau(\cdot-k)](x,t)
\mathcal{E}_n(t)f_k(y).
\end{equation}
For any finite scalar family $\{b_k\}$ and any $t\in\mathbb{R}$,
\begin{equation}\label{eq:Ftau-orthogonality}
\int_{\mathbb{R}^m}
\left|
\sum_k E_m[\rho_\tau(\cdot-k)](x,t)b_k
\right|^2\mathrm dx
=\frac{1}{\tau^{m}}\|\rho\|_2^2\sum_k|b_k|^2.
\end{equation}
For $|t|\lesssim R$, $|k|\leq 1$, and every $N\geq 0$,
\begin{equation}\label{eq:Ftau-decay}
|E_m[\rho_\tau(\cdot-k)](x,t)|
\lesssim \frac{1}{(1+\tau|x|)^{N}}.
\end{equation}
\end{lemma}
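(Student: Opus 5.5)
Every claim in Lemma \ref{lem:Ftau-basic} comes from direct computation with the definition of $F_\tau$, and I will take them in the order stated.

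\emph{Disjointness.} The points of $L^{-1}\mathbb Z^m$ are $L^{-1}=R^{-1/2}$-separated. Since $\supp\rho\subset B^m(0,1)$, each set $k+\tau\supp\rho$ lies in a ball of radius $\tau\le c/R\ll R^{-1/2}$. Hence these sets are pairwise disjoint.

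\emph{Inverse transform.} From $(\rho_\tau(\cdot-k))^\vee(x)=e(k\cdot x)\rho^\vee(\tau x)$ we get
\begin{equation*}
F_\tau^\vee(x,y)=\rho^\vee(\tau x)\sum_k f_k(y)e(k\cdot x).
\end{equation*}
The sum is the Fourier series of $f$ in the periodic variable, so this gives \eqref{eq:Ftau-inverse}.

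\emph{$L^2$ identities.} Because the supports are disjoint, the terms of $F_\tau$ are orthogonal. Using $\|\rho_\tau\|_2^2=\tau^{-m}\|\rho\|_2^2$ and Plancherel for $\widehat{f}_k$, this yields \eqref{eq:Ftau-L2}. Periodic Plancherel in the normalization of the Notation gives
\begin{equation*}
\|f\|_{L^2(\mathbb T_L^m\times\mathbb R^n)}^2=L^m\sum_k\|f_k\|_2^2,
\end{equation*}
and combining this with \eqref{eq:Ftau-L2} gives \eqref{eq:Ftau-L2-periodic}.

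\emph{$L^\infty$ equivalence.} The upper bound $\lesssim$ in \eqref{eq:Ftau-Linfty} follows from $|\rho^\vee|\le\|\rho\|_1$. For the lower bound, note that $\rho^\vee(0)=\int\rho=1$, so $|\rho^\vee(\tau x)|\ge1/2$ for $|x|\le c'/\tau$. Since $1/\tau\gg R\gg L$, this region contains a full period cell of $f$. Together with the $L$-periodicity of $f$ in $x$, this gives $\|F_\tau^\vee\|_\infty\gtrsim\|f\|_\infty$. This is the only step that is not pure algebra, and it is mild.

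\emph{Extension formula.} For \eqref{eq:Ftau-extension}, write $|(\eta,\xi)|^2=|\eta|^2+|\xi|^2$ and factor the integral for each $k$ by Fubini. This gives $E_m[\rho_\tau(\cdot-k)](x,t)$ times $\int\widehat{f}_k(\xi)e(y\cdot\xi+t|\xi|^2)\rmd\xi=\mathcal E_n(t)f_k(y)$.

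\emph{Orthogonality.} For \eqref{eq:Ftau-orthogonality}, fix $t$. The function $x\mapsto\sum_k b_kE_m[\rho_\tau(\cdot-k)](x,t)$ is the inverse Fourier transform of
\begin{equation*}
\sum_k b_k\rho_\tau(\eta-k)e(t|\eta|^2).
\end{equation*}
By Plancherel, and because the supports of the terms are disjoint and $|e(t|\eta|^2)|=1$, its $L^2$ norm squared is $\sum_k|b_k|^2\|\rho_\tau\|_2^2=\tau^{-m}\|\rho\|_2^2\sum_k|b_k|^2$.

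\emph{Decay.} For \eqref{eq:Ftau-decay}, substitute $\eta=k+\tau v$:
\begin{equation*}
E_m[\rho_\tau(\cdot-k)](x,t)=e(k\cdot x+t|k|^2)\int\rho(v)e\big(\tau(x+2tk)\cdot v+t\tau^2|v|^2\big)\rmd v.
\end{equation*}
Because $|t|\tau^2\lesssim R\tau^2\ll1$, the quadratic term is a harmless smooth perturbation. If $\tau|x+2tk|$ is large, the phase has gradient of size $\sim\tau|x+2tk|$, and Proposition \ref{prop:noncritical} (or integration by parts, as in Corollary \ref{cor:kernel}) gives decay $(1+\tau|x+2tk|)^{-N}$. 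Finally, $\tau|2tk|\lesssim\tau R\ll1$ for $|k|\le1$, so $1+\tau|x+2tk|\sim1+\tau|x|$, which yields \eqref{eq:Ftau-decay}.

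The main point requiring care is this last step: the decay must be uniform in $t$ and $k$, which relies on $\tau\ll1/R$.
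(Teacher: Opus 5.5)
Your proposal is correct and follows the paper's proof essentially step by step: disjointness from the $R^{-1/2}$-separation of $L^{-1}\mathbb Z^m$, Fourier inversion, Plancherel with disjoint supports, the bound $|\rho^\vee(\tau x)|\sim 1$ on a full period cell, and factorization of the quadratic phase. The only cosmetic difference is in \eqref{eq:Ftau-decay}. There the paper quotes \eqref{eq:kernel-cap} with $r=\tau^{-2}$, while you redo the rescaling $\eta=k+\tau v$ by hand. Both arguments then use $\tau|t||k|\lesssim \tau R\lesssim 1$ to replace $1+\tau|x+2tk|$ by $1+\tau|x|$.
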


\begin{proof}
Since $L=R^{1/2}$ and $\tau\leq cR^{-1}$, the supports of $\rho_\tau(\cdot-k)$ are pairwise disjoint when $c$ is sufficiently small. By Fourier inversion,
\begin{equation*}
F_\tau^\vee(x,y)
=\rho^\vee(\tau x)\sum_{k\in L^{-1}\mathbb{Z}^m}e(k\cdot x)f_k(y)
=\rho^\vee(\tau x)f(x,y),
\end{equation*}
which gives \eqref{eq:Ftau-inverse}. By Plancherel's identity and the disjointness of the supports, we obtain \eqref{eq:Ftau-L2}.
By Parseval's identity on $\mathbb{T}_L^m$:
\begin{equation*}
\|F_\tau\|_{L^2}^2
=\tau^{-m}\|\rho\|_2^2\sum_k\|f_k\|_{L^2}^2
=\tau^{-m}L^{-m}\|\rho\|_2^2\|f\|_{L^2}^2,
\end{equation*}
this proves \eqref{eq:Ftau-L2-periodic}.

Since $\rho^\vee(0)=1$ and $\tau L\lesssim R^{-1/2}$, we have $|\rho^\vee(\tau x)|\sim1$ on a fundamental domain of
$\mathbb T_L^m$ for $c$ sufficiently small. Together with \eqref{eq:Ftau-inverse}, this gives \eqref{eq:Ftau-Linfty}.

The identity \eqref{eq:Ftau-extension} follows directly from the definition of $F_\tau$ and the separation of the extension phase. Since the Fourier transform of $E_m[\rho_\tau(\cdot-k)](\cdot,t)$ is
$\rho_\tau(\cdot-k)e(t|\cdot|^2)$, Plancherel identity and the disjoint supports give \eqref{eq:Ftau-orthogonality}.

Finally, applying \eqref{eq:kernel-cap}, we obtain
\begin{equation*}
|E_m[\rho_\tau(\cdot-k)](x,t)|
\lesssim \frac{1}{(1+\tau|x+2tk|)^N}.
\end{equation*}
Since $\tau R\lesssim 1$, $|t|\lesssim R$
and $|k|\leq 1$, we have
$\tau|tk|\lesssim 1$. Consequently,
\begin{equation*}
|E_m[\rho_\tau(\cdot-k)](x,t)|
\lesssim\frac{1}{(1+\tau|x|)^N},
\end{equation*}
which proves \eqref{eq:Ftau-decay}.
\end{proof}

The next lemma relates the norms of $F_\tau^\vee$ and $E_{m+n}F_\tau$ to the corresponding norms on $\mathbb T_L^m\times\mathbb R^n$.
It gives the passage from the periodic variables to Euclidean ones.

\begin{lemma}\label{lem:Ftau-limit}
Let $1\leq p<\infty$ and let $f$ satisfy the above Fourier support condition. Then
\begin{equation}\label{eq:Ftau-Lp-limit}
\lim_{\tau\to 0}(\tau L)^{m/p}\|F_\tau^\vee\|_{L^{p}}
=\|\rho^\vee\|_{L^p} \|f\|_{L^{p}}
\end{equation}
and
\begin{equation}\label{eq:extension-Lp-limit}
\lim_{\tau\to 0}(\tau L)^{m/p}\|E_{m+n}F_\tau\|_{L^{p}(\mathbb{R}^{m+n}\times [0,R])}
=\|\rho^\vee\|_{L^p}\|\mathcal{E}_{m,n}f\|_{L^{p}(\mathbb{T}_L^{m}\times \mathbb{R}^{n}\times [0,R])}.
\end{equation}
\end{lemma}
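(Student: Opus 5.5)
The first identity \eqref{eq:Ftau-Lp-limit} follows from \eqref{eq:Ftau-inverse} by averaging over periods. Since $F_\tau^\vee(x,y)=\rho^\vee(\tau x)f(x,y)$ with $f$ $L$-periodic in $x$, I tile $\mathbb R^m$ by translates $Q+Lj$, $j\in\mathbb Z^m$, of a fundamental domain $Q$. This gives
\begin{equation*}
\|F_\tau^\vee\|_{L^p}^p=\sum_{j\in\mathbb Z^m}\int_{Q}\int_{\mathbb R^n}|\rho^\vee(\tau(x+Lj))|^p|f(x,y)|^p\,\rmd y\,\rmd x .
\end{equation*}
The step is $\tau L\to0$ and $\rho^\vee$ is Schwartz, so I approximate the sum by an integral. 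Precisely, I compare $(\tau L)^m\sum_j|\rho^\vee(\tau x+\tau Lj)|^p$ with $\int_{\mathbb R^m}|\rho^\vee(u)|^p\,\rmd u$. The difference is bounded uniformly in $x\in Q$ by $C\tau L$ times $\int(1+|u|)^{-N}\,\rmd u$, using the mean value theorem on each cell of side $\tau L$ together with the decay of $\nabla|\rho^\vee|^p$. Here $|\rho^\vee|^p$ is Lipschitz with rapid decay for $p\ge1$; at the zeros of $\rho^\vee$ one can use a smooth majorant instead, or uniform continuity plus tail decay, which suffices for a Riemann-sum argument. Dominated convergence then yields $(\tau L)^m\|F_\tau^\vee\|_p^p\to\|\rho^\vee\|_p^p\|f\|_{L^p(\mathbb T_L^m\times\mathbb R^n)}^p$. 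We may assume $f\in L^p$, since it is smooth and Schwartz in $y$.

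For \eqref{eq:extension-Lp-limit} I reduce to the same mechanism via \eqref{eq:Ftau-extension}. By \eqref{eq:kernel-cap}, with $r=\tau^{-2}$, $u=t$ and $\omega_0=k$,
\begin{equation*}
E_m[\rho_\tau(\cdot-k)](x,t)=e(k\cdot x+t|k|^2)\,\tau^{-m}\!\int\rho(v)\,e\big(\tau(x+2tk)\cdot v+\tau^2t|v|^2\big)\tau^m\,\rmd v .
\end{equation*}
This equals $e(k\cdot x+t|k|^2)\,\Phi_\tau(x,t,k)$ with
\begin{equation*}
\Phi_\tau(x,t,k)=\int\rho(v)\,e\big(\tau(x+2tk)\cdot v+\tau^2t|v|^2\big)\rmd v .
\end{equation*}
For $|t|\le R$ and $|k|\le1$ we have $\tau^2|t|\le\tau^2R\to0$ and $\tau|tk|\lesssim\tau R\to0$. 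Hence $\Phi_\tau(x,t,k)=\rho^\vee(\tau x)+O(\tau R)$ uniformly. Substituting this into \eqref{eq:Ftau-extension} gives
\begin{equation*}
E_{m+n}F_\tau(x,y,t)=\rho^\vee(\tau x)\,\mathcal E_{m,n}(t)f(x,y)+\mathcal R_\tau(x,y,t),
\end{equation*}
where
\begin{equation*}
\mathcal R_\tau=\sum_k e(k\cdot x+t|k|^2)\big(\Phi_\tau(x,t,k)-\rho^\vee(\tau x)\big)\,\mathcal E_n(t)f_k(y).
\end{equation*}
The main term is handled exactly as in the first part, with $\mathcal E_{m,n}f$ in place of $f$ since it is again $L$-periodic in $x$. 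Minkowski's inequality in $L^p$ then reduces everything to showing $(\tau L)^{m/p}\|\mathcal R_\tau\|_{L^p(\mathbb R^{m+n}\times[0,R])}\to0$.

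The error term is the main obstacle, because a uniform $O(\tau R)$ bound alone is not integrable in $x$ over $\mathbb R^m$. I refine it to
\begin{equation*}
|\Phi_\tau(x,t,k)-\rho^\vee(\tau x)|\lesssim\tau R\,(1+\tau|x|)^{-N}.
\end{equation*}
To get this, I write the difference as $\int_0^1\frac{d}{ds}$ of the integral with parameters $s\tau tk$ and $s\tau^2t$, which produces integrals of the type $\int\rho(v)\,P(v)\,e(\tau x'\cdot v+\cdots)\,\rmd v$ with $P$ polynomial. The non-stationary phase estimate \eqref{eq:kernel-cap}, or integration by parts in $v$, gives the decay $(1+\tau|x|)^{-N}$, as in \eqref{eq:Ftau-decay}. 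Since $\widehat f$ is supported in finitely many $k$ (about $L^m$ of them), and $\sup_{t\le R}\|\mathcal E_n(t)f_k\|_{L^p(\mathbb R^n)}<\infty$ because $f_k$ is Schwartz, we obtain
\begin{equation*}
\|\mathcal R_\tau\|_{L^p}\lesssim_{f,R}\tau R\,\|(1+\tau|x|)^{-N}\|_{L^p(\mathbb R^m)}\sim\tau R\,\tau^{-m/p}.
\end{equation*}
Thus $(\tau L)^{m/p}\|\mathcal R_\tau\|_p\lesssim_{f,R}\tau R\,L^{m/p}\to0$ as $\tau\to0$ with $R$ fixed, which completes the argument.
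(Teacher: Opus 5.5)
Your proposal is correct and follows essentially the same route as the paper. For \eqref{eq:Ftau-Lp-limit} both arguments periodize over translates of $[0,L)^m$ and use a Riemann-sum argument for $\rho^\vee$. For \eqref{eq:extension-Lp-limit} both write $E_{m+n}F_\tau=\rho^\vee(\tau x)\,\mathcal E_{m,n}f+\mathcal R_\tau$, prove $|\Phi_\tau(x,t,k)-\rho^\vee(\tau x)|\lesssim\tau R(1+\tau|x|)^{-N}$ (your $\Phi_\tau$ is the paper's $A_{k,t}(\tau x)$), and conclude via Minkowski's inequality and $(\tau L)^{m/p}\,\tau R\,\tau^{-m/p}\to0$.

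The only difference is in that error bound. You differentiate in an interpolation parameter $s$ and apply non-stationary phase to each slice. The paper instead integrates by parts with $(1-\Delta)^N$ after bounding every derivative of $\rho\,(e(\phi_{k,t})-1)$ by $O(\tau R)$. Both give the same estimate.
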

\begin{proof}
Since $f$ is $L$-periodic on $\mathbb{R}^{m}$, by \eqref{eq:Ftau-inverse}, we have
\begin{equation}\label{eq:Ftau-limit1}
\begin{aligned}
\|F_\tau^{\vee}\|_{L^{p}}^{p}
&=\int_{\mathbb{R}^{m}}\int_{\mathbb{R}^{n}}|f(x,y)|^{p}|\rho^\vee(\tau x)|^{p}\rmd y\rmd x\\
&=\sum_{l\in \mathbb{Z}^{m}}\int_{[0,L)^{m}+Ll}\int_{\mathbb{R}^{n}}|f(x,y)|^{p}|\rho^\vee(\tau x)|^{p}\rmd y\rmd x\\
&=\int_{[0,L)^{m}}\int_{\mathbb{R}^{n}}|f(x,y)|^{p}\sum_{l\in \mathbb{Z}^{m}}|\rho^\vee(\tau x+\tau Ll)|^{p}\rmd y\rmd x.
\end{aligned}
\end{equation}
Noting that $\rho^{\vee}$ is Schwartz, due to Riemann's sum, it yields that
\begin{equation}\label{eq:Ftau-limit2}
(\tau L)^{m}\sum_{l\in \mathbb{Z}^{m}}|\rho^{\vee}(\tau x+\tau Ll)|^{p}
\to \|\rho^{\vee}\|_{L^{p}}^{p}, \qquad \tau \to 0,
\end{equation}
uniformly for $x\in [0,L)^{m}$. Combining \eqref{eq:Ftau-limit1} and \eqref{eq:Ftau-limit2}, we obtain \eqref{eq:Ftau-Lp-limit}.

Denote $S_k(y,t)=\int_{\mathbb{R}^{n}}\widehat{f}(k,\xi)e(\xi\cdot y+t(|k|^{2}+|\xi|^{2}))\rmd \xi$. We can rewrite that
\begin{equation}\label{eq:extension-limit1}
\mathcal{E}_{m,n}(t)f(x,y)=\sum_{k\in L^{-1}\mathbb{Z}^{m}}e(k\cdot x)S_k(y,t).
\end{equation}
Making a change of variables $\eta\to k+\tau s$, we obtain
\begin{equation}\label{eq:extension-limit2}
\begin{aligned}
\mathcal{E}_{m+n}(t)F_\tau^{\vee}(x,y)
&=\int_{\mathbb{R}^{m}}\int_{\mathbb{R}^{n}}F_\tau(\eta,\xi)e(\eta\cdot x+\xi \cdot y+t(|\eta|^{2}+|\xi|^{2})) \rmd \xi \rmd \eta\\
&=\sum_{k\in L^{-1}\mathbb{Z}^{m}}e(k\cdot x)S_k(y,t)A_{k,t}(\tau x),
\end{aligned}
\end{equation}
where $A_{k,t}(z)=\int_{\mathbb{R}^{m}}\rho(s)e(s\cdot z+2\tau t k\cdot s+\tau^{2}t|s|^{2})\rmd s$.

Let $\phi_{k,t}(s)=2\tau t k+\tau^{2}t|s|^{2}$.
Recalling that $0\leq t\leq R$, $|k|\leq 1$ and $\tau R<1$, when $s\in \supp \rho$, it gives that
\begin{equation}\label{eq:extension-phase1}
|e(\phi_{k,t}(s))-1|\leq |\phi_{k,t}(s)|\lesssim \tau R.
\end{equation}
Calculating directly $\nabla \phi_{k,t}(s)=2\tau t k+2\tau^{2}t s$, we have $|\nabla \phi_{k,t}(s)|\lesssim \tau R$. Since
\begin{equation*}
D^{e_i+e_j}\phi_{k,t}(s)=\begin{cases}
2\tau^{2}t, & i=j,\\
0, & i\neq j,
\end{cases}
\end{equation*}
we obtain that $|D^{e_i+e_j}\phi_{k,t}(s)|\lesssim \tau^{2}R\leq \tau R$. Note that the derivatives of $\phi_{k,t}$ of order higher than two is $0$.
By the chain's rule, for any multi-index $\beta$ with $|\beta|\geq 1$, $D^{\beta}e(\phi_{k,t}(s))$ is a finite sum of terms,
each consisting of $e(\phi_{k,t}(s))$ multiplied by a product of first or second order derivatives of $\phi_{k,t}$.
Thus, combining \eqref{eq:extension-phase1},
\begin{equation}
|D^{\alpha}(e(\phi_{k,t}(s))-1)|\lesssim \tau R,
\end{equation}
where $\alpha$ is multi-index obeying $|\alpha|\geq 0$. By Leibniz's formula, it gives that
\begin{equation}\label{eq:extension-phase2}
\|D^{\alpha}(\rho(e(\phi_{k,t}(s))-1))\|_{L^{1}}\lesssim \tau R.
\end{equation}
Since for any  integer $N\geq 1$,
\begin{equation*}
(1+4\pi^{2}|z|^{2})^{N}e(s\cdot z)=(1-\Delta)^{N}e(s\cdot z).
\end{equation*}
By integration by parts and \eqref{eq:extension-phase2}, it yields that
\begin{equation}\label{eq:extension-tail}
\begin{aligned}
\left|A_{k,t}(z)-\rho^{\vee}(z)\right|
&=\left|\int_{\mathbb{R}^{m}}\rho(s)e(s\cdot z)\left(e(\phi_{k,t}(s))-1\right)\rmd s\right|\\
&=\frac{1}{(1+4\pi^{2}|z|^{2})^{N}}\left|\int_{\mathbb{R}^{m}}\rho(s)\left(e(\phi_{k,t}(s))-1\right)(1-\Delta)^{N}e(s\cdot z)\rmd s\right|\\
&=\frac{1}{(1+4\pi^{2}|z|^{2})^{N}}\left|\int_{\mathbb{R}^{m}}e(s\cdot z)(1-\Delta)^{N}\left(\rho(s)\left(e(\phi_{k,t}(s))-1\right)\right)\rmd s\right|\\
&\lesssim \frac{\tau R}{(1+|z|^{2})^{N}}.
\end{aligned}
\end{equation}
By \eqref{eq:extension-limit1}, \eqref{eq:extension-limit2}, \eqref{eq:extension-tail} and Minkowski's inequality, choosing $N>m/(2p)$, we have
\begin{align*}
&\left\|\mathcal{E}_{m+n}F_\tau^{\vee}-\rho^{\vee}(\tau x)\mathcal{E}_{m,n}f\right\|_{L^{p}(\mathbb{R}^{m+n}\times [0,R])}\\
&=\left(\int_{0}^{R}\int_{\mathbb{R}^{m+n}}\left|\sum_{k\in L^{-1}\mathbb{Z}^{m}}e(k\cdot x)S_k(y,t)\left(A_{k,t}(\tau x)-\rho^{\vee}(\tau x)\right)\right|^{p}\rmd x\rmd y\rmd t\right)^{1/p}\\
&\lesssim\tau R\sum_{k\in L^{-1}\mathbb{Z}^m}\|S_k\|_{L^p(\mathbb{R}^n\times[0,R])}\left(\int_{\mathbb{R}^m}\frac{\rmd x}{(1+|\tau x|^2)^{Np}}\right)^{1/p}\\
&\lesssim\frac{\tau R}{\tau^{m/p}}\sum_{k\in L^{-1}\mathbb{Z}^m}\|S_k\|_{L^p(\mathbb{R}^n\times[0,R])}.
\end{align*}
Thus,
\begin{equation}\label{eq:extension-limit3}
\begin{aligned}
(\tau L)^{m/p}\left\|\mathcal{E}_{m+n}F_\tau^{\vee}-\rho^{\vee}(\tau x)\mathcal{E}_{m,n}f\right\|_{L^{p}(\mathbb{R}^{m+n}\times [0,R])}
&\lesssim \tau L^{m/p}R\sum_{k\in L^{-1}\mathbb{Z}^{m}}\|S_k\|_{L^{p}(\mathbb{R}^{n}\times [0,R])}\\
&\to 0, \qquad \tau \to 0.
\end{aligned}
\end{equation}
The argument for \eqref{eq:Ftau-limit2} also gives
\begin{equation}\label{eq:extension-limit4}
\lim_{\tau\to 0}(\tau L)^{m/p}\left\|\rho^{\vee}(\tau x)\mathcal{E}_{m,n}f\right\|_{L^{p}(\mathbb{R}^{m+n}\times [0,R])}
=\left\|\rho^{\vee}\right\|_{L^{p}}\|\mathcal{E}_{m,n}f\|_{L^{p}(\mathbb{T}_L^{m}\times \mathbb{R}^{n}\times [0,R])}.
\end{equation}
By the reverse triangle inequality,
\begin{align*}
&\left|\|E_{m+n}F_\tau\|_{L^p(\mathbb{R}^{m+n}\times[0,R])}-\|\rho^\vee(\tau x)\mathcal{E}_{m,n}(t)f\|_{L^p(\mathbb{R}^{m+n}\times[0,R])}\right|\\
&\quad\leq\|E_{m+n}F_\tau-\rho^\vee(\tau x)\mathcal{E}_{m,n}(t)f\|_{L^p(\mathbb{R}^{m+n}\times[0,R])}.
\end{align*}
Multiplying both sides by $(\tau L)^{m/p}$, and applying \eqref{eq:extension-limit3} and \eqref{eq:extension-limit4},
we obtain \eqref{eq:extension-Lp-limit}.
\end{proof}

By Lemma \ref{lem:Ftau-limit} and \eqref{eq:Ftau-Linfty}, to prove
\eqref{eq:LP-reduction}, it suffices to show that
\begin{equation}\label{eq:Euclidean-mixed}
\begin{aligned}
\|E_{m+n}F_\tau\|_{L^{p_0}(\mathbb{R}^{m+n}\times[0,R])}
&\lesssim R^{(m+2n)/2(1/2-1/p_0)+\varepsilon} \|F_\tau\|_{L^2}^{2/p_0}\left\|F_\tau^{\vee}\right\|_{L^\infty}^{1-2/p_0}\\
&\lesssim R^{(m+2n)/2(1/2-1/p_0)+\varepsilon} \|F_\tau\|_{L^2}^{2/p_0}\|f\|_{L^\infty}^{1-2/p_0}
\end{aligned}
\end{equation}
holds for every $R> 1$ and $0<\tau\leq c/R$.

\subsection{Reduction in Time}

We decompose $[0,R]$ into
\begin{equation*}
[0,R]=[0,1]\cup
\bigcup_{j\geq 0, 2^j<R} [2^j,\min\{2^{j+1},R\}].
\end{equation*}
Writing $T=2^j$, we consider balls $B_r(y_0,a)\subset\mathbb{R}^{n+1}$ with $r\sim T$ and $a\sim T$,
chosen so that $t\sim T$ on their fixed enlargements.

\begin{lemma}\label{lem:time-reduction}
For every $\varepsilon>0$, suppose that
\begin{equation}\label{eq:local-reduction}
\int_{\mathbb{R}^{m}\times B_r(y_0,a)}|E_{m+n}F_\tau(x,y,t)|^{p_0}\rmd x\rmd y\rmd t
\lesssim R^\varepsilon (L^mT^n)^{p_0/2-1}\|F_\tau\|_{L^2}^2
\|f\|_{L^\infty}^{p_0-2}
\end{equation}
holds for every $R> 1$, $0<\tau\leq cR^{-1}$, every dyadic $T\leq R$,
every ball $B_r(y_0,a)$ as above, and every $f$ satisfying the Fourier support condition above.
Then, for every $1\leq T\leq R$,
\begin{equation}\label{eq:time-interval}
\int_T^{2T}\int_{\mathbb{R}^{m+n}}|E_{m+n}F_\tau(x,y,t)|^{p_0}\rmd x\rmd y\rmd t
\lesssim R^\varepsilon (L^mT^n)^{p_0/2-1}
\|F_\tau\|_{L^2}^2\|f\|_{L^\infty}^{p_0-2}.
\end{equation}
Consequently, \eqref{eq:Euclidean-mixed} holds.
\end{lemma}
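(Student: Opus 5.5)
The plan has two stages. First, cover the slab $\mathbb R^{n}\times[T,2T]$ in the $(y,t)$ variables by balls $B_r(y_0,a)$ and sum \eqref{eq:local-reduction}; this gives \eqref{eq:time-interval} once we know the data can be localized to each ball at no cost. Second, sum \eqref{eq:time-interval} over dyadic $T$ and treat $[0,1]$ separately to obtain \eqref{eq:Euclidean-mixed}.

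\textbf{Step 1 (localizing the data).} Fix dyadic $T$ with $1\le T\le R$, and cover $\mathbb R^n\times[T,2T]$ by a finitely overlapping family of balls $B_r(y_0,a)$ with $r\sim T$, $a\sim T$, centers $y_0\in T\mathbb Z^n$. For $t\sim T$, the $y$-kernel of $E_{m+n}$ at frequency $|\xi|\le1$ is $K_t$ from Corollary \ref{cor:kernel}. By \eqref{eq:kernel-time} it is essentially supported in $|y|\lesssim T$, with tails $\lesssim (1+|y|/T)^{-N}T^{-n/2}$. Decompose $f=\sum_{y_1\in T\mathbb Z^n}f\,\psi_{y_1}$, where $\{\psi_{y_1}\}$ is a smooth partition of unity in $y$ adapted to cubes of side $T$. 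To preserve the Fourier support hypothesis, replace $\psi_{y_1}$ by a function whose Fourier transform is supported in a $T^{-1}$-ball; after the parabolic rescaling allowed earlier, the enlarged support $|\xi|\lesssim1$ is harmless. On $\mathbb R^m\times B_r(y_0,a)$, the pieces with $|y_1-y_0|\le CT$ contribute the main term, and the remaining pieces contribute $O(R^{-N})$ relative to the right-hand side after using the kernel decay. Apply \eqref{eq:local-reduction} to the main piece $f_{y_0}=\sum_{|y_1-y_0|\le CT}f\psi_{y_1}$, whose associated $F_\tau[f_{y_0}]$ satisfies $\|f_{y_0}\|_{L^\infty}\lesssim\|f\|_{L^\infty}$. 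Summing over $y_0$ gives
\begin{equation*}
\sum_{y_0}\|F_\tau[f_{y_0}]\|_{L^2}^2\lesssim\|F_\tau\|_{L^2}^2 ,
\end{equation*}
by \eqref{eq:Ftau-L2} and the almost orthogonality $\sum_{y_0}\|f_{k}\psi_{y_0}\|_{L^2}^2\sim\|f_k\|_{L^2}^2$. This yields \eqref{eq:time-interval}.

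The tail terms need care because the $x$-directions are not localized. Here I would use the exact identity \eqref{eq:Ftau-extension}, which factorizes the extension into an $x$-factor $E_m[\rho_\tau(\cdot-k)]$ and the Euclidean propagator in $y$. Then \eqref{eq:Ftau-orthogonality} and \eqref{eq:Ftau-decay} control the $x$-integral. For each fixed $x$, the decay of $K_t$ in $y$ reduces the far pieces to a weighted sum, and we bound $\|\cdot\|_{L^{p_0}}^{p_0}\le\|\cdot\|_{L^\infty}^{p_0-2}\|\cdot\|_{L^2}^2$, estimating the $L^2$ part by Plancherel and the $L^\infty$ part crudely by $R^{O(1)}\|f\|_{L^\infty}$. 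The factor $(1+d/T)^{-N}$ beats any polynomial loss.

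\textbf{Step 2 (summing in time).} With $T=2^j$ we have $(L^mT^n)^{p_0/2-1}\le(L^mR^n)^{p_0/2-1}=R^{(m+2n)(p_0/2-1)/2}$. Since there are only $O(\log R)$ dyadic values of $T$, summing \eqref{eq:time-interval} over $j$ gives the $[1,R]$ part of \eqref{eq:Euclidean-mixed} after taking $p_0$-th roots:
\begin{equation*}
(R^{(m+2n)(p_0/2-1)/2})^{1/p_0}=R^{(m+2n)(1/2-1/p_0)/2}.
\end{equation*}
The interval $[0,1]$ is handled directly, with no geometry needed. Interpolate the $L^2$ bound from \eqref{eq:Ftau-orthogonality} with Plancherel in $y$, namely $\|E_{m+n}F_\tau(t)\|_{L^2}=\|F_\tau\|_{L^2}$, against the $L^\infty$ bound. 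The latter follows as in Lemma \ref{lem:interpolation-reduction}: the kernel $K_{2,t}$ has bounded $L^1$ norm for $|t|\le1$, and the periodic part is controlled via \eqref{eq:Ftau-inverse} with a loss of at most $L^{m/2}$. Together these give $\|E_{m+n}F_\tau\|_{L^\infty}\lesssim L^{m/2}\|f\|_{L^\infty}$, which is within the allowed power. The final inequality in \eqref{eq:Euclidean-mixed} is then \eqref{eq:Ftau-Linfty}.

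The main obstacle is Step 1: localizing $f$ in $y$ while simultaneously preserving the Fourier support condition, the structure of $F_\tau$, and the $L^\infty$ bound, and showing that the tails are negligible uniformly in $\tau$. The cleanest route is to localize through the factorization \eqref{eq:Ftau-extension}, acting only on the Euclidean factors $f_k$.
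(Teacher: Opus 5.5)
\textbf{There is a genuine gap in Step 1, at the point you yourself flag as the main obstacle.}

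You choose $\widehat{\psi_{y_1}}$ supported in a $T^{-1}$-ball. Then $\psi_{y_1}$ is localized only at scale $T$: it decays like $(1+\operatorname{dist}(z,Q_{y_1})/T)^{-N}$. Consequently, on a $CT$-neighbourhood of $y_0$, the far part
\[
f-f_{y_0}=f\sum_{|y_1-y_0|>CT}\psi_{y_1}
\]
can be a fixed fraction, of size $\sim C^{n-N}$, of $f$. It is not $O(R^{-N})$.

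The kernel bound \eqref{eq:kernel-time} you invoke has the same defect: it also decays only at scale $T$. For $d\gtrsim CT$ the factor $(1+d/T)^{-N}$ is $O(1)$, so it does not beat any polynomial loss. It certainly does not absorb your crude $R^{O(1)}$ bound for the $L^\infty$ part.

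So on $\mathbb R^m\times B_r(y_0,a)$ the far pieces contribute a constant fraction of a local-smoothing quantity. You cannot control this without the very estimate being proved: the trivial $L^2$--$L^\infty$ bound loses a full factor $T$.

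\textbf{What the paper does instead.} It makes the spatial cut exact and restores the frequency support at unit scale.
\begin{itemize}
\item It takes $q_j$ compactly supported and equal to $1$ on the $CT$-neighbourhood of $Q_j$, and sets $f_j=\chi^\vee*(q_jf)$ with a fixed $\chi$ equal to $1$ on $B^n(0,1)$.
\item Since $\chi\equiv1$ on $\supp\widehat{f_k}$, one has exactly $\mathcal E_n(t)\bigl(f_k-(f_j)_k\bigr)=K_t*\bigl((1-q_j)f_k\bigr)$. For $y\in Q_j$ this only involves $|y-z|\ge CT$.
\item In that region the non-stationary phase bound \eqref{eq:kernel-noncritical}, which decays at unit scale rather than scale $T$, gives $\int_{|z|\ge CT}|K_t|\lesssim T^{-N}$.
\item The paper combines this with the sharp $x$-bound from \eqref{eq:Ftau-orthogonality} and \eqref{eq:Ftau-decay}, which gives $L^{m(p_0/2-1)}$ rather than $R^{O(1)}$. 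It then uses Young's inequality in $y$ and $\sum_k|f_k|^2\le\|f\|_{L^\infty}^2$.
\item With these ingredients, $T^{-N}$ decay suffices for every $T\ge1$; no $R^{-N}$ smallness is needed.
\end{itemize}

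\textbf{How your decomposition could be repaired.} Take $\psi_{y_1}=1_{Q_{y_1}}*\phi$ with $\phi\ge0$ and $\widehat\phi$ supported in a unit ball, so the tails decay at unit scale. Use \eqref{eq:kernel-noncritical} in place of \eqref{eq:kernel-time}. You must also use a sharp $L^\infty$ bound for the tail, or treat $T\le R^{\varepsilon}$ trivially. Some such unit-scale localization is indispensable.

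Your summation over dyadic $T$ is fine. Your treatment of $[0,1]$, an $L^\infty$ bound of $L^{m/2}\|f\|_{L^\infty}$ interpolated with $L^2$ unitarity, is also essentially fine.
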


\begin{proof}
Choose $\chi\in C_c^\infty(\mathbb{R}^n)$ such that $\chi=1$ on $B^n(0,1)$ with $\supp\chi\subset B^n(0,2)$.
Let $K_t(y)=\int_{\mathbb{R}^n}\chi(\xi)e(y\cdot\xi+t|\xi|^2)\rmd\xi$.
For $t\in[0,1]$, by Corollary \ref{cor:kernel}, we have $\|K_t\|_{L^1}\lesssim 1$.
Since $\mathcal{E}_n(t)f_k=K_t*f_k$,
Minkowski's inequality and Parseval's identity give
\begin{align*}
\left(\sum_k|\mathcal{E}_n(t)f_k(y)|^2\right)^{1/2}
&=\left(\sum_k\left|\int_{\mathbb{R}^{n}}K_t(y-z)f_k(z)\rmd z\right|^{2}\right)^{1/2}\\
&\leq\int_{\mathbb{R}^n}|K_t(y-z)|\left(\sum_k|f_k(z)|^2\right)^{1/2}\rmd z\\
&=\int_{\mathbb{R}^{n}}|K_t(y-z)|\left(\frac{1}{L^{m}}\int_{\mathbb{T}_L^m}|f(x,z)|^2\rmd x\right)^{1/2}\rmd z\\
&\leq \|f\|_{L^{\infty}}\int_{\mathbb{R}^n}|K_t(y-z)|\rmd z\\
&\lesssim \|f\|_{L^\infty}.
\end{align*}
As $\#\{k:|k|\leq1\}\lesssim L^m$,
by \eqref{eq:Ftau-extension}, \eqref{eq:Ftau-decay} and Cauchy--Schwarz inequality, we have
\begin{align*}
\left|E_{m+n}F_\tau(x,y,t)\right|
&=\left|\sum_{k}E_m[\rho_\tau(\cdot-k)](x,t)\mathcal{E}_n(t)f_k(y)\right|\\
&\leq \left(\sum_{k}|E_m[\rho_\tau(\cdot-k)](x,t)|^2\right)^{1/2}\left(\sum_{k}|\mathcal{E}_n(t)f_k(y)|^2\right)^{1/2}\\
&\lesssim L^{m/2}\|f\|_{L^\infty}.
\end{align*}
Thus,
\begin{equation*}
\left\|E_{m+n}F_\tau\right\|_{L^\infty(\mathbb{R}^{m+n}\times[0,1])}
\lesssim L^{m/2}\|f\|_{L^\infty}.
\end{equation*}
Together with spatial $L^2$ unitarity, we obtain
\begin{equation}\label{eq:bounded-time}
\begin{aligned}
&\int_0^1\int_{\mathbb{R}^{m+n}}|E_{m+n}F_\tau(x,y,t)|^{p_0} \rmd x\rmd y\rmd t\\
&\qquad=\int_0^1\int_{\mathbb{R}^{m+n}}|E_{m+n}F_\tau(x,y,t)|^{2}|E_{m+n}F_\tau(x,y,t)|^{p_0-2} \rmd x\rmd y\rmd t\\
&\qquad\lesssim L^{m(p_0/2-1)}\|F_\tau\|_{L^2}^2 \|f\|_{L^\infty}^{p_0-2}.
\end{aligned}
\end{equation}

Now let $T\geq1$ be dyadic. Partition $\mathbb{R}^n$ into disjoint cubes $Q_j$ of side length $T$.
Choose $q_j\in C_c^\infty(\mathbb{R}^n)$,
$0\leq q_j\leq1$, satisfying $q_j(y)=1$ if $\operatorname{d}(y,Q_j)\leq CT$,
and $\supp q_j\subset \{y:\operatorname{dist}(y,Q_j)\leq2CT\}$, where $C$ is sufficiently large.
These supports have bounded overlap, so
$\sum_jq_j(y)^2\lesssim 1$.
Set $f_j=\chi^\vee\ast (q_jf)$.
By \eqref{eq:Ftau-L2}, we have
\begin{equation*}
\begin{aligned}
\sum_j\|F_\tau[f_j]\|_{L^2}^2
&=\frac{\|\rho\|_{L^2}^2}{\tau^m} \sum_{j,k}\|\chi^\vee*(q_jf_k)\|_{L^2}^2\\
&\lesssim\frac{\|\rho\|_{L^2}^2}{\tau^m}\sum_k\int_{\mathbb{R}^n} \sum_jq_j(y)^2|f_k(y)|^2\rmd y\\
&\lesssim\|F_\tau[f]\|_{L^2}^2.
\end{aligned}
\end{equation*}
Moreover, Young's inequality gives
\begin{equation*}
\sup_j\|f_j\|_{L^\infty}
\lesssim\|f\|_{L^\infty}.
\end{equation*}

We first estimate $f_j$ on $Q_j\times[T,2T]$. By a fixed parabolic rescaling,
retaining the same notation for the parameters, we may assume that
\begin{equation*}
\supp\widehat{f_j}\subset\{(k,\xi)\in L^{-1}\mathbb{Z}^m \times\mathbb{R}^n:
|k|\leq1,\ |\xi|\leq1\}.
\end{equation*}
Each $Q_j\times[T,2T]$ can be covered
by a bounded number of balls
$B_r(y_0,a)$ with $r\sim T$ and
$a\sim T$, chosen so that $t\sim T$
on their fixed enlargements.
Applying \eqref{eq:local-reduction},
we obtain
\begin{equation*}
\int_T^{2T}\int_{\mathbb{R}^m\times Q_j}|E_{m+n}F_\tau[f_j]|^{p_0}\rmd x\rmd y\rmd t
\lesssim R^\varepsilon (L^mT^n)^{p_0/2-1} \|F_\tau[f_j]\|_{L^2}^2
\|f_j\|_{L^\infty}^{p_0-2}.
\end{equation*}
Summing over $j$ gives
\begin{equation}\label{eq:time-near}
\sum_j\int_T^{2T}\int_{\mathbb{R}^m\times Q_j}|E_{m+n}F_\tau[f_j]|^{p_0}\rmd x\rmd y\rmd t
\lesssim R^\varepsilon(L^mT^n)^{p_0/2-1}\|F_\tau[f]\|_{L^2}^2
\|f\|_{L^\infty}^{p_0-2}.
\end{equation}

We estimate $f-f_j$ on $Q_j$. For any finite scalar family $\{b_k\}$ satisfying $|k|\leq1$,
Cauchy--Schwarz' inequality, \eqref{eq:Ftau-orthogonality} and \eqref{eq:Ftau-decay} give
\begin{align*}
&\int_{\mathbb{R}^m}\left|\sum_kE_m[\rho_\tau(\cdot-k)](x,t)b_k\right|^{p_0}\rmd x\\
&\qquad\leq \left\|\sum_k E_m[\rho_\tau(\cdot-k)]b_k\right\|_{L^{\infty}}^{p_0-2}
\left\|\sum_k E_m[\rho_\tau(\cdot-k)]b_k\right\|_{L^{2}}^{2}\\
&\qquad\lesssim \frac{L^{m(p_0/2-1)}\|\rho\|_{L^2}^2}{\tau^m}
\left(\sum_k|b_k|^2\right)^{p_0/2}.
\end{align*}
Since $\chi=1$ on $\supp\widehat{f_k}$,
\begin{equation*}
\mathcal{E}_n(t)(f_k-(f_j)_k)(y)
=\int_{\mathbb{R}^n}K_t(y-z)(1-q_j(z))f_k(z)\rmd z.
\end{equation*}
For $y\in Q_j$, the integrand vanishes unless $|y-z|\geq CT$. By \eqref{eq:kernel-noncritical},
taking $C$ sufficiently large, we have for every $N>0$,
\begin{equation*}
\int_{|z|\geq CT}|K_t(z)|\rmd z
\lesssim_N\frac{1}{T^N},
\qquad t\in[T,2T].
\end{equation*}
Using Young's inequality and the disjointness of the $Q_j$, we obtain
\begin{align*}
&\sum_j\int_{\mathbb{R}^m\times Q_j}|E_{m+n}F_\tau[f-f_j]|^{p_0}\rmd x\rmd y\\
&\qquad \lesssim \frac{L^{m(p_0/2-1)}\|\rho\|_{L^{2}}^{2}}{\tau^{m}}
\int_{\mathbb{R}^n}\left(\int_{|y-z|\geq CT}|K_t(y-z)\left(\sum_k|f_k(z)|^{2}\right)^{1/2}\rmd z\right)^{p_0}\rmd y\\
&\qquad\lesssim\frac{L^{m(p_0/2-1)}\|\rho\|_{L^2}^2}
{\tau^mT^{Np_0}}\int_{\mathbb{R}^n}
\left(\sum_k|f_k(y)|^2\right)^{p_0/2}\rmd y.
\end{align*}
By Parseval's identity,
\begin{equation*}
\sum_k|f_k(y)|^2
\leq\|f\|_{L^\infty}^2,
\end{equation*}
and
\begin{equation*}
\frac{\|\rho\|_{L^2}^2}{\tau^m}\sum_k\|f_k\|_{L^2}^2
=\|F_\tau[f]\|_{L^2}^2.
\end{equation*}
Integrating over $[T,2T]$, we obtain
\begin{equation*}
\sum_j\int_T^{2T}\int_{\mathbb{R}^m\times Q_j}|E_{m+n}F_\tau[f-f_j]|^{p_0}\rmd x\rmd y\rmd t
\lesssim
\frac{L^{m(p_0/2-1)}}{T^{Np_0-1}}\|F_\tau[f]\|_{L^2}^2\|f\|_{L^\infty}^{p_0-2}.
\end{equation*}
Taking $N$ sufficiently large, this is bounded by the right-hand
side of \eqref{eq:time-interval}.
Combining this with
\eqref{eq:time-near} proves
\eqref{eq:time-interval}.

Finally, by the dyadic decomposition of $[0,R]$, \eqref{eq:bounded-time} and \eqref{eq:time-interval},
\begin{equation*}
\int_0^R\int_{\mathbb{R}^{m+n}}|E_{m+n}F_\tau|^{p_0}\rmd x\rmd y\rmd t
\lesssim R^\varepsilon L^{m(p_0/2-1)}
\left(1+\sum_{j\geq0, 2^j<R}2^{jn(p_0/2-1)}\right)
\|F_\tau\|_{L^2}^2\|f\|_{L^\infty}^{p_0-2}.
\end{equation*}
Since $n(p_0/2-1)>0$, we have
\begin{equation*}
\sum_{j\geq0, 2^j<R}2^{jn(p_0/2-1)}
\lesssim R^{n(p_0/2-1)}.
\end{equation*}
Thus
\begin{equation*}
\int_0^R\int_{\mathbb{R}^{m+n}}|E_{m+n}F_\tau|^{p_0}\rmd x\rmd y\rmd t
\lesssim R^\varepsilon (L^mR^n)^{p_0/2-1}\|F_\tau\|_{L^2}^2\|f\|_{L^\infty}^{p_0-2}.
\end{equation*}
By \eqref{eq:Ftau-Linfty} and the arbitrariness of $\varepsilon>0$, this proves \eqref{eq:Euclidean-mixed}.
\end{proof}

Hence, it suffices to establish \eqref{eq:local-reduction} to prove \eqref{eq:Euclidean-mixed}.

\section{Wave Packet Decomposition and $L^2$ Estimates}\label{sec:wave-packets}

Assume that $f$ is smooth in the periodic variables and Schwartz in the nonperiodic variables, with
$\supp\widehat f\subset \{(k,\xi)\in L^{-1}\mathbb{Z}^m \times\mathbb{R}^n: |k|\leq1,\ |\xi|\leq1\}$.
Fix a dyadic number $1\leq T\leq R$ and a ball $B_r(y_0,a)\subset\mathbb{R}^{n+1}$ with $1\leq r\lesssim T$ and $a\sim T$,
such that $t\sim T$ on its fixed enlargements. Throughout the following analysis, $L,R,\tau$ and $T$ remain fixed,
while $r$ and the center $(y_0,a)$ may vary when passing to smaller balls.

\subsection{Wave Packet Decomposition}

Wave packet decompositions in Fourier restriction theory go back to the work of C\'ordoba and Fefferman \cite{CF} and
were further developed by Bourgain \cite{B91} in the study of higher-dimensional restriction estimates.

We decompose $f$ in the nonperiodic variables at scale $r$, with time $a$.
Although the Fourier support of $f_k$ is contained in $B^n(0,1)$,
we use a fixed enlargement of this region to allow for the wave packet selections.

In the nonperiodic frequency space, let $\Theta$ be a finitely overlapping cover of $B^n(0,2)$ by balls $\theta$ of radius $r^{-1/2}$,
with centers $\xi_\theta$ on a $cr^{-1/2}$-lattice, where $c>0$ is sufficiently small.
Choose a nonnegative smooth partition of unity $\{\varphi_\theta\}_{\theta\in\Theta}$ satisfying
\begin{equation*}
\sum_{\theta\in\Theta}\varphi_\theta=1
\quad\text{on }B^n(0,2),
\qquad \supp\varphi_\theta\subset 2\theta,
\qquad \|D^\alpha\varphi_\theta\|_{L^\infty} \lesssim r^{|\alpha|/2}.
\end{equation*}

In the nonperiodic physical space, consider a finitely overlapping cover of $\mathbb{R}^{n}$ by balls of radius comparable to $r^{1/2}$,
centered at $v\in r^{1/2}\mathbb{Z}^n$.
Let $\{\psi_v\}$ be a nonnegative smooth partition of unity adapted to this cover, satisfying
\begin{equation*}
\sum_{v\in r^{1/2}\mathbb{Z}^n}\psi_v=1,
\qquad 0\leq\psi_v\leq1,
\qquad \supp\widehat{\psi_v} \subset B^n(0,c_1r^{-1/2}),
\end{equation*}
where $c_1>0$ is sufficiently small. Moreover, for every $N\geq0$ and multi-index $\alpha$,
\begin{equation}\label{eq:wp-decay}
|D^\alpha\psi_v(y)|
\lesssim \frac{1}{r^{|\alpha|/2}\left(1+{|y-v|}/{r^{1/2}}\right)^N}.
\end{equation}

Since the ball $B_r(y_0,a)$ is centered at time $a\sim T$, we decompose $\mathcal{E}_n(a)f_k$ at time $a$.
Note that
\begin{equation*}
\widehat{\mathcal{E}_n(a)f_k}(\xi)
=e(a|\xi|^2)\widehat{f_k}(\xi).
\end{equation*}
For $k\in L^{-1}\mathbb{Z}^m$, $\theta\in\Theta$ and $v\in r^{1/2}\mathbb{Z}^n$, write label $\iota=(k,\theta,v)$.
We use this notation throughout unless the individual components need to be specified. Define
\begin{equation*}
f_\iota^{(r,a)}(\xi)
=\left(e(a|\cdot|^2) \widehat{f_k}\varphi_\theta\right) \ast\widehat{\psi_v}(\xi).
\end{equation*}
By the Fourier support conditions,
\begin{equation*}
\supp f_\iota^{(r,a)}
\subset
B^n\left(\xi_\theta,
(2+c_1)r^{-1/2}\right).
\end{equation*}
Moreover, the partitions of unity give
\begin{equation*}
\sum_{\theta,v}f_\iota^{(r,a)}(\xi)
=e(a|\xi|^2)\widehat{f_k}(\xi),
\qquad \iota=(k,\theta,v).
\end{equation*}

For any collection $\mathcal W$ of labels $\iota$, define
\begin{equation*}
f_{\mathcal W,k}
=\mathcal{E}_n(-a)
\sum_{\substack{\theta, v\\ \iota\in \mathcal{W}}}(f_\iota^{(r,a)})^\vee,
\qquad f_{\mathcal W}(x,y)
=\sum_{k\in L^{-1}\mathbb{Z}^m}e(k\cdot x)f_{\mathcal W,k}(y).
\end{equation*}
It follows that
\begin{equation*}
\supp\widehat{f_{\mathcal W,k}}
\subset\supp\widehat{f_k}+B^n(0,c_1r^{-1/2}).
\end{equation*}
Consequently,
\begin{equation}\label{eq:wp-continuous}
F_\tau[f_{\mathcal W}](\eta,\xi)
=\sum_{\iota\in\mathcal W}\rho_\tau(\eta-k)e(-a|\xi|^2)f_\iota^{(r,a)}(\xi).
\end{equation}
Hence
\begin{equation}\label{eq:wp-evolution}
E_{m+n}F_\tau[f_{\mathcal W}](x,y,t)
=\sum_{\iota\in\mathcal W}E_m[\rho_\tau(\cdot-k)](x,t)E_nf_\iota^{(r,a)}(y,t-a).
\end{equation}

Fix a sufficiently small $\varepsilon_0>0$, to be chosen in terms of the loss exponent in subsequent estimates,
and sufficiently large constants $C_1,C_2$. For each $\iota=(k,\theta,v)$, we associate the tube
\begin{equation*}
\Pi_\iota
=\left\{(y,t)\in\mathbb{R}^{n+1}:|t-a|\leq C_2r,|y-v+2(t-a)\xi_\theta|\leq C_1r^{1/2+\varepsilon_0}\right\}.
\end{equation*}
Different labels are retained even when their corresponding tubes coincide.

\subsection{Localized $L^2$ Estimates}
We first establish weighted $L^2$ estimates needed for the wave packet decomposition.
Fix an integer $N_0>n+2$. For $B\geq1$ and $a_n\in\mathbb{R}^n$, define
\begin{equation*}
w_{B,a_n}(y)
=\frac{1}{\left(1+{|y-a_n|^2}/{B^2}\right)^{N_0}}.
\end{equation*}

\begin{lemma}\label{lem:weighted}
Let $\{b_\theta\}_{\theta\in\Theta}$ be smooth functions whose supports have uniformly bounded overlap. Suppose that
\begin{equation*}
\|D^\alpha b_\theta\|_{L^\infty}
\lesssim B^{|\alpha|},
\qquad |\alpha|\leq N_0.
\end{equation*}
Then, for $w=w_{B,a_n}$ or $w=w_{B,a_n}^{-1}$, we have
\begin{equation}\label{eq:weighted-analysis}
\sum_{\theta\in\Theta}
\|(b_\theta\widehat g)^\vee\|_{L^2(w)}^2
\lesssim \|g\|_{L^2(w)}^2
\end{equation}
and
\begin{equation}\label{eq:weighted-synthesis}
\left\|\sum_{\theta\in\Theta}(b_\theta\widehat{g_\theta})^\vee\right\|_{L^2(w)}^2
\lesssim\sum_{\theta\in\Theta}\|g_\theta\|_{L^2(w)}^2.
\end{equation}
\end{lemma}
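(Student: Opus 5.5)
The plan is to prove \eqref{eq:weighted-analysis} directly for both weights $w=w_{B,a_n}^{\pm1}$. Then \eqref{eq:weighted-synthesis} follows by duality, and this is exactly why the lemma is stated for $w$ and $w^{-1}$ simultaneously.

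I read the hypotheses as in the application: each $b_\theta$ is supported in a ball of radius $\lesssim B^{-1}$ (here $B\sim r^{1/2}$ and $b_\theta$ is essentially $\varphi_\theta$). This extra assumption is implicit in the statement and I will use it.

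\textbf{Basic facts.} Two facts drive the argument.

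First, the weight is slowly varying at scale $B$ (Peetre's inequality):
\begin{equation*}
w(y)\lesssim w(z)\left(1+\frac{|y-z|}{B}\right)^{2N_0}\quad\text{for } w=w_{B,a_n}^{\pm1}.
\end{equation*}

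Second, the kernels decay at scale $B$. Set $K_\theta=b_\theta^\vee$. Integrating by parts $N_0$ times, using $\|D^\alpha b_\theta\|_\infty\lesssim B^{|\alpha|}$ and $|\supp b_\theta|\lesssim B^{-n}$, gives
\begin{equation*}
|K_\theta(y)|\lesssim B^{-n}(1+|y|/B)^{-N_0}.
\end{equation*}

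\textbf{Proof of \eqref{eq:weighted-analysis}.}

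\emph{Step 1: localise.} Partition $\mathbb R^n$ into cubes $Q$ of side $B$. On each $Q$ the weight is essentially constant, say $w\sim w_Q$. Write $g=\sum_Q g1_Q$.

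\emph{Step 2: near-diagonal blocks.} For cubes $Q,Q'$ with $d=\operatorname{dist}(Q,Q')$, I aim for the off-diagonal orthogonal bound
\begin{equation*}
\sum_\theta\|1_{Q'}\,K_\theta*(g1_Q)\|_{L^2}^2\lesssim (1+d/B)^{-M}\|g1_Q\|_{L^2}^2,
\end{equation*}
with $M$ as large as $N_0$ permits. When $d\lesssim B$ this is just Plancherel together with the bounded overlap of the supports, since $\sum_\theta|b_\theta|^2\lesssim1$.

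\emph{Step 3: far blocks.} When $d\gg B$, only the part of $K_\theta$ with $|y|\gtrsim d$ contributes. I write
\begin{equation*}
K_\theta(y)\bigl(1-\chi(y/d)\bigr)=\bigl(B/|y|\bigr)^{2M}\bigl(1-\chi(y/d)\bigr)\cdot\bigl[(|y|/B)^{2M}K_\theta(y)\bigr].
\end{equation*}
The bracket is the inverse Fourier transform of $c\,B^{-2M}\Delta^M b_\theta$. These multipliers are bounded by $O(1)$ and still have boundedly overlapping supports. The prefactor $(B/|y|)^{2M}(1-\chi(y/d))$ is bounded by $(B/d)^{2M}$, so multiplying by it costs that factor in $L^2$ norm. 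Using Plancherel on the bracket then gives the decaying bound of Step 2.

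\emph{Step 4: combine.} Insert the weights and apply Cauchy--Schwarz over $Q$ with the summable factor $(1+d/B)^{-M/2}$:
\begin{equation*}
\sum_\theta\|K_\theta*g\|_{L^2(w)}^2\lesssim\sum_{Q'}w_{Q'}\sum_Q(1+d/B)^{-M/2}\|g1_Q\|_2^2\cdot\sum_Q(1+d/B)^{-M/2}.
\end{equation*}
Peetre's inequality gives $w_{Q'}\lesssim w_Q(1+d/B)^{2N_0}$. So $M$ must beat $2N_0+n$.

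\textbf{Main obstacle.} The delicate point is balancing this loss with only finitely many derivatives (the hypothesis stops at $|\alpha|\le N_0$). A cleaner alternative, which I would try first, avoids cubes altogether. Let $\Phi$ be a smooth function with $\widehat\Phi$ supported in $B(0,B^{-1})$ and $|\Phi|\sim w^{\pm1/2}$; such $\Phi$ can be built as a Schwartz-type majorant convolved at scale $B$. Then commute:
\begin{equation*}
\Phi\,(b_\theta\widehat g)^\vee=\bigl(\tilde b_\theta\,\widehat{\Phi g}\bigr)^\vee+\text{error},\qquad \tilde b_\theta\ \text{a slight enlargement of } b_\theta,
\end{equation*}
with the commutator controlled by the derivative bounds. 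If the cube argument gets tight in the exponents, this is the route I would take.

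\textbf{Proof of \eqref{eq:weighted-synthesis}.} Pair $\sum_\theta(b_\theta\widehat{g_\theta})^\vee$ with $h\in L^2(w^{-1})$, which is the dual of $L^2(w)$ under the unweighted pairing:
\begin{equation*}
\Bigl\langle\sum_\theta (b_\theta\widehat{g_\theta})^\vee,h\Bigr\rangle=\sum_\theta\langle g_\theta,(\overline{b_\theta}\,\widehat h)^\vee\rangle.
\end{equation*}
Apply Cauchy--Schwarz in $L^2(w)\times L^2(w^{-1})$ and then in $\theta$. Finally apply \eqref{eq:weighted-analysis} with the weight $w^{-1}$ and the multipliers $\overline{b_\theta}$, which satisfy the same hypotheses. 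This gives \eqref{eq:weighted-synthesis}.
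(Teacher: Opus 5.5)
\textbf{Verdict.} Your duality step, which derives \eqref{eq:weighted-synthesis} from \eqref{eq:weighted-analysis} with the weight $w^{-1}$ and the multipliers $\overline{b_\theta}$, is correct. The gap is in the direct proof of \eqref{eq:weighted-analysis} for both weights: it does not close under the stated hypothesis $|\alpha|\le N_0$.

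\textbf{Why the cube argument fails.} In the cube decomposition, the off-diagonal decay can only come from derivatives of $b_\theta$, and you have at most $N_0$ of them. Even with Step 3 done properly, the best available bound is
\[
\sum_\theta\|1_{Q'}K_\theta*(g1_Q)\|_2^2\lesssim(1+d/B)^{-2N_0}\|g1_Q\|_2^2 .
\]
Peetre's inequality for $w_{B,a_n}^{\pm1}$ costs exactly $(1+d/B)^{2N_0}$. Take $Q'$ near $a_n$ and $Q$ far away. The bound you can give on the weighted block is then $O(1)$ uniformly in $d$, so nothing is left to sum over the cubes, and no Schur-type bookkeeping rescues it. You noticed this, but the fallback does not resolve it. A band-limited $\Phi$ with $|\Phi|\sim w^{\pm1/2}$ varies at scale $B$, which is the same scale as the kernels of $b_\theta$. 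So the commutator of multiplication by $\Phi$ with $(b_\theta\widehat{\,\cdot\,})^\vee$ is as large as the main term, not an error controlled by the derivative bounds.

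\textbf{Two smaller points.}
\begin{itemize}
\item In Step 3, the factor $(B/|y|)^{2M}(1-\chi(y/d))$ multiplies the kernel, not the output. Its sup norm therefore does not control the convolution. What you need is an $L^1$ bound $\lesssim(B/d)^{2M}$ on its Fourier transform, together with Minkowski's inequality in $\theta$.
\item The extra assumption that $\supp b_\theta$ has radius $\lesssim B^{-1}$ is false in the paper's applications. There $B=T/r^{1/2}$ or $T/(r')^{1/2}$, which can be much larger than $r^{1/2}$, while the caps have radius $r^{-1/2}$. Fortunately the assumption is not needed.
\end{itemize}
With bounds on roughly $N_0+n$ derivatives, which the applications do supply, your cube argument would go through. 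That would prove a weaker lemma than the one stated.

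\textbf{The missing idea.} For the polynomially growing weight, no localization is needed. Reduce to $a_n=0$ by translation. Since $N_0$ is an integer, the binomial expansion gives $w_{B,0}^{-1}(y)\sim\sum_{|\alpha|\le N_0}y^{2\alpha}B^{-2|\alpha|}$. Plancherel then turns $\|(b_\theta\widehat g)^\vee\|_{L^2(w^{-1})}^2$ into $\sum_{|\alpha|\le N_0}B^{-2|\alpha|}\|D^\alpha(b_\theta\widehat g)\|_{L^2}^2$. Now:
\begin{itemize}
\item expand $D^\alpha(b_\theta\widehat g)$ by Leibniz;
\item use $\sum_\theta|D^\beta b_\theta|^2\lesssim B^{2|\beta|}$, which follows from bounded overlap;
\item note that $D^{\alpha-\beta}\widehat g$ is a constant multiple of $(y^{\alpha-\beta}g)^\wedge$;
\item apply Plancherel again.
\end{itemize}
This returns $\sum_{|\gamma|\le N_0}B^{-2|\gamma|}\|y^\gamma g\|_{L^2}^2\sim\|g\|_{L^2(w^{-1})}^2$, using exactly the $N_0$ derivatives assumed. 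The synthesis estimate for $w^{-1}$ is the same computation, with Cauchy--Schwarz in $\theta$ pointwise in $\xi$.

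The decaying weight $w_{B,0}$ then follows by duality, that is, your argument run in the opposite direction. Analysis for $w$ is dual to synthesis for $w^{-1}$, and synthesis for $w$ is dual to analysis for $w^{-1}$. This is exactly the paper's proof.
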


\begin{proof}
Denote $g_{a_n}(y)=g(y+a_n)$, then
\begin{align*}
\left\|(b_\theta\widehat{g})^{\vee}\right\|_{L^{2}(w_{B,a_n}^{\pm 1})}^2
&=\int_{\mathbb{R}^{n}}\left|\left(b_\theta\widehat{g}\right)^{\vee}(y)\right|^{2}\left(1+\frac{|y-a_n|^{2}}{B^{2}}\right)^{\mp N_0}\rmd y\\
&=\int_{\mathbb{R}^{n}}\left|\left(b_\theta\widehat{g}\right)^{\vee}(y+a_n)\right|^{2}\left(1+\frac{|y|^{2}}{B^{2}}\right)^{\mp N_0}\rmd y\\
&=\left\|\left(b_\theta \widehat{g_{a_n}}\right)^{\vee}\right\|_{L^{2}(w_{B,0}^{\pm 1})}^2.
\end{align*}
Thus, it suffices to consider the case $a_n=0$. Assume $w=w_{B, 0}^{-1}$. By the binomial expansion,
\begin{equation*}
w(y)\sim \sum_{|\alpha|\leq N_0}\frac{y^{2\alpha}}{B^{2|\alpha|}}.
\end{equation*}
Hence, by Plancherel's identity and Leibniz's rule, it drives that
\begin{align*}
\sum_{\theta \in \Theta}\left\|\left(b_\theta \widehat{g}\right)^{\vee}\right\|_{L^{2}(w)}^2
&\sim \sum_{\theta \in \Theta}\int_{\mathbb{R}^{n}}\sum_{|\alpha}\leq N_0 \frac{1}{B^{2|\alpha}}\left(y^{\alpha}\left(b_\theta\widehat{g}
\right)^{\vee}(y)\right)^{2}\rmd y\\
&=\sum_{|\alpha|\leq N_0}\frac{1}{B^{2|\alpha|}}\sum_{\theta \in \Theta}\int_{\mathbb{R}^{n}}\left| \left(D^{\alpha}\left(b_\theta
\widehat{g}\right)\right)^{\vee}(y)\right|^{2}\rmd y\\
&\sim \sum_{|\alpha|\leq N_0}\frac{1}{B^{2|\alpha|}}\sum_{\theta \in \Theta}\int_{\mathbb{R}^{n}}\left|\sum_{\beta\leq \alpha}
D^{\beta}b_\theta(\xi)\cdot \left(y^{\alpha-\beta}g\right)^{\wedge}(\xi)\right|^{2}\rmd \xi\\
&\lesssim \sum_{|\alpha|\leq N_0}\frac{1}{B^{2|\alpha|}}\sum_{\beta \leq \alpha}\sum_{\theta \in \Theta}\int_{\mathbb{R}^{n}}
\left|D^{\beta}b_\theta(\xi)\left(y^{\alpha-\beta}g\right)^{\wedge}(\xi)\right|^{2}\rmd \xi\\
&\lesssim \sum_{|\alpha|\leq N_0}\sum_{\beta \leq \alpha}
\int_{\mathbb{R}^{n}}\frac{y^{2(\alpha-\beta)}}{B^{2|\alpha-\beta}}|g(y)|^{2}\rmd y\\
&=\sum_{|\gamma|\leq N_0}\sum_{|\beta|\leq N_0-|\gamma|}
\int_{\mathbb{R}^{n}}\frac{y^{2\gamma}}{B^{2|\gamma|}}|g(y)|^{2}\rmd y\\
&\sim \|g\|_{L^{2}(w)}^2.
\end{align*}
This proves \eqref{eq:weighted-analysis}. The proof of \eqref{eq:weighted-synthesis} is similar, just noting that
\begin{equation*}
\sum_{\theta\in \Theta}\left|D^{\beta}b_\theta(\xi)\left(y^{\alpha-\beta}g\right)^{\wedge}(\xi)\right|^{2}
\leq \sum_{\theta\in \Theta}\left|D^{\beta}b_\theta(\xi)\right|^{2}
\sum_{\theta\in \Theta}\left|\left(y^{\alpha-\beta}g\right)^{\wedge}(\xi)\right|^{2}.
\end{equation*}
If $w=w_{B,0}$, by duality, we obtain that
\begin{align*}
\sum_{\theta\in\Theta}\left\|(b_\theta\widehat g)^\vee\right\|_{L^2(w)}^2
&=\sup_{\sum_\theta\|h_\theta\|_{L^2}^2=1}
\left|\sum_\theta\left\langle(b_\theta\widehat g)^\vee,w^{1/2}h_\theta\right\rangle\right|^2\\
&=\sup_{\sum_\theta\|h_\theta\|_{L^2}^2=1}\left|\left\langle w^{1/2}g, w^{-1/2}\sum_\theta
\left(\overline{b_\theta}\widehat{w^{1/2}h_\theta}\right)^\vee \right\rangle\right|^2\\
&\leq \|g\|_{L^2(w)}^2 \sup_{\sum_\theta\|h_\theta\|_{L^2}^2=1}
\left\| w^{-1/2}\sum_\theta \left(\overline{b_\theta}\widehat{w^{1/2}h_\theta}\right)^\vee\right\|_{L^2}^2\\
&\lesssim\|g\|_{L^2(w)}^2 \sup_{\sum_\theta\|h_\theta\|_{L^2}^2=1} \sum_\theta \|w^{-1/2}w^{1/2}h_\theta\|_{L^2}^2\\
&=\|g\|_{L^2(w)}^2.
\end{align*}
Similarly, for $w=w_{B,0}$, \eqref{eq:weighted-synthesis} also holds.
\end{proof}

In particular, the functions $\varphi_\theta$ of the frequency partition satisfy the hypotheses of
Lemma \ref{lem:weighted} whenever $B\geq r^{1/2}$.
Moreover, if
\begin{equation*}
\supp\widehat{g_\theta} \subset B^n(\xi_\theta,Cr^{-1/2})
\end{equation*}
for a fixed $C>0$, choosing smooth functions equal to one on these supports and supported
in fixed enlargements of the corresponding balls gives
\begin{equation}\label{eq:weighted-cap-synthesis}
\left\|\sum_{\theta\in\Theta}g_\theta\right\|_{L^2(w)}^2
\lesssim \sum_{\theta\in\Theta}\|g_\theta\|_{L^2(w)}^2,
\end{equation}
where $w=w_{B,a_n}$ or $w_{B,a_n}^{-1}$.

For each $\theta\in\Theta$, define
\begin{equation*}
U_\theta(t)g
=\left[e(t|\cdot-\xi_\theta|^2) \widehat g\right]^\vee.
\end{equation*}
If $|a|\lesssim T$, $1\leq r\leq T$, $B\geq T/r^{1/2}$, and $\supp\widehat g$ lies in a fixed enlargement of $\theta$, then
\begin{equation}\label{eq:centered-propagation}
\|U_\theta(\pm a)g\|_{L^2(w_{B,a_n})}
\lesssim \|g\|_{L^2(w_{B,a_n})}.
\end{equation}
Indeed, choose a smooth function $\chi_\theta$ equal to one on $\supp\widehat g$ and supported in a fixed enlargement of $\theta$,
satisfying $\|D^\alpha\chi_\theta\|_{L^\infty} \lesssim r^{|\alpha|/2}$. Set
$b_\theta(\xi)=\chi_\theta(\xi)e(\pm a|\xi-\xi_\theta|^2)$.
Changing variables $\zeta=r^{1/2}(\xi-\xi_\theta)$, and using the chain rule, we obtain
\begin{equation*}
\|D^\alpha b_\theta\|_{L^\infty}
\lesssim
r^{|\alpha|/2}
\left(1+\frac{|a|}{r}\right)^{|\alpha|}.
\end{equation*}
Since $1\leq r\leq T$, $|a|\lesssim T$ and $B\geq T/r^{1/2}$, we have
\begin{equation*}
r^{1/2}\left(1+\frac{|a|}{r}\right)
\lesssim \frac{T}{r^{1/2}}\leq B.
\end{equation*}
Consequently,
\begin{equation*}
\|D^\alpha b_\theta\|_{L^\infty}
\lesssim_\alpha B^{|\alpha|},
\qquad |\alpha|\leq N_0.
\end{equation*}
Thus $b_\theta$ satisfies the hypotheses of Lemma \ref{lem:weighted}. Since
$U_\theta(\pm a)g=(b_\theta\widehat g)^\vee$,
applying \eqref{eq:weighted-analysis} to $b_\theta$ gives \eqref{eq:centered-propagation}.

We define the localized $L^2$ average by
\begin{equation*}
M_{T,r}(f)^2
=\left(\frac{r^{1/2}}{T}\right)^n \sup_{a_n\in\mathbb{R}^n} \sum_k\int_{\mathbb{R}^n} w_{T/r^{1/2},a_n}(y)|f_k(y)|^2\rmd y.
\end{equation*}
By Parseval's identity and \eqref{eq:Ftau-Linfty}, we have
\begin{equation*}
M_{T,r}(f)
\lesssim \|f\|_{L^\infty}
\lesssim \|F_\tau[f]^\vee\|_{L^\infty}
\end{equation*}
and since $0\leq w_{T/r^{1/2},a_n}\leq 1$,
\begin{equation}\label{eq:M-L2}
M_{T,r}(f)^2
\leq \left(\frac{r^{1/2}}{T}\right)^n \frac{\tau^m}{\|\rho\|_{L^2}^2} \|F_\tau[f]\|_{L^2}^2.
\end{equation}
Furthermore, for $1\leq r'\leq r$,
\begin{equation}\label{eq:M-monotonicity}
M_{T,r'}(f)\lesssim M_{T,r}(f).
\end{equation}
To prove \eqref{eq:M-monotonicity},
set $B=T/r^{1/2}$, $B'=T/(r')^{1/2}\geq B$. For every $S\geq B$, a ball of radius $S$ can be covered by at most $C(S/B)^n$ balls of radius $B$.
Since $w_{B,a_n}$ is bounded below on $B^n(a_n,B)$, we have
\begin{equation*}
\sum_k\int_{B^n(a_n,S)}|f_k(y)|^2\rmd y
\lesssim \left(\frac SB\right)^n \sup_{z\in\mathbb{R}^n} \sum_k\int_{\mathbb{R}^n} w_{B,z}(y)|f_k(y)|^2\rmd y.
\end{equation*}
By decomposing $\mathbb{R}^n$ into dyadic annuli centered at $a_n$, we obtain
\begin{align*}
\sum_k\int_{\mathbb{R}^n} w_{B',a_n}(y)|f_k(y)|^2\rmd y
&\lesssim \sum_{j\geq0} \frac{1}{2^{2N_0j}} \sum_k\int_{B^n(a_n,2^{j+1}B')} |f_k(y)|^2\rmd y\\
&\lesssim \left(\frac{B'}{B}\right)^n \sum_{j\geq0} \frac{1}{2^{j(2N_0-n)}} \sup_{z\in\mathbb{R}^n}
\sum_k\int_{\mathbb{R}^n} w_{B,z}(y)|f_k(y)|^2\rmd y\\
&\lesssim \left(\frac{B'}{B}\right)^n \sup_{z\in\mathbb{R}^n} \sum_k\int_{\mathbb{R}^n} w_{B,z}(y)|f_k(y)|^2\rmd y,
\end{align*}
where the last inequality follows from $2N_0>n$. Taking the supremum over $a_n$ and multiplying by $1/(B')^{n}$, we obtain
\begin{equation*}
M_{T,r'}(f)^2
\lesssim \frac{1}{(B')^n}\left(\frac{B'}{B}\right)^n\sup_z\sum_k\int_{\mathbb{R}^n}w_{B,z}(y)|f_k(y)|^2\rmd y
=M_{T,r}(f)^2.
\end{equation*}

The localized average is also preserved under wave packet selections.

\begin{lemma}\label{lem:packet-subsum}
Let $\mathcal W$ be any collection of the wave packets at $(r,a)$. For every $1\leq r'\leq r$, we have
\begin{equation}\label{eq:packet-average}
M_{T,r'}(f_{\mathcal W})
\lesssim M_{T,r}(f).
\end{equation}
Moreover, let $\{\mathcal W_i\}_{i\in I}$ be a finite family of subsets of the labels in the same wave packet decomposition
of $f$ at $(r,a)$, satisfying
\begin{equation*}
\sup_\iota \sum_{i\in I}1_{\mathcal W_i}(\iota)
\leq D.
\end{equation*}
Then
\begin{equation}\label{eq:packet-subsum-L2}
\sum_{i\in I} \|F_\tau[f_{\mathcal W_i}]\|_{L^2}^2
\lesssim D\|F_\tau[f]\|_{L^2}^2.
\end{equation}
\end{lemma}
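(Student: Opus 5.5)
We expect to prove both parts of Lemma \ref{lem:packet-subsum} by working on each Fourier coefficient $k$ separately and unwinding the definition of $f_{\mathcal W,k}$. Write $P_\theta g=(\varphi_\theta\widehat g)^\vee$. Since $(\widehat h\ast\widehat{\psi_v})^\vee=\psi_v h$, we have
\begin{equation*}
(f_\iota^{(r,a)})^\vee=\psi_v\,P_\theta\mathcal{E}_n(a)f_k.
\end{equation*}
For a collection $\mathcal W$, let $S_{k,\theta}=\{v:(k,\theta,v)\in\mathcal W\}$. Then
\begin{equation*}
f_{\mathcal W,k}=\mathcal{E}_n(-a)\sum_\theta \Psi_{k,\theta}\,P_\theta\mathcal{E}_n(a)f_k,
\qquad
\Psi_{k,\theta}=\sum_{v\in S_{k,\theta}}\psi_v .
\end{equation*}
Since $0\le\psi_v$ and $\sum_v\psi_v=1$, we have $0\le\Psi_{k,\theta}\le1$ pointwise. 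Moreover, $\Psi_{k,\theta}P_\theta\mathcal{E}_n(a)f_k$ has Fourier support in $B^n(\xi_\theta,(2+c_1)r^{-1/2})$. The proof consists of three steps: almost orthogonality in $\theta$, a pointwise bound for the multiplication by $\Psi_{k,\theta}$, and a return to $f_k$.

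\textbf{The $L^2$ bound \eqref{eq:packet-subsum-L2}.} By \eqref{eq:Ftau-L2}, it suffices to show
\begin{equation*}
\sum_i\sum_k\|f_{\mathcal W_i,k}\|_{L^2}^2\lesssim D\sum_k\|f_k\|_{L^2}^2 .
\end{equation*}
\begin{itemize}
\item Since $\mathcal{E}_n(-a)$ is unitary, and the $\theta$-pieces have Fourier supports in boundedly overlapping balls, \eqref{eq:weighted-cap-synthesis} (with trivial weight, or simply Plancherel) gives
\begin{equation*}
\|f_{\mathcal W_i,k}\|_2^2\lesssim\sum_\theta\|\Psi^{(i)}_{k,\theta}P_\theta\mathcal{E}_n(a)f_k\|_2^2 .
\end{equation*}
\item By Cauchy--Schwarz and $\Psi^{(i)}_{k,\theta}\le1$, we have $|\Psi^{(i)}_{k,\theta}g|^2\le\Psi^{(i)}_{k,\theta}|g|^2$.
\item The overlap hypothesis gives $\sum_i\Psi^{(i)}_{k,\theta}\le D\sum_v\psi_v=D$.
\end{itemize}
Summing over $i$ therefore yields at most $D\sum_\theta\|P_\theta\mathcal{E}_n(a)f_k\|_2^2\lesssim D\|f_k\|_2^2$. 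The last inequality is \eqref{eq:weighted-analysis} with trivial weight, together with unitarity of $\mathcal{E}_n(a)$.

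\textbf{The localized bound \eqref{eq:packet-average}.} By \eqref{eq:M-monotonicity}, it suffices to prove $M_{T,r'}(f_{\mathcal W})\lesssim M_{T,r'}(f)$. Fix $a_n$ and set $B=T/(r')^{1/2}$. Since $r'\le r\le T$, we have $B\ge T/r^{1/2}\ge r^{1/2}$, so the hypotheses of Lemma \ref{lem:weighted} and of \eqref{eq:centered-propagation} are met at scale $r$. Factor the propagator on the $\theta$-piece as
\begin{equation*}
e(\pm a|\xi|^2)=e(\pm a|\xi-\xi_\theta|^2)\,e(\pm2a\xi\cdot\xi_\theta)\,e(\mp a|\xi_\theta|^2).
\end{equation*}
Thus $\mathcal{E}_n(\pm a)$ on that piece equals a unimodular constant times $U_\theta(\pm a)$ composed with translation by $\pm2a\xi_\theta$. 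The plan is:
\begin{itemize}
\item Apply \eqref{eq:weighted-cap-synthesis} in $L^2(w_{B,a_n})$ to split $f_{\mathcal W,k}$ into its $\theta$-pieces.
\item On each piece, absorb the outer translation by $-2a\xi_\theta$ into the weight, replacing $w_{B,a_n}$ by $w_{B,a_n+2a\xi_\theta}$. Remove $U_\theta(-a)$ using \eqref{eq:centered-propagation}.
\item Drop $\Psi_{k,\theta}$ pointwise, since $0\le\Psi_{k,\theta}\le1$.
\item Reverse the translation (undoing the shift of the weight) and apply \eqref{eq:centered-propagation} again for $U_\theta(a)$.
\end{itemize}
This gives
\begin{equation*}
\|f_{\mathcal W,k}\|^2_{L^2(w_{B,a_n})}\lesssim\sum_\theta\|P_\theta f_k\|^2_{L^2(w_{B,a_n})}\lesssim\|f_k\|^2_{L^2(w_{B,a_n})},
\end{equation*}
where the last step is \eqref{eq:weighted-analysis}. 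Summing over $k$, multiplying by $((r')^{1/2}/T)^n$ and taking the supremum over $a_n$ gives the claim.

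The main obstacle is the bookkeeping in the weighted estimate. The conjugation by $\mathcal{E}_n(\pm a)$ does not commute with the weight; it moves its center by $2a\xi_\theta$, which differs from one $\theta$ to the next. We must check that the shift is applied and undone consistently on the same piece, so that the final bound returns to the single weight $w_{B,a_n}$. The multiplication by $\Psi_{k,\theta}$ sits between the two translations, and the sharp pointwise bound $\Psi_{k,\theta}\le1$ is what allows it to be dropped with no loss.
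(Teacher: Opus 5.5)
Your proposal is correct and follows the paper's proof essentially step for step. The paper uses the same cutoffs $Q_{k,\theta}=\Psi_{k,\theta}$ and the same Plancherel/overlap argument, $\sum_i Q_{i,k,\theta}^2\le\sum_i Q_{i,k,\theta}\le D$, for \eqref{eq:packet-subsum-L2}; for \eqref{eq:packet-average} it runs the same chain \eqref{eq:weighted-cap-synthesis}, \eqref{eq:centered-propagation} twice, $0\le Q_{k,\theta}\le1$, \eqref{eq:weighted-analysis} and \eqref{eq:M-monotonicity}. The only cosmetic difference is that the paper keeps the weight fixed and conjugates the translations through, writing each piece as $U_\theta(-a)\bigl(Q_{k,\theta}(\cdot-2a\xi_\theta)U_\theta(a)(\varphi_\theta\widehat{f_k})^\vee\bigr)$, whereas you move the shift onto the weight center. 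That center actually becomes $a_n-2a\xi_\theta$ rather than $a_n+2a\xi_\theta$, a harmless sign slip since \eqref{eq:centered-propagation} holds for every center and the shift is undone.
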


\begin{proof}
For each $k,\theta$, set
\begin{equation*}
Q_{k,\theta}(y)
=\sum_{\substack{v\in r^{1/2}\mathbb{Z}^n\\(k,\theta,v)\in\mathcal W}}\psi_v(y).
\end{equation*}
By the spatial partition of unity, we have $0\leq Q_{k,\theta}\leq1$ with $\supp\widehat{Q_{k,\theta}}\subset B^n(0,c_1r^{-1/2})$.
Note that
\begin{equation*}
\mathcal{E}_n(a)g(y)
=e(-a|\xi_\theta|^2)U_\theta(a)g(y+2a\xi_\theta).
\end{equation*}
By the definition of $f_{\mathcal W,k}$, we get
\begin{align*}
f_{\mathcal W,k}
&=\sum_{\theta\in \Theta}\mathcal{E}_{n}(-a)\left(Q_{k,\theta}\mathcal{E}_n(a)\left(\varphi_\theta\widehat{f_k}\right)^\vee\right)\\
&=\sum_{\theta\in \Theta}U_\theta(-a)\left(Q_{k,\theta}(\cdot-2a\xi_\theta)U_\theta(a)(\varphi_\theta\widehat{f_k})^\vee\right).
\end{align*}
Fix $1\leq r'\leq r$ and set $B'=T/(r')^{1/2}\geq T/r^{1/2}$.
By \eqref{eq:weighted-cap-synthesis}, \eqref{eq:centered-propagation} and \eqref{eq:weighted-analysis}, we obtain
\begin{align*}
\sum_k\|f_{\mathcal W,k}\|_{L^2(w_{B',a_n})}^2
&\lesssim \sum_{k,\theta} \left\| U_\theta(-a)
\left(Q_{k,\theta}(\cdot-2a\xi_\theta) U_\theta(a)(\varphi_\theta\widehat{f_k})^\vee \right)
\right\|_{L^2(w_{B',a_n})}^2\\
&\lesssim \sum_{k,\theta} \left\| Q_{k,\theta}(\cdot-2a\xi_\theta) U_\theta(a)
(\varphi_\theta\widehat{f_k})^\vee \right\|_{L^2(w_{B',a_n})}^2\\
&\leq \sum_{k,\theta} \left\| U_\theta(a) (\varphi_\theta\widehat{f_k})^\vee \right\|_{L^2(w_{B',a_n})}^2\\
&\lesssim \sum_{k,\theta} \|(\varphi_\theta\widehat{f_k})^\vee \|_{L^2(w_{B',a_n})}^2\\
&\lesssim \sum_k\|f_k\|_{L^2(w_{B',a_n})}^2.
\end{align*}
Multiplying by $((r')^{1/2}/T)^n$, taking the supremum over $a_n$, and applying \eqref{eq:M-monotonicity}, we obtain
\begin{equation*}
M_{T,r'}(f_{\mathcal W})
\lesssim M_{T,r'}(f)
\lesssim M_{T,r}(f).
\end{equation*}
This proves \eqref{eq:packet-average}.

For the second assertion, set
\begin{equation*}
Q_{i,k,\theta}(y)
=\sum_{\substack{v\in r^{1/2}\mathbb{Z}^n\\ (k,\theta,v)\in\mathcal W_i}} \psi_v(y).
\end{equation*}
Since $0\leq Q_{i,k,\theta}\leq1$,
the multiplicity assumption gives
\begin{equation*}
\sum_{i\in I}Q_{i,k,\theta}(y)^2
\leq\sum_{i\in I}Q_{i,k,\theta}(y)
=\sum_v\left(\sum_{i\in I}1_{\mathcal W_i}(k,\theta,v)\right)\psi_v(y)
\leq D.
\end{equation*}
By the bounded overlap of the enlarged frequency balls, and Plancherel's identity, we have
\begin{align*}
\sum_{i\in I}\sum_k\|f_{\mathcal W_i,k}\|_{L^2}^2
&= \sum_{i\in I}\sum_k\left\|\sum_{\theta\in\Theta}Q_{i,k,\theta}
\left(\varphi_\theta e(a|\cdot|^2)\widehat{f_k}\right)^\vee\right\|_{L^2}^2\\
&\lesssim \sum_{i\in I}\sum_{k,\theta}\left\|Q_{i,k,\theta}\left(\varphi_\theta e(a|\cdot|^2)\widehat{f_k}\right)^\vee\right\|_{L^2}^2\\
&\leq D\sum_{k,\theta}\left\|\left(\varphi_\theta e(a|\cdot|^2)\widehat{f_k}\right)^\vee\right\|_{L^2}^2\\
&\lesssim D\sum_k\|f_k\|_{L^2}^2.
\end{align*}
Finally, applying \eqref{eq:Ftau-L2}
to $f_{\mathcal W_i}$ and $f$, we obtain \eqref{eq:packet-subsum-L2}.
\end{proof}

\subsection{Wave Packet $L^2$ Estimates}
We next establish localized and global $L^2$ estimates for the wave packets.

\begin{lemma}\label{lem:packet-L2}
Let $\mathcal W$ be any collection of wave packets at $(r,a)$.
For every $a_n\in\mathbb{R}^n$ and fixed $C_3>0$, we have
\begin{equation}\label{eq:packet-local-L2}
\sum_{\substack{\iota\in\mathcal W\\|v+2a\xi_\theta-a_n|
\leq C_3T/r^{1/2}}}\|f_\iota^{(r,a)}\|_{L^2}^2
\lesssim \left(\frac{T}{r^{1/2}}\right)^n M_{T,r}(f)^2.
\end{equation}
In particular, suppose that $\mathcal W$ satisfies
\begin{equation*}
\bar\beta
\leq\frac{\|f_\iota^{(r,a)}\|_{L^2}}{r^{n/4}}
<2\bar\beta,
\qquad \iota\in\mathcal W.
\end{equation*}
Set
\begin{equation*}
P_r
=\sup_{a_n\in\mathbb{R}^n}\#\left\{\iota\in\mathcal W:|v+2a\xi_\theta-a_n|\leq C_3T/r^{1/2}\right\}.
\end{equation*}
Then
\begin{equation}\label{eq:directional-bound}
\bar\beta^2P_r
\lesssim \left(\frac Tr\right)^n M_{T,r}(f)^2.
\end{equation}
\end{lemma}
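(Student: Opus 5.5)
We prove \eqref{eq:packet-local-L2} by realizing the left-hand side, up to constants, as a weighted $L^2$ norm of the wave packets, and then reduce it to the weighted bound on $f_k$ from the proof of Lemma \ref{lem:packet-subsum}. Set $B=T/r^{1/2}$. For $\iota=(k,\theta,v)$ write, as in the proof of Lemma \ref{lem:packet-subsum},
\begin{equation*}
(f_\iota^{(r,a)})^\vee(y)=\psi_v(y)\,\mathcal E_n(a)(\varphi_\theta\widehat{f_k})^\vee(y),
\end{equation*}
which is concentrated near $v$. The translated quantity $U_\theta(a)(\varphi_\theta\widehat{f_k})^\vee(\cdot-2a\xi_\theta)$ differs from it only by a unimodular phase. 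We therefore work with $g_{k,\theta}:=U_\theta(a)(\varphi_\theta\widehat{f_k})^\vee$ and the translated bumps $\psi_v(\cdot-2a\xi_\theta)$, which are concentrated near $v+2a\xi_\theta$ in the translated frame.

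The key steps, in order, are as follows.

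First, since $|v+2a\xi_\theta-a_n|\le C_3B$ and $r^{1/2}\le B$ (because $r\le T$), we have $w_{B,a_n}\gtrsim 1$ on the $r^{1/2}$-neighbourhood of $v+2a\xi_\theta$. The rapid decay \eqref{eq:wp-decay} of $\psi_v$, together with $(1+|y-v|/r^{1/2})^{-N}\lesssim w_{B,a_n}(y+2a\xi_\theta)$ for such $v$ and $N\ge 2N_0$, then gives
\begin{equation*}
\|f_\iota^{(r,a)}\|_{L^2}^2\lesssim \int \psi_v(y)\,|g_{k,\theta}(y+2a\xi_\theta)|^2\, w_{B,a_n}(y+2a\xi_\theta)\,\rmd y .
\end{equation*}
To use the partition of unity we need $\psi_v$ rather than $\psi_v^2$. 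This is fine because $0\le\psi_v\le1$ gives $\psi_v^2\le\psi_v$.

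Second, we sum over $v$ using $\sum_v\psi_v=1$, which yields $\sum_{k,\theta}\|g_{k,\theta}\|^2_{L^2(w_{B,a_n})}$. We then apply \eqref{eq:centered-propagation} (valid since $B\ge T/r^{1/2}$ and $|a|\lesssim T$) followed by \eqref{eq:weighted-analysis} for $\{\varphi_\theta\}$ to bound this by $\sum_k\|f_k\|^2_{L^2(w_{B,a_n})}$. By the definition of $M_{T,r}$, this is at most $B^nM_{T,r}(f)^2$, which is \eqref{eq:packet-local-L2}.

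Third, for \eqref{eq:directional-bound} we choose $a_n$ nearly attaining the supremum in $P_r$. Each counted $\iota$ contributes at least $\bar\beta^2 r^{n/2}$ to the left-hand side of \eqref{eq:packet-local-L2}, so
\begin{equation*}
\bar\beta^2 r^{n/2}P_r\lesssim (T/r^{1/2})^nM_{T,r}(f)^2 .
\end{equation*}
Dividing by $r^{n/2}$ gives $(T/r)^n M_{T,r}(f)^2$, as claimed.

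The main obstacle is the first step. The weight $w_{B,a_n}$ is centered at $a_n$ in the original frame, whereas the packet lives near $v$. The mismatch $2a\xi_\theta$ depends on $\theta$, so the weight must be translated consistently for each $\theta$ before \eqref{eq:centered-propagation} can be applied. This translation is harmless because the estimate \eqref{eq:centered-propagation} holds uniformly over all weight centers, and the hypothesis is stated exactly in terms of $v+2a\xi_\theta$.
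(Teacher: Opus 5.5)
Your proof is correct and follows the paper's argument: the same comparison $(1+|y-v|/r^{1/2})^{-2N_0}\lesssim w_{B,a_n}(y+2a\xi_\theta)$ from the triangle inequality and $B\ge r^{1/2}$, the same rewriting via $U_\theta(a)$ followed by \eqref{eq:centered-propagation} and \eqref{eq:weighted-analysis}, and the same counting step for $P_r$. The differences are cosmetic. You absorb one factor of $\psi_v$ into the weight and sum using $\sum_v\psi_v=1$, whereas the paper sums the lattice tails $\sum_v(1+|y-v|/r^{1/2})^{-N}\lesssim 1$ directly. Also, your prose should read $g_{k,\theta}(\cdot+2a\xi_\theta)$ rather than $(\cdot-2a\xi_\theta)$, as your display already has it.
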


\begin{proof}
Set $B=\frac{T}{r^{1/2}}\geq r^{1/2}$. For every $\theta\in\Theta$, $a_n\in\mathbb{R}^n$ and $v$ obeying $|v+2a\xi_\theta-a_n|\leq C_3B$,
triangular inequality yields
\begin{equation*}
1+\frac{|y+2a\xi_\theta-a_n|}{B}
\lesssim 1+\frac{|y-v|}{r^{1/2}}.
\end{equation*}
Thus, taking the decay order in \eqref{eq:wp-decay} sufficiently large, one gives
\begin{align*}
\sum_{\substack{v\in r^{1/2}\mathbb{Z}^n\\ |v+2a\xi_\theta-a_n|\leq C_3B}}|\psi_v(y)|^2
&\lesssim \sum_{v\in r^{1/2}\mathbb{Z}^n}\frac{w_{B,a_n}(y+2a\xi_\theta)}{\left(1+{|y-v|}/{r^{1/2}}\right)^{N}}\\
&\lesssim w_{B,a_n}(y+2a\xi_\theta).
\end{align*}
By the definition of $f_\iota^{(r,a)}$ and Plancherel's identity, we obtain
\begin{align*}
\sum_{\substack{\iota\in\mathcal W\\ |v+2a\xi_\theta-a_n|\leq C_3B}} \|f_\iota^{(r,a)}\|_{L^2}^2
&= \sum_{k,\theta}\int_{\mathbb{R}^n}\sum_{\substack{v:\,(k,\theta,v)\in\mathcal W\\|v+2a\xi_\theta-a_n|\leq C_3B}}|\psi_v(y)|^2
\left|\left(e(a|\cdot|^2)\varphi_\theta\widehat{f_k}\right)^\vee(y)\right|^2\rmd y\\
&\lesssim \sum_{k,\theta}\int_{\mathbb{R}^n} w_{B,a_n}(y+2a\xi_\theta)
\left|\left(e(a|\cdot|^2)\varphi_\theta\widehat{f_k}\right)^\vee(y) \right|^2\rmd y.
\end{align*}
Since
\begin{equation*}
\left(e(a|\cdot|^2)\varphi_\theta\widehat{f_k}\right)^\vee(y)
= e(-a|\xi_\theta|^2)U_\theta(a)(\varphi_\theta\widehat{f_k})^\vee(y+2a\xi_\theta),
\end{equation*}
changing variables $z=y+2a\xi_\theta$ and applying \eqref{eq:centered-propagation} and \eqref{eq:weighted-analysis} yield
\begin{align*}
\sum_{\substack{\iota\in\mathcal W\\|v+2a\xi_\theta-a_n|\leq C_3B}}\|f_\iota^{(r,a)}\|_{L^2}^2
&\lesssim \sum_{k,\theta}\left\|U_\theta(a)(\varphi_\theta\widehat{f_k})^\vee \right\|_{L^2(w_{B,a_n})}^2\\
&\lesssim \sum_{k,\theta}\|(\varphi_\theta\widehat{f_k})^\vee\|_{L^2(w_{B,a_n})}^2\\
&\lesssim \sum_k\|f_k\|_{L^2(w_{B,a_n})}^2\\
&\leq B^nM_{T,r}(f)^2.
\end{align*}
This proves \eqref{eq:packet-local-L2}.

Under the assumption, every term counted in $P_r$ satisfies
\begin{equation*}
\|f_\iota^{(r,a)}\|_{L^2}^2
\geq \bar\beta^2r^{n/2}.
\end{equation*}
Therefore, by \eqref{eq:packet-local-L2},
\begin{equation*}
\bar\beta^2r^{n/2}P_r
\lesssim B^nM_{T,r}(f)^2
=\left(\frac{T}{r^{1/2}}\right)^n M_{T,r}(f)^2.
\end{equation*}
Dividing by $r^{n/2}$ gives \eqref{eq:directional-bound}.
\end{proof}

Moreover, the full collection of wave packets satisfies
\begin{equation}\label{eq:packet-L2}
\frac{\|\rho\|_{L^2}^2}{\tau^m}\sum_\iota\|f_\iota^{(r,a)}\|_{L^2}^2
\lesssim \|F_\tau[f]\|_{L^2}^2.
\end{equation}
Indeed, by Plancherel's identity and the spatial partition of unity, we have
\begin{align*}
\sum_\iota\|f_\iota^{(r,a)}\|_{L^2}^2
&= \sum_{k,\theta,v}\int_{\mathbb{R}^n}|\psi_v(y)|^2\left|\left(e(a|\cdot|^2)\varphi_\theta\widehat{f_k}\right)^\vee(y)\right|^2\rmd y\\
&\leq \sum_{k,\theta}\left\|\left(e(a|\cdot|^2)\varphi_\theta\widehat{f_k}\right)^\vee\right\|_{L^2}^2\\
&= \sum_k\int_{\mathbb{R}^n}\left(\sum_{\theta\in\Theta}|\varphi_\theta(\xi)|^2\right)|\widehat{f_k}(\xi)|^2\rmd\xi\\
&\leq \sum_k\|f_k\|_{L^2}^2.
\end{align*}
Here we used $\sum_v|\psi_v|^2\leq1$ and $\sum_\theta|\varphi_\theta|^2\leq1$, which follow from the nonnegative partitions of unity.
Multiplying by $\|\rho\|_{L^2}^2/\tau^m$ and applying \eqref{eq:Ftau-L2} gives \eqref{eq:packet-L2}.

\subsection{Coefficient Estimates}
We next bound the Fourier coefficients and the corresponding wave packets in terms of $M_{T,r}(f)$.

\begin{lemma}\label{lem:coefficient}
Let $f_k$ be the periodic Fourier coefficients of $f$.
For $t\sim T\geq1$ and $1\leq r\leq T$, we have
\begin{equation}\label{eq:coefficient-square}
\sup_{y\in\mathbb{R}^n} \left(\sum_k|\mathcal{E}_n(t)f_k(y)|^2\right)^{1/2}
\lesssim T^{n/2}M_{T,r}(f).
\end{equation}
Moreover, for every $\theta\in\Theta$ and every subset $\mathcal K$ of the coefficient indices,
\begin{equation}\label{eq:cap-square}
\sup_{y\in\mathbb{R}^n}
\left(\sum_{k\in\mathcal K}\left|\left(\varphi_\theta e(t|\cdot|^2)\widehat{f_k}\right)^\vee(y)\right|^2\right)^{1/2}
\lesssim T^{n/2}M_{T,r}(f).
\end{equation}
\end{lemma}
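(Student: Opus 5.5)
The plan is to write $\mathcal{E}_n(t)f_k$ as a convolution with the localized kernel of Corollary \ref{cor:kernel}, use Minkowski's inequality to bring the $\ell^2_k$ norm inside the integral, and then bound the resulting integral by the weighted $L^2$ averages that define $M_{T,r}(f)$. This follows the scheme of the proof of Lemma \ref{lem:time-reduction}, with the sup-norm replaced by a weighted $L^2$ norm.

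\emph{Step 1: kernel representation.} Let $\chi$ be equal to one on $B^n(0,2)$ and compactly supported, and let $K_t$ be as in Corollary \ref{cor:kernel}. Since $\supp\widehat{f_k}\subset B^n(0,1)$, we have $\mathcal{E}_n(t)f_k=K_t\ast f_k$. Minkowski's inequality gives
\begin{equation*}
\Big(\sum_k|\mathcal{E}_n(t)f_k(y)|^2\Big)^{1/2}\leq\int_{\mathbb{R}^n}|K_t(y-z)|\,G(z)\,\rmd z,
\qquad G=\Big(\sum_k|f_k|^2\Big)^{1/2}.
\end{equation*}

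\emph{Step 2: Cauchy--Schwarz against the weight.} For $t\sim T$, \eqref{eq:kernel-time} gives $|K_t(y-z)|\lesssim T^{-n/2}(1+|y-z|/T)^{-N}$. Set $B=T/r^{1/2}\leq T$. Writing $|K_t|=|K_t|w_{B,y}^{-1/2}\cdot w_{B,y}^{1/2}$ and applying Cauchy--Schwarz bounds the integral by
\begin{equation*}
\Big(\int_{\mathbb{R}^n}|K_t(y-z)|^2w_{B,y}(z)^{-1}\rmd z\Big)^{1/2}\Big(\int_{\mathbb{R}^n} w_{B,y}(z)G(z)^2\rmd z\Big)^{1/2}.
\end{equation*}
By definition of $M_{T,r}(f)$, the second factor is at most $B^{n/2}M_{T,r}(f)$.

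\emph{Step 3: the kernel factor, which is the main obstacle.} The first factor is $\int T^{-n}(1+|u|/T)^{-2N}(1+|u|^2/B^2)^{N_0}\rmd u$. Since $B\leq T$, the polynomial growth of $w^{-1}$ at scale $B$ must be beaten by the kernel decay at scale $T$, and $w^{-1}$ is large on the region $|u|\lesssim T$. Naively this gives $T^{-n}\cdot T^n\cdot(T/B)^{2N_0}$, which is too lossy. I would therefore not use Cauchy--Schwarz globally. Instead, I would cover $B^n(y,CT)$, together with the dyadic annuli $|u|\sim 2^jT$ beyond it, by $\lesssim(2^jT/B)^n$ balls of radius $B$. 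On each such ball, Cauchy--Schwarz together with the lower bound $w\gtrsim1$ there gives $\int_{\text{ball}}G\lesssim B^{n/2}\cdot B^{n/2}M_{T,r}(f)$.

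Summing, the $j$-th annulus contributes
\begin{equation*}
T^{-n/2}2^{-jN}(2^jT/B)^nB^nM_{T,r}(f)=2^{-j(N-n)}T^{n/2}M_{T,r}(f),
\end{equation*}
and summing over $j$ gives \eqref{eq:coefficient-square}. This is the same covering argument used in the proof of \eqref{eq:M-monotonicity}, and it replaces the bad Cauchy--Schwarz estimate above.

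\emph{Step 4: the cap estimate \eqref{eq:cap-square}.} Restricting to $k\in\mathcal K$ only decreases the sum, so it suffices to treat all $k$. The function $(\varphi_\theta e(t|\cdot|^2)\widehat{f_k})^\vee$ equals $K_t^\theta\ast f_k$, where $K_t^\theta$ is the inverse Fourier transform of $\varphi_\theta e(t|\cdot|^2)$. By \eqref{eq:kernel-cap} with $r^{1/2}$ scaling, we get
\begin{equation*}
|K_t^\theta(z)|\lesssim r^{-n/2}\big(1+|z+2t\xi_\theta|/r^{1/2}\big)^{-N}
\end{equation*}
when $T\lesssim r$. In general $t$ may exceed $r$. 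In that case I would instead write $\varphi_\theta=\varphi_\theta\chi$, so that $K_t^\theta=(\varphi_\theta)^\vee\ast K_t$. Then $|K_t^\theta|\lesssim|K_t|\ast|\varphi_\theta^\vee|$, which again satisfies the bound $T^{-n/2}(1+|z|/T)^{-N}$, because $|\varphi_\theta^\vee|$ is an $L^1$-normalized bump at scale $r^{-1/2}$ in frequency, hence at scale $r^{1/2}\leq T$ in space. Steps 2 and 3 then apply verbatim.
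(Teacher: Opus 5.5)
Your proof is correct. It differs from the paper's mainly in how it handles the mismatch between the kernel scale $T$ and the averaging scale $T/r^{1/2}$.

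\textbf{First estimate.} The paper never works with a weight at scale $T/r^{1/2}$ directly. It applies Cauchy--Schwarz with respect to the measure $|K_t(y-z)|\,\rmd z$ and uses $\|K_t\|_{L^1}|K_t(y-z)|\lesssim w_{T,y}(z)$ to get
\begin{equation*}
\sum_k|\mathcal{E}_n(t)f_k(y)|^2\le\|K_t\|_{L^1}\int_{\mathbb{R}^n}|K_t(y-z)|\sum_k|f_k(z)|^2\,\rmd z\lesssim\int_{\mathbb{R}^n} w_{T,y}(z)\sum_k|f_k(z)|^2\,\rmd z\le T^nM_{T,1}(f)^2 .
\end{equation*}
It then cites the already proved monotonicity \eqref{eq:M-monotonicity} to replace $M_{T,1}$ by $M_{T,r}$. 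Because the weight lives at the kernel's own scale $T$, the obstacle you flag in Step 2 never arises.

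Your route is Minkowski down to $\int|K_t(y-z)|G(z)\,\rmd z$, then covering dyadic annuli by balls of radius $T/r^{1/2}$ with local Cauchy--Schwarz. This effectively inlines the covering argument behind \eqref{eq:M-monotonicity}. It is equally valid and reaches scale $T/r^{1/2}$ in one pass, at the cost of redoing that bookkeeping. Your Step 2 is simply a discarded detour.

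\textbf{Cap estimate.} For \eqref{eq:cap-square} the paper writes $(\varphi_\theta e(t|\cdot|^2)\widehat{f_k})^\vee=\varphi_\theta^\vee*\mathcal{E}_n(t)f_k$. It then applies Minkowski with $\|\varphi_\theta^\vee\|_{L^1}\lesssim1$ on top of the first estimate \eqref{eq:coefficient-square}. Your pointwise bound on $|\varphi_\theta^\vee|*|K_t|$ is also correct, since the decay of $\varphi_\theta^\vee$ at scale $r^{1/2}\le T$ is what makes it work, but it is more than needed. The preliminary remark about \eqref{eq:kernel-cap} can be dropped.

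\textbf{Minor point.} The identity $(\varphi_\theta^\vee*K_t)*f_k=(\varphi_\theta e(t|\cdot|^2)\widehat{f_k})^\vee$ only requires $\chi=1$ on $\supp\widehat{f_k}$. Your claim $\varphi_\theta\chi=\varphi_\theta$ can fail for caps near $|\xi|=2$ when $\chi=1$ only on $B^n(0,2)$, but it is not actually used.
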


\begin{proof}
Choose $\chi\in C_c^\infty(\mathbb{R}^n)$ equal to one on the Fourier supports of $f_k$, and set
$K_t(y)=\int_{\mathbb{R}^n}\chi(\xi)e(y\cdot\xi+t|\xi|^2)\rmd\xi$.
By Corollary \ref{cor:kernel}, for $t\sim T\geq1$, we have
\begin{equation*}
\|K_t\|_{L^1}\lesssim T^{n/2},
\qquad |K_t(y)|\lesssim \frac{1}{T^{n/2}(1+|y|/T)^{2N_0}}.
\end{equation*}
By H\"older's inequality and Minkowski's inequality, we obtain
\begin{align*}
\sum_k|\mathcal{E}_n(t)f_k(y)|^2
&=\sum_{k}\left|\int_{\mathbb{R}^{n}}f_k(z)K_t(y-z)\rmd z\right|^{2}\\
&\leq \|K_t\|_{L^1} \int_{\mathbb{R}^n} |K_t(y-z)|\sum_k|f_k(z)|^2\rmd z\\
&\lesssim \int_{\mathbb{R}^n} w_{T,y}(z)\sum_k|f_k(z)|^2\rmd z\\
&\leq T^nM_{T,1}(f)^2\\
&\lesssim T^nM_{T,r}(f)^2,
\end{align*}
where the last inequality follows from \eqref{eq:M-monotonicity}. This proves \eqref{eq:coefficient-square}.

For \eqref{eq:cap-square}, by the frequency partition, we have $\|\varphi_\theta^\vee\|_{L^1}\lesssim 1$. Since
\begin{equation*}
\left(\varphi_\theta e(t|\cdot|^2)\widehat{f_k}\right)^\vee=\varphi_\theta^\vee*\mathcal{E}_n(t)f_k,
\end{equation*}
Minkowski's inequality gives
\begin{align*}
\left(\sum_{k\in\mathcal K}\left|\left(\varphi_\theta e(t|\cdot|^2)\widehat{f_k}\right)^\vee(y)\right|^2\right)^{1/2}
&\leq \int_{\mathbb{R}^n}|\varphi_\theta^\vee(y-z)|\left(\sum_{k\in\mathcal K}|\mathcal{E}_n(t)f_k(z)|^2\right)^{1/2}\rmd z\\
&\leq \|\varphi_\theta^\vee\|_{L^1}\sup_{z\in\mathbb{R}^n}\left(\sum_k|\mathcal{E}_n(t)f_k(z)|^2\right)^{1/2}\\
&\lesssim T^{n/2}M_{T,r}(f),
\end{align*}
which proves \eqref{eq:cap-square}.
\end{proof}

The wave packets satisfy the following decay estimates.

\begin{lemma}\label{lem:packet-decay}
For every $\iota=(k,\theta,v)$ and $|t-a|\lesssim r$, we have
\begin{equation}\label{eq:packet-decay}
|E_nf_\iota^{(r,a)}(y,t-a)|
\lesssim \frac{T^{n/2}M_{T,r}(f)} {\left( 1+{|y-v+2(t-a)\xi_\theta|}/{r^{1/2}} \right)^N}
\end{equation}
for every $N\geq0$.
Furthermore, for every $t\in\mathbb{R}$,
\begin{equation}\label{eq:packet-infty}
\|E_nf_\iota^{(r,a)}(\cdot,t-a)\|_{L^\infty}
\lesssim \frac{\|f_\iota^{(r,a)}\|_{L^2}}{r^{n/4}}.
\end{equation}
\end{lemma}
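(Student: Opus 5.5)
To prove \eqref{eq:packet-decay} and \eqref{eq:packet-infty}, we write the packet explicitly as an oscillatory integral. By definition,
\begin{equation*}
f_\iota^{(r,a)}=\widehat{g_{k,\theta}\psi_v},
\qquad g_{k,\theta}:=\left(e(a|\cdot|^2)\varphi_\theta\widehat{f_k}\right)^\vee,
\end{equation*}
so that
\begin{equation*}
E_nf_\iota^{(r,a)}(y,t-a)=\int_{\mathbb R^n} g_{k,\theta}(z)\psi_v(z)\,K^\theta_{t-a}(y-z)\,\rmd z,
\end{equation*}
where
\begin{equation*}
K^\theta_{u}(z)=\int_{\mathbb{R}^n}\tilde\chi_\theta(\omega)e(z\cdot\omega+u|\omega|^2)\rmd\omega .
\end{equation*}
Here $\tilde\chi_\theta$ is a smooth bump equal to one on $B^n(\xi_\theta,(2+c_1)r^{-1/2})$, supported in a fixed enlargement, and of the form $\chi(r^{1/2}(\omega-\xi_\theta))$. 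We may insert $\tilde\chi_\theta$ because it equals one on the support of $f_\iota^{(r,a)}$. Since $|u|=|t-a|\lesssim r$, the estimate \eqref{eq:kernel-cap} of Corollary \ref{cor:kernel} applies with $d=n$ and $\omega_0=\xi_\theta$, giving
\begin{equation*}
|K^\theta_u(z)|\lesssim r^{-n/2}\bigl(1+|z+2u\xi_\theta|/r^{1/2}\bigr)^{-N}.
\end{equation*}

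\textbf{Step 1: the pointwise decay \eqref{eq:packet-decay}.} First bound $|g_{k,\theta}(z)|$ uniformly. Since
\begin{equation*}
g_{k,\theta}=\varphi_\theta^\vee*\mathcal{E}_n(a)f_k \quad\text{and}\quad a\sim T,
\end{equation*}
the estimate \eqref{eq:cap-square} with $\mathcal K=\{k\}$ yields $\|g_{k,\theta}\|_{L^\infty}\lesssim T^{n/2}M_{T,r}(f)$. Then
\begin{equation*}
|E_nf_\iota^{(r,a)}(y,t-a)|\lesssim T^{n/2}M_{T,r}(f)\int_{\mathbb R^n}\frac{r^{-n/2}}{(1+|z-v|/r^{1/2})^{N}\,(1+|y-z+2(t-a)\xi_\theta|/r^{1/2})^{N}}\rmd z,
\end{equation*}
using \eqref{eq:wp-decay} for $\psi_v$. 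The standard convolution estimate for two Schwartz-type bumps at scale $r^{1/2}$, obtained by splitting according to which factor is at least half the distance, together with Peetre's inequality
\begin{equation*}
(1+|A+B|)^{-N}\le (1+|A|)^{-N}(1+|B|)^{N},
\end{equation*}
bounds the integral by
\begin{equation*}
\bigl(1+|y-v+2(t-a)\xi_\theta|/r^{1/2}\bigr)^{-N+n+1}.
\end{equation*}
Renaming $N$ then gives \eqref{eq:packet-decay}.

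\textbf{Step 2: the uniform bound \eqref{eq:packet-infty}, valid for all $t$.} Here we use only Fourier support. The function $f_\iota^{(r,a)}$ is supported in a ball of radius $\sim r^{-1/2}$, so by Cauchy--Schwarz,
\begin{equation*}
|E_nf_\iota^{(r,a)}(y,t-a)|\le \int_{\mathbb R^n}|f_\iota^{(r,a)}(\xi)|\,\rmd\xi\le |\supp f_\iota^{(r,a)}|^{1/2}\,\|f_\iota^{(r,a)}\|_{L^2}\lesssim r^{-n/4}\|f_\iota^{(r,a)}\|_{L^2},
\end{equation*}
uniformly in $y$ and $t$. This proves \eqref{eq:packet-infty} directly.

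\textbf{Main obstacle.} The only delicate point is Step 1. We must check that the kernel bound of Corollary \ref{cor:kernel} applies uniformly over $|u|\lesssim r$, with the center shift $2u\xi_\theta$ producing the tube direction. We must also check that the $L^\infty$ bound on $g_{k,\theta}$ can be taken in terms of $M_{T,r}(f)$ rather than $\|f\|_{L^\infty}$. The second requirement is exactly what \eqref{eq:cap-square} supplies, since $a\sim T$ places us in its range of validity. The convolution of the two decaying bumps is then routine.
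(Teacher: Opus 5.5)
Your proposal is correct and follows essentially the same route as the paper's proof. For \eqref{eq:packet-decay} you write $(f_\iota^{(r,a)})^\vee=\psi_v\,(\varphi_\theta e(a|\cdot|^2)\widehat{f_k})^\vee$, bound it pointwise via \eqref{eq:cap-square} (at $t=a\sim T$) and \eqref{eq:wp-decay}, and convolve with the cap-localized kernel controlled by \eqref{eq:kernel-cap}. For \eqref{eq:packet-infty} you use only the Fourier support of $f_\iota^{(r,a)}$ together with Cauchy--Schwarz.
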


\begin{proof}
By the definition of $f_\iota^{(r,a)}$, \eqref{eq:cap-square} and \eqref{eq:wp-decay},
we have
\begin{equation*}
|(f_\iota^{(r,a)})^\vee(y)|
= |\psi_v(y)|\left|\left(\varphi_\theta e(a|\cdot|^2)\widehat{f_k}\right)^\vee(y)\right|
\lesssim\frac{T^{n/2}M_{T,r}(f)}{\left(1+|y-v|/r^{1/2}\right)^N},
\end{equation*}
for every integer $N\geq0$. By the Fourier support of $f_\iota^{(r,a)}$, choose $\chi\in C_c^\infty(\mathbb{R}^n)$ equal to one on $B^n(0,2+c_1)$
and supported in a fixed larger ball. Then
\begin{equation*}
\chi(r^{1/2}(\xi-\xi_\theta))=1
\quad\text{on }\supp f_\iota^{(r,a)}.
\end{equation*}
Define $K(y, t-a)=\int_{\mathbb{R}^{n}}\chi(r^{1/2}(\xi-\xi_\theta))e(y\cdot \xi+(t-a)|\xi|^{2})\rmd \xi$.
Choosing $N_1> N+n$, Corollary \ref{cor:kernel} \eqref{eq:kernel-cap} gives
\begin{align*}
&|E_nf_\iota^{(r,a)}(y,t-a)|\\
&\qquad=\left|\int_{\mathbb{R}^{n}} (f_\iota^{(r,a)})^{\vee}(z)K(y-z,t-a)\rmd z\right|\\
&\qquad\lesssim \frac{1}{r^{n/2}}\int_{\mathbb{R}^n}\frac{|(f_\iota^{(r,a)})^\vee(z)|}
{\left(1+|y-z+2(t-a)\xi_\theta|/r^{1/2}\right)^{N_1}}\rmd z\\
&\qquad\lesssim \frac{T^{n/2}M_{T,r}(f)}{r^{n/2}} \int_{\mathbb{R}^n} \frac{\rmd z}{\left( 1+|y-z+2(t-a)\xi_\theta|/r^{1/2}\right)^{N_1}
\left(1+|z-v|/r^{1/2}\right)^{N_1}}\\
&\qquad\lesssim \frac{T^{n/2}M_{T,r}(f)}{\left(1+|y-v+2(t-a)\xi_\theta|/r^{1/2}\right)^N},
\end{align*}
which proves \eqref{eq:packet-decay}.

For \eqref{eq:packet-infty}, the Fourier support of $f_\iota^{(r,a)}$ and H\"older's inequality give, for every $t\in\mathbb{R}$,
\begin{align*}
|E_nf_\iota^{(r,a)}(y,t-a)|
&\leq \int_{\supp f_\iota^{(r,a)}} |f_\iota^{(r,a)}(\xi)|\rmd\xi\\
&\leq |\supp f_\iota^{(r,a)}|^{1/2}\|f_\iota^{(r,a)}\|_{L^2}\\
&\lesssim \frac{\|f_\iota^{(r,a)}\|_{L^2}}{r^{n/4}}.
\end{align*}
Taking the supremum over $y$ proves \eqref{eq:packet-infty}.
\end{proof}

Finally, we obtain an estimate which will also be used at the smallest scales.

\begin{corollary}\label{cor:terminal}
For $t\sim T$, $1\leq r\leq T$
and every $N\geq0$, we have
\begin{equation}\label{eq:terminal-pointwise}
|E_{m+n}F_\tau[f](x,y,t)|
\lesssim \frac{(L^mT^n)^{1/2}M_{T,r}(f)} {(1+\tau|x|)^N}.
\end{equation}
Consequently, for every $B_r(y_0,a)$ as above and $p\geq2$,
\begin{equation}\label{eq:terminal}
\int_{\mathbb{R}^m\times B_r(y_0,a)}|E_{m+n}F_\tau[f]|^p\rmd x\rmd y\rmd t
\lesssim r(L^mT^n)^{p/2-1} \|F_\tau[f]\|_{L^2}^2 M_{T,r}(f)^{p-2}.
\end{equation}
\end{corollary}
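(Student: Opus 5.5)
The pointwise bound \eqref{eq:terminal-pointwise} comes from combining the representation \eqref{eq:Ftau-extension} with the Cauchy--Schwarz inequality in $k$. For fixed $(x,y,t)$ with $t\sim T$, write
\begin{equation*}
|E_{m+n}F_\tau[f](x,y,t)|\leq \Big(\sum_{|k|\leq1}|E_m[\rho_\tau(\cdot-k)](x,t)|^2\Big)^{1/2}\Big(\sum_k|\mathcal E_n(t)f_k(y)|^2\Big)^{1/2}.
\end{equation*}
The second factor is at most $T^{n/2}M_{T,r}(f)$ by \eqref{eq:coefficient-square} in Lemma \ref{lem:coefficient}. For the first factor, \eqref{eq:Ftau-decay} applies since $t\lesssim T\leq R$, giving a bound $(1+\tau|x|)^{-N}$ for each term. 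There are $\#\{k\in L^{-1}\mathbb Z^m:|k|\leq1\}\lesssim L^m$ terms, so the first factor is $\lesssim L^{m/2}(1+\tau|x|)^{-N}$. Multiplying the two factors gives \eqref{eq:terminal-pointwise}.

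For \eqref{eq:terminal}, write $|E_{m+n}F_\tau|^p=|E_{m+n}F_\tau|^{2}|E_{m+n}F_\tau|^{p-2}$. Bound the factor $|E_{m+n}F_\tau|^{p-2}$ on $\mathbb R^m\times B_r(y_0,a)$ by $((L^mT^n)^{1/2}M_{T,r}(f))^{p-2}$, using \eqref{eq:terminal-pointwise} with $N=0$; this is legitimate because $t\sim T$ on the ball. What remains is
\begin{equation*}
\int_{\mathbb R^m\times B_r(y_0,a)}|E_{m+n}F_\tau|^2\leq \int_{|t-a|\leq r}\int_{\mathbb R^{m+n}}|E_{m+n}F_\tau(x,y,t)|^2\,\rmd x\,\rmd y\,\rmd t .
\end{equation*}
By Plancherel in $(x,y)$ for each fixed $t$, the extension is unitary: $\|E_{m+n}F_\tau(\cdot,t)\|_{L^2(\mathbb R^{m+n})}=\|F_\tau\|_{L^2}$. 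Integrating over a time interval of length $\lesssim r$ therefore gives $\lesssim r\|F_\tau[f]\|_{L^2}^2$. Combining the two bounds yields $r(L^mT^n)^{p/2-1}\|F_\tau[f]\|_{L^2}^2M_{T,r}(f)^{p-2}$, which is \eqref{eq:terminal}.

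No real obstacle is expected. The only points to check are bookkeeping: the count of $k$ uses the support condition $|k|\leq1$; \eqref{eq:Ftau-decay} requires $|t|\lesssim R$, which holds since $T\leq R$; and \eqref{eq:coefficient-square} requires $t\sim T$ and $1\leq r\leq T$, both of which hold on the ball by hypothesis.
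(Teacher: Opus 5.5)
Your proposal is correct and follows the paper's proof essentially step for step. For the pointwise bound you apply Cauchy--Schwarz in $k$ to \eqref{eq:Ftau-extension}, bound the first factor by \eqref{eq:Ftau-decay} together with the count $\#\{k:|k|\leq1\}\lesssim L^m$, and bound the second by \eqref{eq:coefficient-square}. For \eqref{eq:terminal} you split $|E_{m+n}F_\tau|^{p}=|E_{m+n}F_\tau|^{p-2}|E_{m+n}F_\tau|^{2}$, use the $N=0$ bound on the first factor, and use spatial $L^2$ unitarity over a time interval of length at most $2r$ on the second.
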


\begin{proof}
Since $\#\{k:|k|\leq1\}\lesssim L^m$, by \eqref{eq:Ftau-decay} we obtain
\begin{equation*}
\left(\sum_k|E_m[\rho_\tau(\cdot-k)](x,t)|^2\right)^{1/2}
\lesssim \frac{L^{m/2}}{(1+\tau|x|)^N}.
\end{equation*}
Combining this with \eqref{eq:Ftau-extension}, Cauchy--Schwarz's inequality and \eqref{eq:coefficient-square} gives that
\begin{align*}
|E_{m+n}F_\tau[f](x,y,t)|
&\leq \left(\sum_k|E_m[\rho_\tau(\cdot-k)](x,t)|^2\right)^{1/2}\left(\sum_k|\mathcal{E}_n(t)f_k(y)|^2\right)^{1/2}\\
&\lesssim \frac{(L^mT^n)^{1/2}M_{T,r}(f)}{(1+\tau|x|)^N},
\end{align*}
which proves \eqref{eq:terminal-pointwise}.

For $p\geq2$, using \eqref{eq:terminal-pointwise} with $N=0$, we have
\begin{align*}
&\int_{\mathbb{R}^m\times B_r(y_0,a)}|E_{m+n}F_\tau[f]|^p\rmd x\rmd y\rmd t\\
&\qquad\lesssim(L^mT^n)^{p/2-1}M_{T,r}(f)^{p-2}\int_{\mathbb{R}^m\times B_r(y_0,a)}|E_{m+n}F_\tau[f]|^2\rmd x\rmd y\rmd t.
\end{align*}
Since the projection of $B_r(y_0,a)$ onto the time axis has length at most $2r$, spatial $L^2$ unitarity gives
\begin{equation*}
\int_{\mathbb{R}^m\times B_r(y_0,a)}|E_{m+n}F_\tau[f]|^2\rmd x\rmd y\rmd t
\leq 2r\|F_\tau[f]\|_{L^2}^2.
\end{equation*}
Combining this with the previous estimate, we obtain \eqref{eq:terminal}.
\end{proof}

\section{Refined Decoupling}\label{sec:refined-decoupling}

In 2015, Bourgain and Demeter \cite{BD} proved the sharp $\ell^2$ decoupling theorem for compact $C^2$ hypersurfaces with positive definite second fundamental form.
Let $S\subset\mathbb{R}^{d+1}$ be such a hypersurface, and suppose that $\widehat g$ is supported in the $r^{-1}$-neighborhood of $S$,
where $r\geq1$. Write $g=\sum_\theta g_\theta$, where $\theta$ ranges over finitely overlapping caps of diameter $r^{-1/2}$ on $S$,
and each $\widehat{g_\theta}$ is supported in the $r^{-1}$-neighborhood of $\theta$.
Then, for $2\leq p\leq 2(d+2)/d$ and every $\varepsilon>0$,
\begin{equation*}
\|g\|_{L^p}
\lesssim r^\varepsilon\left(\sum_\theta\|g_\theta\|_{L^p}^2\right)^{1/2}.
\end{equation*}

This estimate reflects the Fourier support information of the functions $g_\theta$,
but does not distinguish how they overlap in physical space. If their physical supports are disjoint,
an $\ell^p$ summation is available instead of the $\ell^2$ summation above.
This naturally raises the question of whether a stronger decoupling estimate can be obtained when the overlap in physical space is controlled.

Guth, Iosevich, Ou and Wang \cite{GIOW} established such a refinement and noted that the same result was independently obtained by Du and Zhang.
Related refined Strichartz estimates also appear in the work of Du and Zhang \cite{DZ}.
In this section, we establish a corresponding refined decoupling estimate adapted to the wave packet decomposition in Section \ref{sec:wave-packets}.
Throughout this section, write $d=m+n$ and $p_\ast=2(d+2)/d$.

We begin with the refined decoupling theorem of Guth, Iosevich, Ou and Wang \cite[Theorem 4.2]{GIOW}. We first recall their notation.

Let $S\subset\mathbb{R}^{d+1}$ be a strictly convex $C^2$ hypersurface with Gaussian curvature $\sim 1$.
For $r\geq1$, decompose the $r^{-1}$-neighborhood of $S$ into blocks $\vartheta$ of dimensions $r^{-1/2}\times \cdots \times r^{-1/2}\times r^{-1}$.
For a sufficiently small $\delta>0$, let $\mathbb{T}_\vartheta$ be a finitely overlapping cover of $\mathbb{R}^{d+1}$
by tubes $T$ of length $\sim r^{1+\delta}$ and radius $\sim r^{(1+\delta)/2}$, whose long axes are normal to $S$ at $\vartheta$.
Write $\mathbb{T}=\bigcup_\vartheta\mathbb{T}_\vartheta$.
Each $T\in\mathbb{T}$ belongs to $\mathbb{T}_\vartheta$ for a unique $\vartheta$, denoted by $\vartheta(T)$.

We say that a function $g$ is microlocalized to $(T,\vartheta(T))$ if $g$ is essentially supported in $2T$ and $\widehat g$ is essentially
supported in $2\vartheta(T)$.

\begin{theorem}\label{thm:GIOW}
Let $2\leq p\leq p_\ast$. For every $\varepsilon>0$, choose $0<\delta\ll\varepsilon$.
Let $\mathbb{W}\subset \{T\in\mathbb{T}:T\cap B_r\neq\varnothing\}$ be a finite collection and set $W=\#\mathbb{W}$.
Suppose that $g=\sum_{T\in\mathbb{W}}g_T$, where each $g_T$ is microlocalized to $(T,\vartheta(T))$.
Suppose that $\|g_T\|_{L^p}$ is roughly constant among all the $T\in \mathbb{W}$.
Let $Y$ be a union of $r^{1/2}$-cubes in $B_r$, each intersecting at most $M$ tubes $T\in \mathbb{W}$. Then
\begin{equation*}
\|g\|_{L^p(Y)}
\lesssim r^\varepsilon \left(\frac{M}{W}\right)^{1/2-1/p} \left(\sum_{T\in\mathbb{W}}\|g_T\|_{L^p}^2\right)^{1/2}.
\end{equation*}
\end{theorem}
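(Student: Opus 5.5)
The statement is the refined decoupling theorem of Guth, Iosevich, Ou and Wang, so the plan is to follow their induction-on-scales proof and invoke the Bourgain--Demeter $\ell^2$ decoupling theorem recalled above at each step. The broad structure is as follows.

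First, I would reduce to the case where all quantities are dyadically pigeonholed. Write $\|g_T\|_{L^p}\sim\beta$ for every $T\in\mathbb W$. By rescaling $g$ I may normalize $\beta=1$. The right-hand side is then
\begin{equation*}
r^\varepsilon (M/W)^{1/2-1/p}W^{1/2}=r^\varepsilon M^{1/2-1/p}W^{1/p}.
\end{equation*}
By Hölder interpolation between $p=2$ and $p=p_\ast$, it suffices to prove the endpoint $p=p_\ast$ together with the trivial case $p=2$. The case $p=2$ follows from almost orthogonality on each $r^{1/2}$-cube: the local $L^2$ norm is bounded by the sum of the $L^2$ norms of the at most $M$ packets that meet the cube. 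Interpolation is legitimate here because the factor $(M/W)^{1/2-1/p}$ is log-convex in $1/p$.

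The main step is an induction on the scale $r$, passing to an intermediate scale $r^{1/2}\le \rho=r^{1-\delta'}$. I would cover $B_r$ by balls $B_\rho$. On each ball, I would regroup the wave packets into coarser caps $\tau$ of radius $\rho^{-1/2}$ and apply Bourgain--Demeter decoupling, which loses only $\rho^{\varepsilon}$, from caps $\vartheta$ up to caps $\tau$. Inside each $\tau$, the function $g_\tau$ restricted to $B_\rho$ is, after parabolic rescaling, a sum of packets at the smaller scale $r/\rho$. To these I would apply the induction hypothesis, with new counts $M'$ and $W'$ obtained by further dyadic pigeonholing: the number of relevant $\tau$-tubes, the number of fine packets per $\tau$-tube, and the occupancy of the $r^{1/2}$-cubes of $Y$. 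Multiplying the bounds requires the inequalities $M\gtrsim M_{\text{fine}}M_{\text{coarse}}$ and $W\lesssim W_{\text{fine}}W_{\text{coarse}}$ up to $r^{O(\delta)}$ losses. Because $p\le p_\ast$, the $\ell^2$ sum over caps combined with Hölder over the number of caps reproduces the factor $(M/W)^{1/2-1/p}$. The small parameter $\delta$ in the tube dimensions $r^{1+\delta}$ absorbs the tails from the essential supports.

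I expect the main obstacle to be the bookkeeping of the multiplicative counting. Specifically, I must show that after pigeonholing, the cube occupancy $M$ at scale $r$ factors as the product of the occupancy of coarse $\rho$-tubes and the occupancy of the rescaled fine packets. The parabolic rescaling maps the fine packets inside a $\tau$-tube to packets at scale $r/\rho$, and I need to verify that $r^{1/2}$-cubes correspond correctly to cubes at the rescaled scale. Only after these pigeonholings are uniform does the induction close with an $r^\varepsilon$ loss, using $\delta\ll\varepsilon$ and the bounded number of induction steps. For the purposes of this paper, I would cite \cite[Theorem 4.2]{GIOW} for this bookkeeping rather than reproduce it.
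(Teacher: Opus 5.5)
Your final move, citing \cite[Theorem 4.2]{GIOW}, is exactly what the paper does. Theorem~\ref{thm:GIOW} is not proved in the paper; it is quoted from GIOW and used as a black box, and only at the endpoint $p=p_\ast$, in \eqref{eq:refined-tube-bound}. So your proposal stands on the citation. The induction-on-scales outline around it is a heuristic that you never carry out, and it is not needed.

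One step of that outline is wrong as written: the reduction to $p\in\{2,p_\ast\}$ by H\"older. Write $1/p=(1-\theta)/2+\theta/p_\ast$ and combine $\|g\|_{L^p(Y)}\le\|g\|_{L^2(Y)}^{1-\theta}\|g\|_{L^{p_\ast}(Y)}^{\theta}$ with the two endpoint bounds. You would then need
$\bigl(\sum_T\|g_T\|_{L^2}^2\bigr)^{(1-\theta)/2}\bigl(\sum_T\|g_T\|_{L^{p_\ast}}^2\bigr)^{\theta/2}\lesssim\bigl(\sum_T\|g_T\|_{L^p}^2\bigr)^{1/2}$.
Already for a single $T$ this is the reverse of H\"older's inequality $\|g_T\|_{L^p}\le\|g_T\|_{L^2}^{1-\theta}\|g_T\|_{L^{p_\ast}}^{\theta}$. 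In addition, the $p_\ast$ endpoint requires $\|g_T\|_{L^{p_\ast}}$ to be roughly constant, while the hypothesis gives rough constancy only for the given $p$. The log-convexity of $(M/W)^{1/2-1/p}$ addresses neither issue.

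The reduction can be repaired, but it takes extra work:
\begin{itemize}
\item a reverse H\"older inequality for each $g_T$ with an $r^{O(\delta)}$ loss, using local constancy at the scale $r^{1/2}\times r$ dual to $\vartheta$, the fact that $T$ contains only $r^{O(\delta)}$ such boxes, and tails that decay rapidly in every $L^q$;
\item dyadic pigeonholing of $\|g_T\|_{L^2}$ and $\|g_T\|_{L^{p_\ast}}$, using that passing to a subcollection can only decrease $M^{1/2-1/p}W^{1/p}$.
\end{itemize}
None of this is necessary, since \cite[Theorem 4.2]{GIOW} is already stated for every $2\le p\le p_\ast$.
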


We now establish the following refined decoupling estimate.

\begin{theorem}\label{thm:refined-decoupling}
Let $0<\varsigma<1$, $R>1$, $L=R^{1/2}$ and $0<\tau\leq c/R$. Fix a dyadic number $1\leq T\leq R$ and
a ball $B_r(y_0,a)\subset\mathbb{R}^{n+1}$ with $R^\varsigma\leq r\leq T$ and $a\sim T$, such that $t\sim T$ on its fixed enlargements.
Let $\mathcal W$ be a finite collection of labels $\iota=(k,\theta,v)$ at $(r,a)$, satisfying
\begin{equation*}
|v-y_0|\leq C_4r,
\qquad \bar\beta\leq\frac{\|f_\iota^{(r,a)}\|_{L^2}}{r^{n/4}}<2\bar\beta,
\qquad \iota\in\mathcal W,
\end{equation*}
where $C_4>0$ is a sufficiently large fixed constant.

Let $\mathcal Q$ be a finite family of $r^{1/2}$-balls with centers in $B_{C_4r}(y_0,a)$, and set $X=\cup_{B\in\mathcal Q}B$.
Fix a sufficiently large constant $C_{5}>0$ and $M\geq 1$, assume
\begin{equation*}
\sup_{B\in \mathcal{Q}}\#\{\iota\in \mathcal{W}: \Pi_\iota\cap C_{5}B\neq\varnothing\}\leq M
\end{equation*}
counting different labels separately even when their tubes coincide.

Then, for every $\varepsilon,N>0$, we have
\begin{equation}\label{eq:refined-decoupling}
\begin{aligned}
\int_{\mathbb{R}^m\times X}|E_{m+n}F_\tau[f_{\mathcal W}](x,y,t)|^{p_\ast}\rmd x\rmd y\rmd t
&\lesssim r^{1+\varepsilon}\left(\frac{L}{r^{1/2}}\right)^{2m/d}
(\bar\beta^2M)^{2/d}\frac{\|\rho\|_{L^2}^2}{\tau^m}\sum_{\iota\in\mathcal W}\|f_\iota^{(r,a)}\|_{L^2}^2\\
&\qquad+ \frac{1}{R^N}\|F_\tau[f]\|_{L^2}^2M_{T,r}(f)^{p_\ast-2}.
\end{aligned}
\end{equation}
\end{theorem}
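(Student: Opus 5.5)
The plan is to view $E_{m+n}F_\tau[f_{\mathcal W}]$ as a function on $\mathbb R^{d+1}$, $d=m+n$, whose Fourier support lies near the paraboloid, and to apply Theorem \ref{thm:GIOW} there. By \eqref{eq:wp-evolution}, each piece $E_m[\rho_\tau(\cdot-k)](x,t)E_nf_\iota^{(r,a)}(y,t-a)$ is too concentrated in the periodic frequency variable (scale $\tau\ll R^{-1}$) and too spread in $x$ (scale $\tau^{-1}$) to be a genuine GIOW wave packet. So I first localize $x$. Cover $\mathbb R^m$ by cubes $I$ of side $r$. On each $I\times B_r(y_0,a)$, group the labels with the same $\theta$ and with $k$ in a common $r^{-1/2}$-cube $\kappa\subset B^m(0,1)$. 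Each such cube contains about $(L/r^{1/2})^m$ points of $L^{-1}\mathbb Z^m$. Smoothly localize to $I$ with a weight of Fourier width $r^{-1}$. The pieces indexed by $(\kappa,\theta)$ then become functions whose Fourier transforms lie in $r^{-1}$-neighbourhoods of $r^{-1/2}$-caps $\kappa\times\theta$ of the paraboloid in $\mathbb R^{d+1}$.

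Next I decompose each $(\kappa,\theta)$ piece in physical space into tubes $\mathbb T$ of radius $r^{(1+\delta)/2}$ and length $r^{1+\delta}$. Their $y$-projections agree with the $\Pi_\iota$, since $C_1r^{1/2+\varepsilon_0}$ dominates $r^{(1+\delta)/2}$ when $\delta<2\varepsilon_0$. Their $x$-positions are indexed by $r^{1/2}$-cells of $I$. Schwartz tails beyond the $r^{1/2+\delta/2}$-neighbourhoods are bounded by Lemma \ref{lem:packet-decay}, \eqref{eq:terminal-pointwise} and $r\ge R^\varsigma$, which gives the $R^{-N}\|F_\tau[f]\|_{L^2}^2M_{T,r}(f)^{p_\ast-2}$ error.

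To make the GIOW tube norms roughly constant, I pigeonhole dyadically in their $L^{p_\ast}$ norms. I would compute each tube norm as an $\ell^2$ sum over $k\in\kappa$: the $x$-oscillations $e(k\cdot x)$ with $k$ spaced $L^{-1}\le r^{-1/2}$ apart are essentially orthogonal on $r^{1/2}$-cells. Combined with $\|E_nf_\iota\|_\infty\lesssim\bar\beta$ from \eqref{eq:packet-infty}, each GIOW tube then has $L^\infty$ norm at most about $\bar\beta$ times the square root of the number of $k$ in it. Only $O(\log R)$ dyadic classes occur, and tiny tubes are absorbed into the error term.

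For each cube $I\times B_r$, I then apply Theorem \ref{thm:GIOW} at $p=p_\ast$ with $Y=I\times X$. The multiplicity for an $r^{1/2}$-cube is at most $M$, the number of $(k,\theta,v)$ labels through $C_5B$, divided by the number of $k$ per GIOW tube. Writing $W$ for the number of GIOW tubes, I use
\[
\|g_T\|_{p_\ast}^{p_\ast}\sim r^{(d+1)/2}\|g_T\|_\infty^{p_\ast},\qquad \|g_T\|_2^2\sim r\|F\text{-piece}\|_{L^2}^2 .
\]
The factor $(M/W)^{p_\ast/2-1}=(M/W)^{2/d}$, combined with $(\sum_T\|g_T\|_{p_\ast}^2)^{p_\ast/2}$ and the constancy of $\|g_T\|$, should give
\[
r^{1+\varepsilon}\Big(\frac{L}{r^{1/2}}\Big)^{2m/d}(\bar\beta^2M)^{2/d}\sum_{\iota}\|\text{piece}\|_2^2 .
\]
Here the factor $(L/r^{1/2})^{2m/d}$ comes from the ratio between the $L^\infty$ normalization with $L^{m}r^{-m/2}$ points per $\kappa$ and the label count. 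Summing over $I$ via \eqref{eq:Ftau-orthogonality} should recover $\tau^{-m}\|\rho\|_2^2\sum_\iota\|f_\iota\|_2^2$.

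The main obstacle is the bookkeeping between labels $\iota$ and GIOW tubes. Several $k$ share one tube, so both the multiplicity $M$ and the $L^2$ masses must be transferred correctly, and this is exactly the source of the factor $(L/r^{1/2})^{2m/d}$. It requires a local orthogonality of the $k$-sum in $x$ at scale $r^{1/2}$ that is uniform in $t\sim T$. I would derive it from \eqref{eq:Ftau-orthogonality} restricted to cells, using \eqref{eq:Ftau-decay}, since $\tau R\lesssim1$ makes the $x$-profile essentially constant on $I$.
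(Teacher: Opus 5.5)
\textbf{The key orthogonality step is false.} Your argument hinges on the claim that the phases $e(k\cdot x)$, for $k\in\kappa\cap L^{-1}\mathbb Z^m$, are essentially orthogonal on $r^{1/2}$-cells. They are not.

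\emph{Why it fails.} All these frequencies lie in one $r^{-1/2}$-cube, which is inside the frequency uncertainty of an $r^{1/2}$-cell. Along an $r^{1/2}\times r$ tube through the cap $\kappa\times\theta$, the relative phases of $E_m[\rho_\tau(\cdot-k)](x,t)\approx e(k\cdot x+t|k|^2)\rho^\vee(\tau x)$ change by only $O(1)$. The identity \eqref{eq:Ftau-orthogonality} is orthogonality after integrating over all of $\mathbb R^m$, and it needs at least the period $L\ge r^{1/2}$. It cannot be localized to cells, nor even to your cubes $I$ when $r<L$.

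\emph{A counterexample.} Take $f_k(y)=e(-a|k|^2)g(y)$ for the $q\sim(L/r^{1/2})^m$ points of $\kappa\cap L^{-1}\mathbb Z^m$, and $f_k=0$ otherwise, with $g$ a single $(\theta,v)$ packet. Then $|E_{m+n}F_\tau[f_{\mathcal W}]|\sim q\bar\beta$, not $q^{1/2}\bar\beta$, on $r^{1/2}$-tubes through the translates of $L\mathbb Z^m$ in $|x|\lesssim\tau^{-1}$. Here $M\sim\#\mathcal W\sim q$. The left side of \eqref{eq:refined-decoupling} is then $\sim(\tau L)^{-m}r^{d/2+1}(q\bar\beta)^{p_\ast}$.

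Your $q^{1/2}\bar\beta$ amplitude together with multiplicity $M/q$ would give $r(\bar\beta^2M)^{2/d}\tau^{-m}q\bar\beta^2r^{n/2}$, with no $L$-factor at all, because $(M/q)^{2/d}(q\bar\beta^2)^{2/d}=(\bar\beta^2M)^{2/d}$. That bound is too small by exactly $q^{2/d}=(L/r^{1/2})^{2m/d}$. So your explanation of this factor is not coherent. The factor is precisely the price of the missing local orthogonality, and it is sharp.

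\textbf{The correct mechanism} (the paper's) accepts the amplitude loss and recovers part of it from the multiplicity.
\begin{itemize}
\item Group labels by $(\kappa,\theta,v)$ and first pigeonhole $q=\#\{k\in\kappa:(k,\theta,v)\in\mathcal W\}$ dyadically.
\item Cauchy--Schwarz with $|E_m[\rho_\tau(\cdot-k)]|\lesssim1$ gives $\|F_\alpha\|_{L^2}^2\lesssim q^2\bar\beta^2r^{d/2}$. Hence $\|g_\alpha\|_{L^{p_\ast}}^{p_\ast}\lesssim\|F_\alpha\|_{L^2}^{p_\ast}\lesssim r\,q^{4/d}\bar\beta^{4/d}\|F_\alpha\|_{L^2}^2$.
\item Each tube meeting an $r^{1/2}$-cube carries $q$ labels counted in $M$, so at most $M/q$ tubes meet the cube.
\item Combining, $(M/q)^{2/d}q^{4/d}=M^{2/d}q^{2/d}\le M^{2/d}(L/r^{1/2})^{2m/d}$.
\end{itemize}
Without fixing $q$, your phrase ``divided by the number of $k$ per GIOW tube'' is ill-defined. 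Your norm pigeonholing could substitute for the $q$-pigeonholing, but only by using $A\lesssim\min_T\|F_T\|_{L^2}\lesssim q_{\min}\bar\beta r^{d/4}$ together with multiplicity at most $M/q_{\min}$. The final summation does use orthogonality, but only in its global form \eqref{eq:Ftau-orthogonality} with $\sum_{v'}|\psi'_{v'}|^2\le1$; that part of your plan is fine.

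\textbf{Two smaller slips.}
\begin{itemize}
\item $\|g_T\|_{p_\ast}^{p_\ast}\sim r^{d/2+1}\|g_T\|_\infty^{p_\ast}$, using the tube volume, not $r^{(d+1)/2}$.
\item Tubes of radius $r^{(1+\delta)/2}$ anchored at $r^{1/2}$-cells overlap heavily for a fixed cap. Before Theorem \ref{thm:GIOW} applies you need a sparsification, such as the paper's congruence classes of positions modulo $K_r\sim r^{\varepsilon_0/2}$ and of caps modulo $K_0$.
\end{itemize}
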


\begin{proof}
We may assume $f\neq0$.

Partition $\mathbb R^m$ into cubes $\theta'$ of side length $r^{-1/2}$.
By the Fourier support of $f$, it suffices to consider cubes containing $k\in L^{-1}\mathbb{Z}^{m}$ with $|k|\leq 1$.
For each $(\theta',\theta,v)$, put $\mathcal{W}(\theta',\theta,v)=\{k\in\theta':(k,\theta,v)\in\mathcal W\}$.
For every dyadic number $q=2^{j}$, $j\geq 0$, let $\mathcal{W}_q$ consist of all labels $\iota=(k,\theta,v)\in \mathcal{W}$ for which
$q\leq \#\mathcal{W}(\theta',\theta,v)<2q$, where $\theta'$ is the unique cube containing $k$.
Since $r\leq T\leq L^{2}$, every nonempty class satisfies
\begin{equation*}
1\leq q
\leq \#\mathcal{W}(\theta',\theta,v)
\leq\#\{k: k\in \theta'\cap L^{-1}\mathbb{Z}^{m}\}
\leq (1+L/r^{1/2})^{m}\lesssim (L/r^{1/2})^{m}.
\end{equation*}
As $L=R^{1/2}$ and $r\geq R^{\varsigma}$, those the number of nonempty classes $\lesssim \log R\lesssim r^{\eta}$, for every $\eta>0$.
By triangle inequality,
\begin{equation*}
\left\|E_{m+n}F_\tau[f_{\mathcal{W}}]\right\|_{L^{p_\ast}(\mathbb{R}^{m}\times X)}\leq \sum_q \left\|E_{m+n}F_\tau[f_{\mathcal{W}_q}]\right\|_{L^{p_\ast}(\mathbb{R}^{m}\times X)}.
\end{equation*}
By the pigeonhole principle, there exists a class $\mathcal{W}_{q_0}$ such that
\begin{equation}\label{eq:refined-selection1}
\left\|E_{m+n}F_\tau[f_{\mathcal{W}}]\right\|_{L^{p_\ast}(\mathbb{R}^{m}\times X)}\lesssim r^{\eta} \left\|E_{m+n}F_\tau[f_{\mathcal{W}_{q_0}}]\right\|_{L^{p_\ast}(\mathbb{R}^{m}\times X)}.
\end{equation}
In what follows, we write $\mathcal{W}=\mathcal{W}_{q_0}$ and $q=q_0$ for simplicity.

Choose a nonnegative smooth partition of unity $\{\psi'_{v'}\}_{v'\in r^{1/2}\mathbb Z^m}$ on $\mathbb R^m$,
analogous to the spatial partition of unity, \eqref{eq:wp-decay}, such that
$\sum_{v'}\psi'_{v'}=1$, $\supp \widehat{\psi'_{v'}}\subset B^m(0,cr^{-1/2})$,
and, for every $N\geq0$,
\begin{equation*}
|\psi'_{v'}(x)|
\lesssim \frac{1}{(1+{|x-v'|}/{r^{1/2}})^{N}}.
\end{equation*}
For $\alpha=(\theta',\theta,v',v)$ with $\mathcal W(\theta',\theta,v)\neq\varnothing$, define
\begin{equation*}
F_\alpha(\cdot,\xi)
=\sum_{k\in\mathcal W(\theta',\theta,v)}e(a|\cdot|^2)\rho_\tau(\cdot-k)f_{k,\theta,v}^{(r,a)}(\xi)\ast \widehat{\psi'_{v'}}.
\end{equation*}
Its inverse Fourier transform is
\begin{equation*}
F_\alpha^\vee(x,y)
= \psi'_{v'}(x)\sum_{k\in\mathcal W(\theta',\theta,v)}E_m[\rho_\tau(\cdot-k)](x,a)(f_{k,\theta,v}^{(r,a)})^\vee(y).
\end{equation*}
Summing first over $v'$ and then using \eqref{eq:wp-continuous} gives
\begin{equation*}
\sum_\alpha F_\alpha(\eta,\xi)
=e\left(a(|\eta|^2+|\xi|^2)\right)F_\tau[f_{\mathcal W}](\eta,\xi),
\end{equation*}
and hence
\begin{equation*}
\sum_\alpha E_{m+n}F_\alpha(x,y,t-a)
=E_{m+n}F_\tau[f_{\mathcal W}](x,y,t).
\end{equation*}
Clearly, $\supp F_\alpha \subset B^m(c_{\theta'},Cr^{-1/2})\times B^n(\xi_\theta,Cr^{-1/2})$,
where $c_{\theta'}$ is the center of $\theta'$.

By Cauchy--Schwarz's inequality, \eqref{eq:Ftau-decay}, and \eqref{eq:cap-square}, for every $N_2\geq0$,
\begin{align*}
&\left|\sum_{k\in\mathcal W(\theta',\theta,v)}
E_m[\rho_\tau(\cdot-k)](x,a)\bigl(\varphi_\theta e(a|\cdot|^2)\widehat{f_k}\bigr)^\vee(y)\right|\\
&\qquad\leq\left(\sum_{k\in\mathcal W(\theta',\theta,v)}|E_m[\rho_\tau(\cdot-k)](x,a)|^2\right)^{1/2}
\left(\sum_{k\in\mathcal W(\theta',\theta,v)}|\bigl(\varphi_\theta e(a|\cdot|^2)\widehat{f_k}\bigr)^\vee(y)|^2\right)^{1/2}\\
&\qquad\lesssim q^{1/2}T^{n/2}M_{T,r}(f)(1+\tau|x|)^{-N_2}.
\end{align*}
Since $(f_{k,\theta,v}^{(r,a)})^\vee
=\psi_v(\varphi_\theta e(a|\cdot|^2)\widehat{f_k})^\vee$,
we combine the preceding bound with the decay of $\psi_v$ and $\psi'_{v'}$. Indeed, $\tau r^{1/2}\lesssim1$ implies
\begin{equation*}
1+\tau|v'|
\lesssim(1+\tau|x|)\left(1+\frac{|x-v'|}{r^{1/2}}\right).
\end{equation*}
Taking the decay order of $\psi_{v'}'$ sufficiently large in terms of $N_1, N_2$, we obtain, for arbitrary $N_1,N_2\geq0$,
\begin{equation}\label{eq:packet-initial-decay}
|F_\alpha^\vee(x,y)|
\lesssim \frac{q^{1/2}T^{n/2}M_{T,r}(f)}{(1+\tau|v'|)^{N_2}}
\frac{1}{\left(1+{|x-v'|}/{r^{1/2}}\right)^{N_1}}
\frac{1}{\left(1+{|y-v|}/{r^{1/2}}\right)^{N_1}}.
\end{equation}

Choose a fixed product cutoff $\chi\in C_c^\infty(\mathbb R^{m+n})$ equal to one on a sufficiently large product of balls about the origin.
Thus, $\chi\left(r^{1/2}(\eta-c_{\theta'}), r^{1/2}(\xi-\xi_\theta)\right)=1$ on $\supp F_\alpha$.
For $u\in\mathbb R$, define
\begin{equation*}
K_t(x,y)=\int_{\mathbb R^{m+n}}
\chi\bigl(r^{1/2}(\eta-c_{\theta'}),
 r^{1/2}(\xi-\xi_\theta)\bigr)
e\bigl(x\cdot\eta+y\cdot\xi+t(|\eta|^2+|\xi|^2)\bigr)
\rmd\eta\rmd\xi.
\end{equation*}
Then $E_{m+n}F_\alpha(\cdot,\cdot,t)=K_t*F_\alpha^\vee$.
For $|t|\leq Cr$, applying Corollary \ref{cor:kernel}, for every $N_1\geq0$,
\begin{equation*}
|K_t(x,y)|
\lesssim \frac{1}{r^{(m+n)/2}}
\frac{1}{\left(1+{|x+2tc_{\theta'}|}/{r^{1/2}}\right)^{N_1}}
\frac{1}{\left(1+{|y+2t\xi_\theta|}/{r^{1/2}}\right)^{N_1}}.
\end{equation*}
Thus, by \eqref{eq:packet-initial-decay} and the above estimate, we have for every $N_1\geq0$,
\begin{equation}\label{eq:packet-propagation}
\begin{aligned}
|E_{m+n}F_\alpha(x,y,t)|
&=\left|\int_{\mathbb{R}^{m+n}}F^{\vee}_\alpha(\tilde{x},\tilde{y})K_t(x-\tilde{x},y-\tilde{y})\rmd \tilde{x}\rmd \tilde{y}\right|\\
&\lesssim \frac{q^{1/2}T^{n/2}M_{T,r}(f)}{(1+\tau |v'|)^{N_1}}\frac{1}{r^{(m+n)/2}}\\
&\quad\times \int_{\mathbb{R}^{m}} \frac{1}{\left(1+|\tilde{x}-v'|/r^{1/2}\right)^{N_1+m+1}}\frac{1}{\left(1+|x-\tilde{x}+2tc_{\theta'}|/r^{1/2}\right)^{N_1+m+1}}\rmd \tilde{x}\\
&\quad\times \int_{\mathbb{R}^{n}}
\frac{1}{\left(1+|\tilde{y}-v|/r^{1/2}\right)^{N_1+n+1}}\frac{1}{\left(1+|y-\tilde{y}+2t\xi_\theta|/r^{1/2}\right)^{N_1+n+1}}\rmd \tilde{y}\\
&\lesssim \frac{q^{1/2}T^{n/2}M_{T,r}(f)}{(1+\tau |v'|)^{N_1}}\\
&\quad\times
\frac{1}{\left(1+{|x-v'+2tc_{\theta'}|}/{r^{1/2}}\right)^{N_1}}
\frac{1}{\left(1+{|y-v+2t\xi_\theta|}/{r^{1/2}}\right)^{N_1}}.
\end{aligned}
\end{equation}

Partition $\mathbb R^m$ into cubes $U$ of side length $r$, with centers $x_U$. Note that $X\subset B_{(C_4+1)r(y_0,a)}$.
We can choose a sufficiently large fixed $C$ such that the region $U\times X$ lies in a ball of radius $Cr$ in $\mathbb R^{d+1}$.

We consider first the indices $\alpha$ with $|v'-x_U|>Cr$.
For $x\in U$ and $(y,t)\in X$, we have
\begin{equation*}
|x-v'+2(t-a)c_{\theta'}|\geq |v'-x_U|-|x-x_U|-2 \sup_{\theta'}|c_\theta'||t-a|\gtrsim |v-x_U|.
\end{equation*}
If $N_1>N_2+m$, $\tau r\lesssim 1$ yields
\begin{align*}
&\sum_{\substack{v'\in r^{1/2}\mathbb{Z}^{m}\\|v'-x_U|>Cr}}\frac{1}{(1+\tau|v'|)^{N}}\frac{1}{(1+{|x-v'+2(t-a)c_{\theta'}|}/{r^{1/2}})^{N_1}}\\
&\qquad \lesssim \frac{1}{(1+\tau|x_U|)^{N_2}}\sum_{\substack{v'\in r^{1/2}\mathbb{Z}^{m}\\|v'-x_U|>Cr}}\frac{1}{(1+|v'-x_U|/r^{1/2})^{N_1-N_2}}\\
&\qquad \lesssim \frac{1}{r^{(N_1-N-m)/2}}\frac{1}{(1+\tau|x_U|)^{N}}.
\end{align*}
Since the number of pairs of $(\theta', \theta, v)$ is at most the fixed power of $R$, and $q\lesssim R^{m/2}$,
by $r\geq R^{\varsigma}$, choosing $N_2$ sufficiently large, we obtain that
\begin{equation*}
\left|\sum_{\alpha:\,|v'-x_U|>Cr}E_{m+n}F_\alpha(x,y,t-a)\right|\\
\lesssim \frac{1}{R^{N_2}}\frac{M_{T,r}(f)}{(1+\tau|x_U|)^{N}},
\end{equation*}
for every $N, N_2>0$. Summing the left-hand side of the above expression, we denote the result by $\operatorname{Err_{far}}$.
Since the cubes $U$ are pairwise disjoint, for $2\leq p\leq p_\ast$, pick $N_2p>m$. Thus, \eqref{eq:M-L2} gives
\begin{equation}\label{eq:rapid-remainder}
\begin{aligned}
\int_{\mathbb{R}^{m}\times X}|\operatorname{Err_{far}}(x,y,t)|^{p}\rmd x\rmd y\rmd t
&\lesssim \frac{|X|}{R^{N}}M_{T,r}(f)^{p}\sum_{U}\frac{r^{m}}{(1+\tau |x_U|)^{N_2p}}\\
&\lesssim \frac{r^{n+1}}{R^{N}}\frac{M_{T,r}(f)^{p}}{\tau^m}\\
&\lesssim \frac{1}{R^{N}}\|F_\tau[f]\|_{L^{2}}^{2}M_{T,r}(f)^{p-2}.
\end{aligned}
\end{equation}

Next, we consider the indices $\alpha$ with $|v'-x_U|\leq Cr$.
Take a Schwartz function $\chi_1$ on $\mathbb{R}$, with $\supp \widehat{\chi_1}$ contained in a sufficiently small neighborhood of the origin,
such that $|\chi_1(s)|\geq 1$ for $|s|\leq C$. The constant $C$ is chosen so that $|t-a|\leq Cr$ on $X$. For each $\alpha$,
define $g_\alpha(x,y,t)=\chi_1((t-a)/r)E_{m+n}F_\alpha(x,y,t-a)$, then
\begin{equation*}
\left|\sum_{\alpha}E_{m+n}F_\alpha(x,y,t-a)\right|\leq \left|\sum_\alpha g_\alpha(x,y,t)\right|, \qquad (x,y,t)\in U\times X.
\end{equation*}
By the support of $\widehat{F_\alpha}$ and Young's inequality, we have
\begin{equation*}
\left\|E_{m+n}F_\alpha(\cdot,\cdot, u)\right\|_{L^{\infty}}\leq |\supp F_\alpha|^{1/2}\|F_\alpha\|_{L^{2}} \lesssim \frac{1}{r^{(m+n)/4}}\|F_\alpha\|_{L^{2}}.
\end{equation*}
On the other hand, Plancherel's identity gives
\begin{equation*}
\left\|E_{m+n}F_\alpha(\cdot, u)\right\|_{L^{2}}=\|F_\alpha\|_{L^{2}}.
\end{equation*}
Combining these estimates with $p_\ast-2=4/(m+n)$, we obtain
\begin{equation}\label{eq:packet-Lp}
\begin{aligned}
\|g_\alpha\|_{L^{p_\ast}}^{p_\ast}
&\lesssim \int_{\mathbb{R}}\left|\chi_1\left(\frac{t-a}{r}\right)\right|^{p_\ast}\|E_{m+n}F_\alpha(\cdot,\cdot,t-a)\|_{L^{\infty}}^{p_\ast-2}
\|E_{m+n}F_\alpha(\cdot,\cdot,t-a)\|_{L^{2}}^{2}\rmd t\\
&\lesssim r^{1-(m+n)/4(p_\ast-2)}\|F_\alpha\|_{L^{2}}^{p_\ast}=\|F_\alpha\|_{L^{2}}^{p_\ast}.
\end{aligned}
\end{equation}

We discard the indices with sufficiently small $L^{p_\ast}$ norms.
Fix an integer $N>m+1$, which may be taken arbitrarily large, and set  $A_U=M_{T,r}(f)/(1+\tau |x_U|)^{N}$.
Fixed integer $N$ such that $Np_\ast>m$.
For each cube $U$, let
\begin{equation*}
\mathcal I_U^{\mathrm{small}}
=\left\{\alpha:\ |v'-x_U|\leq Cr,\quad\|g_\alpha\|_{L^{p_\ast}}<R^{-N}A_U\right\}.
\end{equation*}
By the bounded Fourier support and the condition $|v-y_0|\leq C_4r$, the numbers of possible $\theta'$, $\theta$, $v'$, and $v$ satisfy
$\#\{\theta'\}\lesssim r^{m/2}$, $\#\{\theta\}\lesssim r^{n/2}$, $\#\{v':|v'-x_U|\leq Cr\}\lesssim r^{m/2}$ and
$\#\{v:|v-y_0|\leq C_4r\}\lesssim r^{n/2}$.
Therefore, $\#\mathcal I_U^{\mathrm{small}}\lesssim r^{m+n}$.
Define $V_U^{\mathrm{small}}=\sum_{\alpha\in\mathcal I_U^{\mathrm{small}}}g_\alpha$.
By triangle inequality,
\begin{equation*}
\|V_U^{\mathrm{small}}\|_{L^{p_\ast}(U\times X)}\lesssim r^{m+n}R^{-N}A_U.
\end{equation*}
For $2\leq p\leq p_\ast$, H\"older's inequality and $|U\times X|\lesssim r^{m+n+1}$ yield
\begin{align*}
\int_{U\times X}|V_U^{\mathrm{small}}(x,y,t)|^p \rmd x \rmd y \rmd t
&\leq |U\times X|^{1-p/p_\ast}\|V_U^{\mathrm{small}}\|_{L^{p_\ast}(U\times X)}^p\\
&\lesssim r^{(m+n+1)(1-p/p_\ast)+(m+n)p}R^{-Np}A_U^p\\
&\lesssim R^{C_{m,n}-Np}A_U^p.
\end{align*}
Since the cubes $U$ have side length $r$ and $\tau r\lesssim 1$, we have, for $Np>m$,
\begin{align*}
\sum_U A_U^p
&=M_{T,r}(f)^p\sum_U(1+\tau|x_U|)^{-Np}\\
&\lesssim (\tau r)^{-m}M_{T,r}(f)^p\\
&\lesssim \tau^{-m}M_{T,r}(f)^p.
\end{align*}
Consequently, by \eqref{eq:M-L2},
\begin{equation*}
\sum_{U}\int_{U\times X}|V_U^{small}|^p \lesssim R^{C_{m,n}-Np}\tau^{-m}M_{T,r}(f)^{p}\lesssim R^{C_{m,n}-Np}\|
F_\tau[f]\|_{L^{2}}^{2}M_{T,r}(f)^{p-2}.
\end{equation*}
Since $N$ may be chosen arbitrarily large and $p\geq2$,
the corresponding sum satisfies \eqref{eq:rapid-remainder} for every prescribed decay order.

We now retain only the indices
\begin{equation*}
\mathcal I_U
=\left\{\alpha:\ |v'-x_ U|\leq Cr,\quad\|g_\alpha\|_{L^{p_\ast}}\geq A_U/R^{N}\right\}.
\end{equation*}
We next determine the range of their $L^{p_\ast}$ norms.
By \eqref{eq:packet-Lp} and \eqref{eq:packet-initial-decay}, taking the decay orders sufficiently large, we have
\begin{equation*}
\begin{aligned}
\|g_\alpha\|_{L^{p_\ast}}
&\lesssim \|F_\alpha\|_{L^2}\\
&\lesssim \frac{q^{1/2}T^{n/2}r^{(m+n)/4}M_{T,r}(f)}
{(1+\tau|v'|)^N}\\
&\lesssim q^{1/2}T^{n/2}r^{(m+n)/4}A_U
\lesssim R^{C_\ast}A_U,
\end{aligned}
\end{equation*}
where $C_\ast>0$. Here we used $|v'-x_U|\leq Cr$ and $\tau r\lesssim1$, which imply
\begin{equation*}
1+\tau|v'|\sim1+\tau|x_U|.
\end{equation*}
Consequently, for every $\alpha\in\mathcal I_U$,
\begin{equation*}
\frac{A_U}{R^N}
\leq\|g_\alpha\|_{L^{p_\ast}}
\lesssim R^{C_\ast}A_U.
\end{equation*}

For each $j\geq0$, let
\begin{equation*}
\mathcal I_U(j)
=\left\{\alpha\in\mathcal I_U:\frac{2^jA_U}{R^N}\leq\|g_\alpha\|_{L^{p_\ast}}
<\frac{2^{j+1}A_U}{R^N}\right\}.
\end{equation*}
The number of nonempty classes is $O(\log R)$.

We further divide these classes according to their frequency and position indices.
The cubes $\theta'$ and the centers $\xi_\theta$ lie on regular lattices of spacing comparable to $r^{-1/2}$.
Denote their lattice indices by $j'\in\mathbb Z^m$ and $j''\in\mathbb Z^n$.
By the Fourier support of $F_\alpha$, we may choose a sufficiently large fixed integer $K_0$ such that distinct frequency regions
in the same congruence class
\begin{equation*}
(j',j'')\pmod{K_0}
\end{equation*}
have disjoint fixed enlargements.

Since $(v',v)\in r^{1/2}\mathbb Z^{m+n}$, write $(v',v)=r^{1/2}\ell$, where $\ell\in\mathbb Z^{m+n}$. Choose
\begin{equation*}
K_r=\left\lceil Cr^{\varepsilon_0/2}\right\rceil
\end{equation*}
with $C$ sufficiently large, and classify $\ell$ modulo $K_r$.
Distinct positions in the same class are separated by at least $K_rr^{1/2}\gtrsim r^{(1+\varepsilon_0)/2}$.

For each $j\geq0$, $\gamma\in(\mathbb Z/K_0\mathbb Z)^{m+n}$ and $b\in(\mathbb Z/K_r\mathbb Z)^{m+n}$, define
\begin{equation*}
\mathcal I_U(j,\gamma,b)
=\big\{\alpha\in\mathcal I_U(j):(j',j'')\equiv\gamma\pmod{K_0},\ell\equiv b\pmod{K_r}\big\}.
\end{equation*}
These sets partition $\mathcal I_U$. Their number $D_U$ satisfies, for every $\eta>0$,
\begin{equation*}
D_U\lesssim K_0^{m+n}K_r^{m+n}\log R
\lesssim r^{(m+n)\varepsilon_0/2}\log R
\lesssim r^{\eta+(m+n)\varepsilon_0/2}.
\end{equation*}

By the triangle inequality,
\begin{equation*}
\left\|\sum_{\alpha\in\mathcal I_U}g_\alpha\right\|_{L^{p_\ast}(U\times X)}
\leq\sum_{j,\gamma,b}
\left\|\sum_{\alpha\in\mathcal I_U(j,\gamma,b)}g_\alpha\right\|_{L^{p_\ast}(U\times X)}.
\end{equation*}
By the pigeonhole principle, there exists $j_U,\gamma_U,b_U$, setting $\mathcal A_U=\mathcal I_U(j_U,\gamma_U,b_U)$, such that
\begin{equation}\label{eq:packet-selection}
\left\|\sum_{\alpha\in\mathcal I_U}g_\alpha\right\|_{L^{p_\ast}(U\times X)}^{p_\ast}
\lesssim r^{p_\ast\eta+(m+n)p_\ast\varepsilon_0/2}\|\sum_{\alpha\in\mathcal A_U}g_\alpha\|_{L^{p_\ast}(U\times X)}^{p_\ast}.
\end{equation}

We verify the hypotheses of Theorem \ref{thm:GIOW} for the selected functions.
By construction, their $L^{p_\ast}$ norms are comparable, and distinct frequency regions within $\mathcal A_U$ have
disjoint fixed enlargements.

For each $\alpha=(\theta',\theta,v',v)$, associate the tube
\begin{equation*}
\mathcal T_\alpha=\big\{(x,y,t):
|t-a|\leq Cr^{1+\varepsilon_0},
|(x,y)-(v',v)+2(t-a)(c_{\theta'},\xi_\theta)|
\leq Cr^{(1+\varepsilon_0)/2}\big\},
\end{equation*}
where $C$ is a sufficiently large fixed constant. Its long axis is normal to the paraboloid at $(c_{\theta'},\xi_\theta)$.
The position congruence classes were chosen with sufficiently large separation so that the tubes corresponding to
any fixed frequency region have bounded overlap.

Writing $\omega=(\eta,\xi)$ and letting $\sigma$ denote the Fourier variable dual to $t$, the definition of $g_\alpha$ gives
\begin{equation*}
\widehat{g_\alpha}(\omega,\sigma)
=e(-a\sigma)F_\alpha(\omega)r\widehat{\chi_1}\left(r(\sigma-|\omega|^2)\right).
\end{equation*}
Since $\widehat{\chi_1}$ has compact support and $\supp F_\alpha$ lies in a ball of radius $Cr^{-1/2}$, we obtain
\begin{equation*}
\supp\widehat{g_\alpha}
\subset\big\{(\omega,\sigma):
|\omega-(c_{\theta'},\xi_\theta)|\leq Cr^{-1/2},
|\sigma-|\omega|^2|\leq Cr^{-1}\big\}.
\end{equation*}
Thus $g_\alpha$ has the required Fourier localization.

It remains to verify that it is essentially supported in $\mathcal T_\alpha$ and write $u=t-a$. Fix $\delta_1=\varepsilon_0/100$.
We extend the argument for \eqref{eq:packet-propagation} to $|u|\leq r^{1+\delta_1}$.

Indeed, after changing variables in the frequency cutoff, the relevant kernel contains the factor
\begin{equation*}
\chi(\zeta)e\left(\frac ur|\zeta|^2\right).
\end{equation*}
For every multi-index $\nu$, its derivatives satisfy
\begin{equation*}
\left|D^\nu\left[\chi(\zeta)e\left(\frac ur|\zeta|^2\right)\right]\right|
\lesssim \left(1+\frac{|u|}{r}\right)^{|\nu|}.
\end{equation*}
Integrating by parts in the Fourier integral and applying the same convolution estimate as in \eqref{eq:packet-propagation}, we obtain, for every $N_1\geq0$,
\begin{align*}
|E_{m+n}F_\alpha(x,y,u)|
&\lesssim \frac{q^{1/2}T^{n/2}M_{T,r}(f)} {(1+\tau|v'|)^N} \left(1+\frac{|u|}{r}\right)^{N_1+m+n+1}\\
&\qquad\times \frac{1}{\left(1+{|(x,y)-(v',v)+2u(c_{\theta'},\xi_\theta)|}/{r^{1/2}}\right)^{N_1}},
\qquad |u|\leq r^{1+\delta_1}.
\end{align*}

Choose $N_1>(m+n)/p_\ast$ and replace the decay order in the preceding estimate by $N_1+N_2$, where $N_2$ will be chosen sufficiently large. On the region
\begin{equation*}
|u|\leq r^{1+\delta_1},
\qquad
|(x,y)-(v',v)+2u(c_{\theta'},\xi_\theta)|
\geq r^{1/2+\varepsilon_0/4},
\end{equation*}
we have
\begin{equation*}
\begin{aligned}
&\frac{\left(1+|u|/r\right)^{N_1+N_2+m+n+1}}{\left(1+{|(x,y)-(v',v)+2u(c_{\theta'},\xi_\theta)|}/{r^{1/2}}\right)^{N_1+N_2}}\\
&\qquad\lesssim
\frac{r^{\delta_1(N_1+m+n+1)}}{r^{N_2(\varepsilon_0/4-\delta_1)}}\frac{1}{\left(1+{|(x,y)-(v',v)+2u(c_{\theta'},\xi_\theta)|}/{r^{1/2}}\right)^{N_1}}.
\end{aligned}
\end{equation*}
Integrating also over $|u|\leq r^{1+\delta_1}$ gives
\begin{align*}
&\left\|g_\alpha 1_{\left\{|t-a|\leq r^{1+\delta_1},\|(x,y)-(v',v)+2(t-a)(c_{\theta'},\xi_\theta)|
\geq r^{1/2+\varepsilon_0/4}\right\}}\right\|_{L^{p_\ast}}\\
&\qquad\lesssim \frac{q^{1/2}T^{n/2}M_{T,r}(f)}{(1+\tau|v'|)^N}\frac{r^{\delta_1(N_1+m+n+1)+(m+n)/2+1+\delta_1)/p_\ast}}{r^{N_2(\varepsilon_0/4-\delta_1)}}\\
&\qquad\lesssim \frac{A_U}{R^{2N}},
\end{align*}
where the last inequality follows by taking $N_2$ sufficiently large, since $\delta_1<\varepsilon_0/4$, $r\geq R^\varsigma$,
and $q,T,r\leq R^C$.

For the remaining time range, the Fourier support of $F_\alpha$ and Plancherel's identity imply, uniformly in $u$,
\begin{equation*}
\begin{aligned}
\|E_{m+n}F_\alpha(\cdot,\cdot,u)\|_{L^{p_\ast}}^{p_\ast}
&\leq
\|E_{m+n}F_\alpha(\cdot,\cdot,u)\|_{L^\infty}^{p_\ast-2}
\|E_{m+n}F_\alpha(\cdot,\cdot,u)\|_{L^2}^2\\
&\lesssim
\frac{\|F_\alpha\|_{L^2}^{p_\ast}}{r}.
\end{aligned}
\end{equation*}
By the Schwartz decay of $\chi_1$, for every $N_2>0$,
\begin{align*}
\left\|g_\alpha 1_{\{|t-a|>r^{1+\delta_1}\}}\right\|_{L^{p_\ast}}^{p_\ast}
&\lesssim \|F_\alpha\|_{L^2}^{p_\ast}\int_{|s|>r^{\delta_1}}|\chi_1(s)|^{p_\ast}\rmd s\\
&\lesssim \frac{\|F_\alpha\|_{L^2}^{p_\ast}}{r^{N_2}}.
\end{align*}
Using $\|F_\alpha\|_{L^2}\lesssim R^{C_\ast}A_U$ and choosing $N_2$ sufficiently large, we obtain
\begin{equation*}
\left\|g_\alpha 1_{\{|t-a|>r^{1+\delta_1}\}}\right\|_{L^{p_\ast}}
\lesssim \frac{A_U}{R^{2N}}.
\end{equation*}

The two estimates show that $g_\alpha$ is essentially supported in
\begin{equation*}
|t-a|\leq r^{1+\delta_1},
\qquad
|(x,y)-(v',v)+2(t-a)(c_{\theta'},\xi_\theta)|
<r^{1/2+\varepsilon_0/4},
\end{equation*}
which is contained in $\mathcal T_\alpha$. Since $\|g_\alpha\|_{L^{p_\ast}}\geq {A_U}/{R^N}$ for $\alpha\in\mathcal A_U$,
we conclude that
\begin{equation}\label{eq:packet-microlocal}
\|g_\alpha\|_{L^{p_\ast}(\mathbb R^{m+n+1}\setminus\mathcal T_\alpha)}
\lesssim \frac{\|g_\alpha\|_{L^{p_\ast}}}{R^N}.
\end{equation}
This verifies the remaining localization condition in Theorem \ref{thm:GIOW}.

We next estimate the multiplicity of the selected tubes. Partition $\mathbb R^{m+n+1}$ into pairwise disjoint cubes of side length $r^{1/2}$,
and let $\mathcal C_U$ consist of those cubes meeting $U\times X$. For each $Q\in\mathcal C_U$, there exists $B\in\mathcal Q$ such that
$Q\cap(U\times B)\neq\varnothing$. Since both $Q$ and $B$ have diameter $O(r^{1/2})$, every $(x,y,t)\in Q$ satisfies $(y,t)\in C_5B$,
where $C_5$ is the sufficiently large fixed constant.

Fix $Q\in \mathcal{C}_U$ and such $B$. Suppose that $\mathcal T_\alpha\cap Q\neq\varnothing$ for
some $\alpha=(\theta',\theta,v',v)\in\mathcal A_U$. By the definition of $\mathcal T_\alpha$, there exists $(x,y,t)\in Q$ satisfying
\begin{equation*}
\begin{aligned}
|y-v+2(t-a)\xi_\theta|
&\leq |(x,y)-(v',v)+2(t-a)(c_{\theta'},\xi_\theta)|\\
&\lesssim r^{(1+\varepsilon_0)/2}
\lesssim r^{1/2+\varepsilon_0}.
\end{aligned}
\end{equation*}
Moreover, $|t-a|\lesssim r$, since $Q$ meets $U\times X$. Thus, by the definition of $\Pi_\iota$ and the choice of the fixed constants,
$(y,t)\in\Pi_{(k,\theta,v)}\cap C_5B$ with $k\in\mathcal W(\theta',\theta,v)$.

Call a triple $(\theta',\theta,v)$ contributing to $Q$ if there exists $v'$ such that $(\theta',\theta,v',v)\in\mathcal A_U$
and $\mathcal T_\alpha\cap Q\neq\varnothing$. Since different triples correspond to disjoint collections of original labels,
the assumption on $M$ gives
\begin{equation*}
\begin{aligned}
M
&\geq\#\{\iota\in\mathcal W:\Pi_\iota\cap C_5B\neq\varnothing\}\\
&\geq\sum_{\substack{(\theta',\theta,v)\\\text{contributing to }Q}}\#\mathcal W(\theta',\theta,v)\\
&\geq q\,\#\{(\theta',\theta,v):(\theta',\theta,v)\text{ contributes to }Q\}.
\end{aligned}
\end{equation*}
Hence at most $M/q$ distinct triples contribute to $Q$.

It remains to count the possible $v'$ for each fixed triple. Let $(x_Q,y_Q,t_Q)$ be the center of $Q$.
If $\mathcal T_\alpha\cap Q\neq\varnothing$, then
\begin{equation*}
\begin{aligned}
|v'-x_Q-2(t_Q-a)c_{\theta'}|
&\leq |v'-x-2(t-a)c_{\theta'}|
+|x-x_Q|+2|t-t_Q||c_{\theta'}|\\
&\lesssim r^{(1+\varepsilon_0)/2}.
\end{aligned}
\end{equation*}
For a fixed $(\theta',\theta,v)$, all the selected $v'$ belong to the same congruence class modulo $K_r$.
Their separation is therefore at least $K_rr^{1/2}$. Since $K_r\sim r^{\varepsilon_0/2}$, the lattice-point bound gives
\begin{equation*}
\#\{v':(\theta',\theta,v',v)\in\mathcal A_U, \mathcal T_\alpha\cap Q\neq\varnothing\}
\lesssim \left(1+\frac{r^{(1+\varepsilon_0)/2}}{K_rr^{1/2}}\right)^m
\lesssim 1.
\end{equation*}
Combining the two estimates, we obtain
\begin{equation}\label{eq:tube-multiplicity}
\#\{\alpha\in\mathcal A_U:
\mathcal T_\alpha\cap Q\neq\varnothing\}
\lesssim\frac Mq,
\qquad Q\in\mathcal C_U.
\end{equation}

Let $Y_U$ be the union of cubes $Q\in\mathcal C_U$ meeting at least one $\mathcal T_\alpha$ with $\alpha\in\mathcal A_U$.
On $(U\times X)\setminus Y_U$, every $g_\alpha$ is evaluated outside $\mathcal T_\alpha$.
Thus, by \eqref{eq:packet-microlocal} and the polynomial bounds for $\#\mathcal A_U$ and $\|g_\alpha\|_{L^{p_\ast}}$,
\begin{align*}
\left\|\sum_{\alpha\in\mathcal A_U}g_\alpha\right\|_{L^{p_\ast}((U\times X)\setminus Y_U)}
&\leq\sum_{\alpha\in\mathcal A_U}
\|g_\alpha\|_{L^{p_\ast}(\mathbb R^{m+n+1}\setminus\mathcal T_\alpha)}\\
&\lesssim \frac{1}{R^N}
\sum_{\alpha\in\mathcal A_U}\|g_\alpha\|_{L^{p_\ast}}\\
&\lesssim \frac{A_U}{R^N},
\end{align*}
after increasing the decay order in \eqref{eq:packet-microlocal}. Summing over $U$ as in \eqref{eq:rapid-remainder} shows that this term satisfies
the required remainder bound.

If $Y_U$ is empty, the estimate follows directly. Otherwise, \eqref{eq:tube-multiplicity} implies $M/q\gtrsim1$.
The cubes forming $Y_U$ lie in a ball of radius $Cr$, which may be covered by a bounded number of balls of radius $r$.
On each such ball, we retain only the selected tubes meeting it.
The omitted terms are controlled by \eqref{eq:packet-microlocal}, as above.

The remaining functions have comparable $L^{p_\ast}$ norms and satisfy the hypotheses of Theorem \ref{thm:GIOW}.
Applying that theorem and using \eqref{eq:tube-multiplicity}, we obtain, for every $\eta>0$,
\begin{equation}\label{eq:refined-tube-bound}
\begin{aligned}
\left\|\sum_{\alpha\in\mathcal A_U}g_\alpha\right\|_{L^{p_\ast}(U\times X)}^{p_\ast}
&\lesssim r^\eta
\left(\frac{M/q}{\#\mathcal A_U}\right)^{p_\ast/2-1}
\left(\sum_{\alpha\in\mathcal A_U}
\|g_\alpha\|_{L^{p_\ast}}^2\right)^{p_\ast/2}
+\frac{A_U^{p_\ast}}{R^N}\\
&\lesssim r^\eta
\left(\frac Mq\right)^{2/(m+n)}
\sum_{\alpha\in\mathcal A_U}
\|g_\alpha\|_{L^{p_\ast}}^{p_\ast}
+\frac{A_U^{p_\ast}}{R^N}.
\end{aligned}
\end{equation}
Here we used the comparability of the selected norms and $p_\ast/2-1=2/(m+n)$.
By the definition of $F_\alpha$, Cauchy--Schwarz's inequality, \eqref{eq:Ftau-decay}, and the amplitude condition, we have
\begin{align*}
\|F_\alpha\|_{L^2}^2
&=\int_{\mathbb R^{m+n}} |\psi'_{v'}(x)|^2\left|\sum_{k\in\mathcal W(\theta',\theta,v)}
E_m[\rho_\tau(\cdot-k)](x,a)(f_{k,\theta,v}^{(r,a)})^\vee(y)\right|^2\rmd x\rmd y\\
&\lesssim q\int_{\mathbb R^{m+n}}|\psi'_{v'}(x)|^2\sum_{k\in\mathcal W(\theta',\theta,v)}\left|(f_{k,\theta,v}^{(r,a)})^\vee(y)\right|^2\rmd x\rmd y\\
&\lesssim qr^{m/2}\sum_{k\in\mathcal W(\theta',\theta,v)}\left\|f_{k,\theta,v}^{(r,a)}\right\|_{L^2}^2\\
&\lesssim q^2\bar\beta^2r^{(m+n)/2}.
\end{align*}
Since $p_\ast-2=4/(m+n)$, \eqref{eq:packet-Lp} yields
\begin{equation*}
\begin{aligned}
\sum_{\alpha\in\mathcal A_U}\|g_\alpha\|_{L^{p_\ast}}^{p_\ast}
&\lesssim\sum_{\alpha\in\mathcal A_U}\|F_\alpha\|_{L^2}^{p_\ast}\\
&\leq
\left(\sup_\alpha\|F_\alpha\|_{L^2}^2\right)^{2/(m+n)}
\sum_{\alpha\in\mathcal A_U}\|F_\alpha\|_{L^2}^2\\
&\lesssim rq^{4/(m+n)}\bar\beta^{4/(m+n)}
\sum_{\alpha\in\mathcal A_U}\|F_\alpha\|_{L^2}^2.
\end{aligned}
\end{equation*}
Combining this with \eqref{eq:packet-selection} and \eqref{eq:refined-tube-bound}, we obtain
\begin{equation}\label{eq:refined-local}
\begin{aligned}
&\int_{U\times X}\left|\sum_{\alpha\in\mathcal I_U}E_{m+n}F_\alpha(x,y,t-a)\right|^{p_\ast}\rmd x\rmd y\rmd t\\
&\quad\lesssim r^{p_\ast\eta+(m+n)p_\ast\varepsilon_0/2}\left\|\sum_{\alpha\in\mathcal A_U}g_\alpha\right\|_{L^{p_\ast}(U\times X)}^{p_\ast}\\
&\quad\lesssim r^{1+(p_\ast+1)\eta+(m+n)p_\ast\varepsilon_0/2}\left(\frac Mq\right)^{2/(m+n)}q^{4/(m+n)}\bar\beta^{4/(m+n)}
\sum_{\substack{\alpha:\\|v'-x_U|\leq Cr}}\|F_\alpha\|_{L^2}^2+\frac{A_U^{p_\ast}}{R^N}\\
&\quad= r^{1+(p_\ast+1)\eta+(m+n)p_\ast\varepsilon_0/2}q^{2/(m+n)}(\bar\beta^2M)^{2/(m+n)}
\sum_{\substack{\alpha:\\|v'-x_U|\leq Cr}}\|F_\alpha\|_{L^2}^2+\frac{A_U^{p_\ast}}{R^N}.
\end{aligned}
\end{equation}
For each fixed $v'$, at most $O(1)$ cubes $U$ satisfy $|v'-x_U|\leq Cr$. Moreover, by $\sum_{v'}|\psi'_{v'}|^2\leq1$ and
\eqref{eq:Ftau-orthogonality},
\begin{equation}\label{eq:packet-sum-L2}
\begin{aligned}
\sum_\alpha\|F_\alpha\|_{L^2}^2
&= \sum_{\theta',\theta,v}\sum_{v'}\int_{\mathbb R^{m+n}}|\psi'_{v'}(x)|^2\left|\sum_{k\in\mathcal W(\theta',\theta,v)}
E_m[\rho_\tau(\cdot-k)](x,a)(f_{k,\theta,v}^{(r,a)})^\vee(y)\right|^2\rmd x\rmd y\\
&\leq\sum_{\theta',\theta,v}\int_{\mathbb R^{m+n}}\left|\sum_{k\in\mathcal W(\theta',\theta,v)}
E_m[\rho_\tau(\cdot-k)](x,a)(f_{k,\theta,v}^{(r,a)})^\vee(y)\right|^2\rmd x\rmd y\\
&= \frac{\|\rho\|_{L^2}^2}{\tau^m}\sum_{\iota\in\mathcal W}\|f_\iota^{(r,a)}\|_{L^2}^2.
\end{aligned}
\end{equation}
Summing \eqref{eq:refined-local} over $U$, and using \eqref{eq:packet-sum-L2} and \eqref{eq:M-L2}, we obtain
\begin{align*}
&\sum_U\int_{U\times X} \left|\sum_{\alpha\in\mathcal I_U}E_{m+n}F_\alpha(x,y,t-a)\right|^{p_\ast}\rmd x\rmd y\rmd t\\
&\quad\lesssim r^{1+(p_\ast+1)\eta+(m+n)p_\ast\varepsilon_0/2}q^{2/(m+n)}(\bar\beta^2M)^{2/(m+n)}\frac{\|\rho\|_{L^2}^2}{\tau^m}
\sum_{\iota\in\mathcal W}\|f_\iota^{(r,a)}\|_{L^2}^2\\
&\qquad+ \frac{1}{R^N}\|F_\tau[f]\|_{L^2}^2M_{T,r}(f)^{p_\ast-2}.
\end{align*}
The discarded term in \eqref{eq:rapid-remainder} and the small-norm selection satisfy the same remainder bound.

Finally, by the pigeonhole principle's selections of \eqref{eq:refined-selection1}, \eqref{eq:packet-selection} and recalling that
\begin{equation*}
q^{2/(m+n)}
\lesssim \left(\frac{L}{r^{1/2}}\right)^{2m/(m+n)},
\end{equation*}
we obtain
\begin{align*}
&\int_{\mathbb R^m\times X}|E_{m+n}F_\tau[f_{\mathcal W}](x,y,t)|^{p_\ast}\rmd x\rmd y\rmd t\\
&\quad\lesssim r^{1+(2p_\ast+1)\eta+(m+n)p_\ast\varepsilon_0/2}\left(\frac{L}{r^{1/2}}\right)^{2m/(m+n)}
(\bar\beta^2M)^{2/(m+n)}\frac{\|\rho\|_{L^2}^2}{\tau^m}\sum_{\iota\in\mathcal W}\|f_\iota^{(r,a)}\|_{L^2}^2\\
&\qquad+ \frac{1}{R^N}\|F_\tau[f]\|_{L^2}^2 M_{T,r}(f)^{p_\ast-2}.
\end{align*}
Choosing $\eta>0$ and then $\varepsilon_0>0$ sufficiently small so that
\begin{equation*}
(2p_\ast+1)\eta
+\frac{(m+n)p_\ast\varepsilon_0}{2}
<\varepsilon,
\end{equation*}
and increasing the decay orders to absorb the losses from the selections, we obtain \eqref{eq:refined-decoupling}.
\end{proof}

\section{Proof of the Local Estimate}\label{sec:local-estimate}

We prove \eqref{eq:local-reduction} by establishing the following stronger estimate.

\begin{theorem}\label{thm:local-estimate}
Let $R>1$, $L=R^{1/2}$ and $0<\tau\leq c/R$. Fix a dyadic number $1\leq T\leq R$ and a ball $B_r(y_0,a)\subset\mathbb{R}^{n+1}$
with $1\leq r\leq T$ and $a\sim T$, such that $t\sim T$ on its fixed enlargements.

For every $\zeta>0$ and every $f$ satisfying the Fourier support condition in Section \ref{sec:wave-packets}, we have
\begin{equation}\label{eq:local-estimate}
\int_{\mathbb{R}^m\times B_r(y_0,a)}|E_{m+n}F_\tau[f](x,y,t)|^{p_0}\rmd x\rmd y\rmd t
\lesssim R^\zeta(L^mT^n)^{p_0/2-1}\|F_\tau[f]\|_{L^2}^2M_{T,r}(f)^{p_0-2}.
\end{equation}
\end{theorem}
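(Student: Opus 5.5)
We prove \eqref{eq:local-estimate} by induction on the radius $r$, in the manner of Wang and Wu \cite{WW}. The inductive hypothesis is that \eqref{eq:local-estimate} holds, with a constant $C_\zeta$, on every ball of radius $r'\leq r/2$ (at any admissible center $(y_0',a')$, with $t\sim T$ still valid) and for every admissible $f$. The base case $r\leq R^{\varsigma}$, with $\varsigma\ll\zeta$, follows from Corollary \ref{cor:terminal}: \eqref{eq:terminal} gives the bound with the loss $r\leq R^\varsigma$.

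For $r> R^\varsigma$, decompose $f$ into wave packets at $(r,a)$ as in Section \ref{sec:wave-packets}. By \eqref{eq:packet-decay}, only labels with $|v-y_0|\leq C_4r$ contribute to $B_r(y_0,a)$ up to an $R^{-N}$ error. Dyadically pigeonhole the amplitudes $\|f_\iota^{(r,a)}\|_{L^2}/r^{n/4}\sim\bar\beta$; very small amplitudes are negligible by \eqref{eq:packet-infty} and the polynomial count of labels. This leaves $O(\log R)$ classes, and we select one, $\mathcal W$. Next cover $B_r(y_0,a)$ by $r^{1/2}$-balls $B$, pigeonhole the multiplicity $\#\{\iota\in\mathcal W:\Pi_\iota\cap C_5B\neq\varnothing\}\sim M$, and call the union of the selected balls $X$. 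We then split into two cases.

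\emph{Case 1 (dense: each tube is heavily shaded by $X$).} Apply Theorem \ref{thm:refined-decoupling} at $p_\ast$ on $X$. Controlling $\bar\beta^2 M$ is the geometric input. For each direction class, $P_r$ is bounded via \eqref{eq:directional-bound}. The union $|X|$ is compared with $\sum|Y(\ell)|$ after rescaling the $r\times r^{1/2}$ tubes to $\delta=r^{-1/2}$ lines in $\mathbb R^{n+1}$. Here we use Proposition \ref{prop:twoends-plane} for $n=1$, Proposition \ref{prop:twoends-three-d} for $n=2$, and, for $n\geq3$, the better of Propositions \ref{prop:twoends-general} and \ref{prop:twoends-dense}. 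The two-ends reduction is the standard one: if a shading concentrates on a $r^{\varepsilon_1}$-subtube, we pass to a smaller ball and use induction. The $1$-parallel condition is arranged by pigeonholing the multiplicity in each direction, which is where the factor $P_r$ enters. Combining yields an estimate of the form $|X|\gtrsim r^{-O(\varepsilon_1)}\cdots$, hence an upper bound on $M$. Interpolating the $L^{p_\ast}$ bound with the $L^\infty$ bound \eqref{eq:terminal-pointwise} and the $L^2$ bound on $X$ converts this to $L^{p_0}$. The factor $(L/r^{1/2})^{2m/d}$ from Theorem \ref{thm:refined-decoupling}, together with $(L^mT^n)^{p_0/2-1}$, is what fixes $p_0$. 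Using \eqref{eq:packet-L2}, Lemma \ref{lem:packet-L2} and $M_{T,r}$, the exponent bookkeeping should close exactly when $p\geq p_0$, with the dimensional formulas in Theorem \ref{thm:main}.

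\emph{Case 2 (non-two-ends: tubes concentrate).} Here the shading of most tubes lies within balls of radius $r^{1-\varepsilon_1}$. Cover $B_r$ by such smaller balls $B'$, and on each one let $f_{\mathcal W_{B'}}$ be the sum of the packets whose concentration lies in $B'$. The sets $\mathcal W_{B'}$ have bounded overlap, up to $r^{O(\varepsilon_1)}$. Apply the induction hypothesis on each $B'$ (after reducing to the wave packet selection $f_{\mathcal W_{B'}}$), then sum using \eqref{eq:packet-subsum-L2} and $M_{T,r'}(f_{\mathcal W})\lesssim M_{T,r}(f)$ from Lemma \ref{lem:packet-subsum}. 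The resulting constant is $C_\zeta (r^{1-\varepsilon_1})^{\zeta}\cdot r^{O(\varepsilon_1)}\cdots$, which closes once $\varepsilon_1$ is small compared with $\zeta$.

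I expect the main obstacle to be the numerology in Case 1. One must combine the refined decoupling gain $(\bar\beta^2M)^{2/d}$ with the periodic loss $(L/r^{1/2})^{2m/d}$, which arises from the $m$ discrete directions counted without geometry. This has to be balanced against the incidence bounds in the $n+1$ Euclidean variables only, while tracking the extra parameter $\lambda$ and optimizing over it. I would first check that the $n=1$ case gives $p_0=2+4/(m+2)$, and then do the $n\geq3$ optimization between Propositions \ref{prop:twoends-general} and \ref{prop:twoends-dense} at the threshold $\lambda\sim\delta^{1/4}$.
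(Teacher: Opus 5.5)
\textbf{Gap 1: the change of variables in Case~1.} You rescale the tubes $\Pi_\iota$ by $r$ in the original $(y,t)$ coordinates and take the number of tubes in a $\delta$-cap of directions to be $P_r$. That identification fails whenever $T\ll r^{3/2}$, in particular at the starting scale $r\sim T$. The axis of $\Pi_\iota$ is $y=z_\iota-2t\xi_\theta$ with $z_\iota=v+2a\xi_\theta$. So a $\delta$-cap of directions is essentially one cap $\theta$, with $v$ free in $|v-y_0|\leq C_4r$. By contrast, $P_r$ counts labels whose time-zero points $z_\iota$ lie in a ball of radius $C_3T/r^{1/2}$, i.e.\ bushes through a common point at $t=0$. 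These are dual configurations, and $M_{T,r}(f)$ only controls bushes. For a fixed direction you must cover the $r$-ball of $z_\iota$'s by $T/r^{1/2}$-balls, and \eqref{eq:packet-local-L2} then gives only $\bar\beta^2\cdot\#\{\text{labels in one direction}\}\lesssim r^{n/2}M_{T,r}(f)^2$. Compared with $(T/r)^nM_{T,r}(f)^2$ in \eqref{eq:directional-bound}, this loses $(r^{3/2}/T)^n$, i.e.\ $r^{n/2}$ when $r\sim T$, which destroys the exponent.

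The paper first applies the pseudo-conformal map $\Phi_T(y,t)=(T(y-y_0)/t,\,T^2/t-T^2/a)$. Its Jacobian is $\sim1$ on the region, so $r^{1/2}$-balls, volumes and shadings are preserved up to constants. It sends the axis of $\Pi_\iota$ to a line of slope $(v-y_0+2a\xi_\theta)/T$. Bushes at $t=0$ thus become parallel families, and a $\delta$-cap of slopes corresponds to a $T\delta$-ball of $z_\iota$. This yields \eqref{eq:direction-count}, $\#\mathcal W\lesssim r^{n\varepsilon_0}P_r\#\mathcal L_0$, and only then do the $1$-parallel Furstenberg estimates apply with the factor $P_r$.

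\textbf{Gap 2: the induction does not close as set up.} Inducting on \eqref{eq:local-estimate} gains nothing as $r$ decreases, since $R^\zeta$ does not depend on $r$. A pure $r^\zeta$ hypothesis is incompatible with your base case, which costs $r\leq R^\varsigma$. You need a hybrid constant such as $R^\varsigma r^{A\varepsilon^2}$, as in \eqref{eq:scale-induction}. More seriously, an overlap of $r^{O(\varepsilon_1)}$ against a gain of $r^{-\varepsilon_1\zeta}$ leaves $r^{\varepsilon_1(C-\zeta)}$. This is unbounded once $C>\zeta$, however small $\varepsilon_1$ is, so the overlap must be governed by a much smaller exponent. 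Also, failure of two-ends only puts an $r^{-\varepsilon_2}$ fraction of a shading in one segment, not most of it.

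The paper cuts each tube into time segments of length $r^{1-\varepsilon^2}$. It then pigeonholes the number $\lambda r^{1/2}$ of balls per heavy segment and the number $\beta$ of heavy segments:
\begin{itemize}
\item If $\beta\geq r^{\varepsilon^4}$, the shadings are $(4\varepsilon^2,\varepsilon^4)$-two-ends.
\item If $\beta<r^{\varepsilon^4}$, each packet enters only $O(\beta)$ balls of radius $r'=r^{1-\varepsilon^2}/100$. Then $\beta(r')^{A\varepsilon^2}\lesssim r^{A\varepsilon^2-(A-1)\varepsilon^4}$ absorbs the $(\log R)^C$ losses once $A>1$.
\end{itemize}
The same pigeonholing gives $\#\mathcal Q(\iota)\sim\lambda\beta r^{1/2}$, hence the $L^2$ input $\lambda\beta r\|F_\tau[f]\|_{L^2}^2$ on $X$. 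H\"older between $L^2$ and $L^{p_\ast}$ with $\vartheta=(m+n)/(m+n+2\gamma_n)$ then cancels $\lambda\beta r$ exactly. No optimization in $\lambda$ and no $L^\infty$ bound is needed. It is $\gamma_n$ that fixes $p_0-2=4/(m+n+2\gamma_n)$; the factor $(L/r^{1/2})^{2m/(m+n)}$ only makes the $L^{p_\ast}$ bound scale like $(L^mT^n)^{2/(m+n)}$.
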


\begin{proof}
Fix $\zeta>0$. Let $\varepsilon,\varsigma>0$ be sufficiently small, and choose $\varepsilon_0>0$ sufficiently small
in terms of $\varepsilon$. All the estimates below are uniform for $0<\tau\leq c/R$.
We use $\operatorname{Err}_p$ for a term satisfying, for every $N>0$,
\begin{equation}\label{eq:negligible-remainder}
\operatorname{Err}_p\lesssim \frac{1}{R^N}\|F_\tau[f]\|_{L^2}^2M_{T,r}(f)^{p-2},\qquad 2\leq p\leq p_\ast.
\end{equation}
The decay orders in the estimates below are chosen after $N$.

\medskip
\noindent\textbf{Step 1. One-step reduction.}
\nopagebreak

By \eqref{eq:terminal}, we only need to consider $r>R^\varsigma$. For $\iota=(k,\theta,v)$, write
\begin{equation*}
\mathcal P_\iota(x,y,t)=E_m[\rho_\tau(\cdot-k)](x,t)E_nf_\iota^{(r,a)}(y,t-a).
\end{equation*}
By \eqref{eq:wp-evolution}, $E_{m+n}F_\tau[f]=\sum_\iota\mathcal P_\iota$.

We first discard the wave packets with $|v-y_0|>C_4r$.
For $(y,t)\in B_r(y_0,a)$, we have $|y-y_0|\leq r$ and $|t-a|\leq r$.
Since $|\xi_\theta|\lesssim1$, choosing $C_4$ sufficiently large gives
\begin{equation*}
|y-v+2(t-a)\xi_\theta|
\geq |v-y_0|-|y-y_0|-2|t-a||\xi_\theta|
\geq \frac12|v-y_0|.
\end{equation*}
For every $N_1>n$, summing over $v\in r^{1/2}\mathbb Z^n$ yields
\begin{align*}
\sum_{\substack{v\in r^{1/2}\mathbb Z^n\\|v-y_0|>C_4r}} \frac{1}{\left(1+{|y-v+2(t-a)\xi_\theta|}/{r^{1/2}}\right)^{N_1}}
&\lesssim \sum_{\substack{v\in r^{1/2}\mathbb Z^n\\|v-y_0|>C_4r}}\frac{1}{\left(1+{|v-y_0|}/{r^{1/2}}\right)^{N_1}}\\
&\lesssim\frac{1}{r^{(N_1-n)/2}}.
\end{align*}
Since $\#\{k:|k|\leq1\}\lesssim L^m$ and $\#\Theta\lesssim r^{n/2}$,
applying \eqref{eq:Ftau-decay} and \eqref{eq:packet-decay}, we obtain, for every $N_2\geq0$,
\begin{align*}
\left|\sum_{\iota:\,|v-y_0|>C_4r}\mathcal P_\iota(x,y,t)\right|
&\lesssim \frac{T^{n/2}M_{T,r}(f)}{(1+\tau|x|)^{N_2}}\sum_{k,\theta}\sum_{\substack{v\in r^{1/2}\mathbb Z^n\\|v-y_0|>C_4r}}
\frac{1}{\left(1+{|y-v+2(t-a)\xi_\theta|}/{r^{1/2}}\right)^{N_1}}\\
&\lesssim \frac{L^mT^{n/2}r^{n/2}M_{T,r}(f)}{r^{(N_1-n)/2}(1+\tau|x|)^{N_2}}\\
&\lesssim \frac{M_{T,r}(f)}{R^{N+n+1}(1+\tau|x|)^{N_2}},
\end{align*}
where the last inequality follows by taking $N_1$ sufficiently large, since $r\geq R^\varsigma$, $L=R^{1/2}$ and $T\leq R$.

Taking $N_2p>m$ and using \eqref{eq:M-L2}, for $2\leq p\leq p_\ast$, we have
\begin{align*}
\int_{\mathbb R^m\times B_r(y_0,a)} \left|\sum_{\iota:\,|v-y_0|>C_4r}\mathcal P_\iota(x,y,t)\right|^p \rmd x\rmd y\rmd t
&\lesssim \frac{r^{n+1}M_{T,r}(f)^p}{R^{p(N+n+1)}}\int_{\mathbb R^m}\frac{\rmd x}{(1+\tau|x|)^{N_2p}}\\
&\lesssim \frac{1}{R^N}\|F_\tau[f]\|_{L^2}^2M_{T,r}(f)^{p-2}.
\end{align*}
This satisfies the required remainder estimate \eqref{eq:negligible-remainder}. Thus we only need to consider labels satisfying $|v-y_0|\leq C_4r$.

By the bounded Fourier support, the number of remaining labels satisfies
\begin{equation*}
\#\{\iota=(k,\theta,v):|v-y_0|\leq C_4r\}
\lesssim L^m r^n\lesssim R^{m/2+n}.
\end{equation*}
For the case $\iota$ obeying $\|f_\iota^{(r,a)}\|_{L^{2}}/r^{n/4}<M_{T,r}(f)/R^N$, \eqref{eq:Ftau-decay} and \eqref{eq:packet-infty} give
\begin{equation*}
\left|\sum_{\iota}\mathcal P_\iota(x,y,t)\right|
\lesssim \frac{1}{(1+\tau|x|)^{N_2}} \sum_{\iota} \frac{\|f_\iota^{(r,a)}\|_{L^2}}{r^{n/4}}
\lesssim \frac{L^mr^nM_{T,r}(f)}{R^N(1+\tau|x|)^{N_2}}.
\end{equation*}
Consequently, for $2\leq p\leq p_\ast$ and $N_2p>m$, we obtain
\begin{align*}
\int_{\mathbb R^m\times B_r(y_0,a)}\left|\sum_{\iota}\mathcal P_\iota(x,y,t)\right|^p\rmd x\rmd y\rmd t
&\lesssim\frac{L^{mp}r^{np+n+1}}{R^{Np}}\frac{M_{T,r}(f)^p}{\tau^m}\\
&\lesssim\frac{R^{C_{m,n}}}{R^{Np}}\|F_\tau[f]\|_{L^2}^2M_{T,r}(f)^{p-2}.
\end{align*}
Since $N$ may be chosen arbitrarily large, the last expression is bounded by \eqref{eq:negligible-remainder} for every prescribed decay order.

For all the remaining labels, \eqref{eq:packet-local-L2} yields
\begin{equation*}
\frac{M_{T,r}(f)}{R^N}
\leq\frac{\|f_\iota^{(r,a)}\|_{L^2}}{r^{n/4}}
\lesssim\left(\frac Tr\right)^{n/2}M_{T,r}(f)
\lesssim R^CM_{T,r}(f).
\end{equation*}
For each dyadic $\bar\beta$, let $\mathcal W(\bar\beta)$ be the set of labels satisfying
\begin{equation*}
\bar\beta\leq\frac{\|f_\iota^{(r,a)}\|_{L^2}}{r^{n/4}}<2\bar\beta.
\end{equation*}
There are $O(\log R)$ nonempty classes. By the triangle inequality and the pigeonhole principle, one class $\mathcal W_0$ satisfies
\begin{equation}\label{eq:amplitude-selection}
\begin{aligned}
&\int_{\mathbb R^m\times B_r(y_0,a)}|E_{m+n}F_\tau[f](x,y,t)|^{p_0}\rmd x\rmd y\rmd t\\
&\qquad\lesssim(\log R)^C\int_{\mathbb R^m\times B_r(y_0,a)}|E_{m+n}F_\tau[f_{\mathcal W_0}](x,y,t)|^{p_0}\rmd x\rmd y\rmd t+\operatorname{Err}_{p_0}.
\end{aligned}
\end{equation}

For a ball $B\subset\mathbb R^{n+1}$, write
\begin{equation*}
\mathcal N_B(G)=\left(\tau^m\int_{\mathbb R^m\times B}|G(x,y,t)|^{p_0}\rmd x\rmd y\rmd t\right)^{1/p_0}.
\end{equation*}
Cover $B_r(y_0,a)$ by $r^{1/2}$-balls whose centers lie on a regular $cr^{1/2}$-lattice, with $c>0$ fixed sufficiently small.
Denote this bounded-overlap family by $\mathcal B$.
By \eqref{eq:packet-decay} and \eqref{eq:Ftau-decay},
\begin{equation*}
\mathcal N_B(E_{m+n}F_\tau[f_{\mathcal W_0}])\lesssim R^CM_{T,r}(f).
\end{equation*}
Also, $\#\mathcal B\lesssim r^{(n+1)/2}\leq R^{(n+1)/2}$.
If the balls obeying
\begin{equation*}
\mathcal N_B(E_{m+n}F_\tau[f_{\mathcal W_0}])<\frac{M_{T,r}(f)}{R^N},
\end{equation*}
\eqref{eq:M-L2} yields
\begin{align*}
\sum_{B}\int_{\mathbb R^m\times B}|E_{m+n}F_\tau[f_{\mathcal W_0}](x,y,t)|^{p_0}\rmd x\rmd y\rmd t
&\lesssim\frac{\#\mathcal B}{\tau^mR^{Np_0}}M_{T,r}(f)^{p_0}\\
&\lesssim\frac{1}{R^{Np_0-C}}\|F_\tau[f]\|_{L^2}^2M_{T,r}(f)^{p_0-2}.
\end{align*}
Hence these terms satisfy the remainder estimate \eqref{eq:negligible-remainder}. We therefore restrict attention to the remaining balls, which obey
\begin{equation*}
\frac{M_{T,r}(f)}{R^N}\lesssim\mathcal N_B(E_{m+n}F_\tau[f_{\mathcal W_0}])\lesssim R^CM_{T,r}(f).
\end{equation*}
Denote this subfamily by $\mathcal{B}_0$.

If $\mathcal{B}_0=\varnothing$, the estimate follows directly. Otherwise, since $\mathcal{B}$ cover $B_r(y_0, a)$, we have
\begin{align*}
&\int_{\mathbb{R}^{m}\times B_r(y_0,a)}\left|E_{m+n}F_\tau[f_{\mathcal W_0}](x,y,t)\right|^{p_0}\rmd x\rmd y\rmd t\\
&\qquad\lesssim \sum_{B\in \mathcal{B}_0}\int_{\mathbb{R}^{m}\times B}\left|E_{m+n}F_\tau[f_{\mathcal W_0}](x,y,t)\right|^{p_0}\rmd x\rmd y\rmd t+\operatorname{Err}_{p_0}.
\end{align*}
Since $\mathcal{B}_0$ has bounded overlap, we can partition $\mathcal{B}_0$ into $O(1)$ subfamilies of disjoint balls.
By the pigeonhole principle, one of these subfamilies $\mathcal{B}_1$ satisfies
\begin{align*}
&\sum_{B\in \mathcal{B}_0}\int_{\mathbb{R}^{m}\times B}\left|E_{m+n}F_\tau[f_{\mathcal W_0}](x,y,t)\right|^{p_0}\rmd x\rmd y\rmd t\\
&\qquad\lesssim \sum_{B\in \mathcal{B}_1}\int_{\mathbb{R}^{m}\times B}\left|E_{m+n}F_\tau[f_{\mathcal W_0}](x,y,t)\right|^{p_0}\rmd x\rmd y\rmd t.
\end{align*}
Partition $\mathcal{B}_1$ into classes $\{B\in \mathcal{B}_1:2^{j}\leq \mathcal{N}_B(E_{m+n}F_\tau[f_{\mathcal W_0}])<2^{j+1}\}$ for $j\in\mathbb Z$.
By the pigeonhole principle, one can select a family of balls $\mathcal{Q}\subset \mathcal{B}_1$
such that the values $\mathcal{N}_B(E_{m+n}F_\tau[f_{\mathcal W_0}])$ associated with the balls in $\mathcal{Q}$ are comparable, and
\begin{align*}
&\sum_{B\in \mathcal{B}_1}\int_{\mathbb{R}^{m}\times B}\left|E_{m+n}F_\tau[f_{\mathcal W_0}](x,y,t)\right|^{p_0}\rmd x\rmd y\rmd t\\
&\qquad\lesssim (\log R)^C\sum_{B\in \mathcal{Q}}\int_{\mathbb{R}^{m}\times B}\left|E_{m+n}F_\tau[f_{\mathcal W_0}](x,y,t)\right|^{p_0}\rmd x\rmd y\rmd t.
\end{align*}
Let $X=\cup_{B\in \mathcal{Q}}B$, so that
\begin{equation}\label{eq:ball-selection}
\begin{aligned}
&\int_{\mathbb R^m\times B_r(y_0,a)}|E_{m+n}F_\tau[f_{\mathcal W_0}](x,y,t)|^{p_0}\rmd x\rmd y\rmd t\\
&\qquad\lesssim(\log R)^C\int_{\mathbb R^m\times X}|E_{m+n}F_\tau[f_{\mathcal W_0}](x,y,t)|^{p_0}\rmd x\rmd y\rmd t+\operatorname{Err}_{p_0}.
\end{aligned}
\end{equation}

\medskip
\noindent\textbf{Step 2. Two-ends reduction.}
\nopagebreak

We next select the wave packets and balls so that their incidences satisfy the counting estimates needed for the two-ends argument.

For $\sigma\in[0,r^{1-\varepsilon^2})$, let $I_j^\sigma=[\sigma+jr^{1-\varepsilon^2},\sigma+(j+1)r^{1-\varepsilon^2})$ with $j\in\mathbb Z$.
For each $B\in\mathcal Q$, denote by $t_B$ the time coordinate of its center. For each fixed $B\in \mathcal{Q}$, we have
\begin{equation*}
\frac{1}{r^{1-\varepsilon^2}}\int_0^{r^{1-\varepsilon^2}} 1_{\{\operatorname{dist}(t_B,\sigma+r^{1-\varepsilon^2}\mathbb Z)
\geq r^{1-\varepsilon^2}/4\}}\rmd\sigma
=\frac12,
\end{equation*}
summing over $B\in\mathcal Q$ gives
\begin{equation*}
\frac{1}{r^{1-\varepsilon^2}}\int_0^{r^{1-\varepsilon^2}}\#\left\{B\in\mathcal Q:
\operatorname{dist}(t_B,\sigma+r^{1-\varepsilon^2}\mathbb Z)\geq \frac{r^{1-\varepsilon^2}}{4}\right\}
\rmd\sigma
=\frac12\#\mathcal Q.
\end{equation*}
By averaging, there exists $\sigma\in[0,r^{1-\varepsilon^2})$ such that
\begin{equation*}
\#\left\{B\in\mathcal Q:
\operatorname{dist}(t_B,\sigma+r^{1-\varepsilon^2}\mathbb Z)\geq \frac{r^{1-\varepsilon^2}}{4}\right\}
\geq\frac12\#\mathcal Q.
\end{equation*}
Since the local norms $\mathcal N_B(E_{m+n}F_\tau[f_{\mathcal W_0}])$ are comparable on $\mathcal Q$, such balls satisfy
\begin{equation*}
\begin{aligned}
&\int_{\mathbb R^m\times X}|E_{m+n}F_\tau[f_{\mathcal W_0}](x,y,t)|^{p_0}\rmd x\rmd y\rmd t\\
&\quad\lesssim \sum_{\substack{B\in\mathcal Q\\\operatorname{dist}(t_B,\sigma+r^{1-\varepsilon^2}\mathbb Z)\geq r^{1-\varepsilon^2}/4}}
\int_{\mathbb R^m\times B}|E_{m+n}F_\tau[f_{\mathcal W_0}](x,y,t)|^{p_0}\rmd x\rmd y\rmd t.
\end{aligned}
\end{equation*}
We retain these balls and continue to denote their family and union by $\mathcal Q$ and $X$. Write $I_j=I_j^\sigma$.
Since $r^{1/2}\ll r^{1-\varepsilon^2}$, each $B\in\mathcal Q$, together with its fixed enlargements,
is contained in $\mathbb R^n\times I_j$ for a unique $j$.

For each $\iota=(k,\theta,v)$, define
\begin{equation*}
\Pi_\iota^*
=\left\{(y,t): |t-a|\leq C_6r,\quad|y-v+2(t-a)\xi_\theta|\leq C_7r^{1/2+\varepsilon_0}\right\},
\end{equation*}
where $C_6,C_7$ are sufficiently large fixed constants. By the definition of $\Pi_\iota$ and the boundedness of $\xi_\theta$,
they may be chosen so that $\Pi_\iota\cap C_5B\neq\varnothing$ implies $B\subset\Pi_\iota^*$ for $B\in\mathcal Q$.

For every $\iota$, partition $\Pi^{\ast}_\iota=\sqcup_j J_{\iota,j}$, where $J_{\iota,j}=\Pi_\iota^*\cap(\mathbb R^n\times I_j)$.
For each nonempty $J_{\iota,j}$, define
$N_X(J_{\iota,j})=\#\{B\in\mathcal Q:B\cap J_{\iota,j}\neq\varnothing\}$.
For every dyadic number $\lambda\in2^{\mathbb Z}$, let $\mathcal J_\lambda(\iota)$ consist of those $J_{\iota,j}$ satisfying
\begin{equation*}
\lambda r^{1/2}
\leq N_X(J_{\iota,j})
<2\lambda r^{1/2}.
\end{equation*}
Since
\begin{equation*}
1\leq N_X(J_{\iota,j})
\leq\#\mathcal Q
\lesssim r^{(n+1)/2}
\lesssim R^{(n+1)/2}
\end{equation*}
whenever $N_X(J_{\iota,j})>0$, there are $O(\log R)$ nonempty choices of $\lambda$.

For each such $\lambda$, define
\begin{equation*}
G_{\tau,\lambda}(x,y,t)
=\sum_{\iota\in\mathcal W_0}\mathcal P_\iota(x,y,t)\sum_{J\in\mathcal J_\lambda(\iota)}1_J(y,t).
\end{equation*}
On $\mathbb R^m\times X$, we have
\begin{equation*}
E_{m+n}F_\tau[f_{\mathcal W_0}]-\sum_\lambda G_{\tau,\lambda}
=\sum_{\iota\in\mathcal W_0}\mathcal P_\iota 1_{(\Pi_\iota^*)^c}.
\end{equation*}
By \eqref{eq:packet-decay}, \eqref{eq:Ftau-decay} and $r\geq R^\varsigma$, for every $N>0$,
\begin{equation*}
\mathcal N_B\left(\sum_{\iota\in\mathcal W_0}\mathcal P_\iota 1_{(\Pi_\iota^*)^c}\right)
\lesssim \frac{M_{T,r}(f)}{R^N}, \qquad B\in\mathcal Q.
\end{equation*}
Taking the decay order sufficiently large, the remainder is at most half of $\mathcal N_B(E_{m+n}F_\tau[f_{\mathcal W_0}])$.
Thus, the triangle inequality implies
\begin{equation*}
\mathcal N_B(E_{m+n}F_\tau[f_{\mathcal W_0}])
\lesssim
\sum_\lambda\mathcal N_B(G_{\tau,\lambda}).
\end{equation*}
Hence, by the pigeonhole principle, for each $B\in \mathcal{Q}$, there exists a $\lambda(B)$ such that
\begin{equation*}
\mathcal N_B(E_{m+n}F_\tau[f_{\mathcal W_0}])
\lesssim \log R \,\mathcal{N}_B(G_{\tau,\lambda(B)}).
\end{equation*}

We classify the balls in $\mathcal{Q}$ by the value of $\lambda(B)$. By the choice of $\lambda$, such number of classes $\lesssim \log R$.
By the pigeonhole principle, there exists $\mathcal{Q}'\subset \mathcal{Q}$, denoting $X'=\cup_{B\in \mathcal{Q}'}B$, such that
for every $B_1, B_2\in \mathcal{Q}'$, $\lambda(B_1)=\lambda(B_2)$ and
\begin{equation*}
\int_{\mathbb{R}^{m}\times X}|E_{m+n}F_\tau[f_{\mathcal W_0}](x,y,t)|^{p_0}\rmd x\rmd y\rmd t
\lesssim \log R\int_{\mathbb{R}^{m}\times X'}|E_{m+n}F_\tau[f_{\mathcal W_0}](x,y,t)|^{p_0}\rmd x\rmd y\rmd t.
\end{equation*}
We retain the balls satisfying $\lambda(B)=\lambda$.

For every $B\in \mathcal{Q}'$, the preceding estimate and the bounds on $\mathcal N_B(E_{m+n}F_\tau[f_{\mathcal W_0}])$ give
\begin{equation*}
\frac{M_{T,r}(f)}{R^{C_N}}
\lesssim \mathcal N_B(G_{\tau,\lambda})
\lesssim R^CM_{T,r}(f),
\end{equation*}
where $C_N>0$. We partition $\mathcal{Q}'=\sqcup_j \mathcal{Q}'_j$, where $\mathcal{Q}_j'=\{B\in \mathcal{Q}':
2^{j}\leq \mathcal{N}_{B}(G_{\tau,\lambda})<2^{j+1}\}$.
The number of nonempty classes $\mathcal{Q}'_j$ $\lesssim \log R$. By the pigeonhole principle,
there exists $j_0\in \mathbb{Z}$, setting $\mathcal{Q}_1=\mathcal{Q}_{j_0}'$ and $X_1=\cup_{B\in \mathcal{Q}_1}B$, such that
\begin{equation*}
\int_{\mathbb{R}^{m}\times X}|E_{m+n}F_\tau[f_{\mathcal W_0}](x,y,t)|^{p_0}\rmd x\rmd y\rmd t
\lesssim (\log R)^{C} \int_{\mathbb{R}^{m}\times X_1}|G_{\tau,\lambda}(x,y,t)|^{p_0}\rmd x\rmd y\rmd t.
\end{equation*}

With $\lambda$ fixed, for every dyadic number $\beta=2^j$, $j\geq0$, let
$\mathcal W_{\lambda,\beta}=\{\iota\in\mathcal W_0: \beta\leq\#\mathcal J_\lambda(\iota)<2\beta\}$.
Since $\Pi_\iota^*$ has time length $O(r)$ and each $I_j$ has length $r^{1-\varepsilon^2}$, we have
\begin{equation*}
1\leq\#\mathcal J_\lambda(\iota)
\lesssim 1+\frac{r}{r^{1-\varepsilon^2}}
\lesssim r^{\varepsilon^2}
\end{equation*}
whenever $\mathcal J_\lambda(\iota)\neq\varnothing$. Thus there are $O(\log R)$ nonempty choices of $\beta$.
For each such $\beta$, define
\begin{equation*}
G_{\tau,\lambda,\beta}(x,y,t)
=\sum_{\iota\in\mathcal W_{\lambda,\beta}}\mathcal P_\iota(x,y,t)\sum_{J\in\mathcal J_\lambda(\iota)} 1_J(y,t).
\end{equation*}
Clearly,
\begin{equation*}
G_{\tau,\lambda}
=\sum_\beta G_{\tau,\lambda,\beta}.
\end{equation*}

We repeat the preceding three selections. There exist $\beta_0\in 2^{\mathbb{Z}}$ and $\mathcal{Q}_2\subset \mathcal{Q}_1$,
denoting $X_2=\cup_{B\in \mathcal{Q}_2}B$, such that
$\#\mathcal{J}_{\lambda}(\iota')\sim \#\mathcal{J}_{\lambda}(\iota'')$ for all $\iota',\iota''\in \mathcal{W}_{\lambda,\beta_0}$,
$\mathcal{N}_{B'}(G_{\tau, \lambda, \beta_0})\sim \mathcal{N}_{B''}(G_{\tau, \lambda, \beta_0})$, where $B',B''\in \mathcal{Q}_2$,
$\#\mathcal{Q}\lesssim (\log R)^C \# \mathcal{Q}_2$ and
\begin{equation*}
\int_{\mathbb{R}^{m}\times X_1}|G_{\tau,\lambda}(x,y,t)|^{p_0}\rmd x\rmd y\rmd t
\lesssim (\log R)^{C}\int_{\mathbb{R}^{m}\times X_2}|G_{\tau,\lambda,\beta_0}(x,y,t)|^{p_0}\rmd x\rmd y\rmd t.
\end{equation*}

We now write $\mathcal W=\mathcal W_{\lambda,\beta_0}$, $\beta=\beta_0$ and $G_\tau=G_{\tau,\lambda,\beta_0}$.
Combining the preceding selections with \eqref{eq:amplitude-selection} and \eqref{eq:ball-selection}, we obtain
\begin{equation}\label{eq:packet-reduction}
\begin{aligned}
&\int_{\mathbb R^m\times B_r(y_0,a)}|E_{m+n}F_\tau[f](x,y,t)|^{p_0}\rmd x\rmd y\rmd t\\
&\qquad\lesssim (\log R)^C\int_{\mathbb R^m\times X_2}|G_\tau(x,y,t)|^{p_0}\rmd x\rmd y\rmd t+\operatorname{Err}_{p_0}.
\end{aligned}
\end{equation}

For every $\iota\in\mathcal W$, define
$\mathcal Q(\iota)=\{B\in\mathcal Q:B\cap J\neq\varnothing\text{ for some }J\in\mathcal J_\lambda(\iota)\}$.
Since every $B\in\mathcal Q$ lies in a unique $\mathbb R^n\times I_j$,
the sets of balls counted by distinct pieces $J\in\mathcal J_\lambda(\iota)$ are disjoint. Hence
\begin{equation}\label{eq:twoends-count}
\#\mathcal Q(\iota)
=\sum_{J\in\mathcal J_\lambda(\iota)}N_X(J)
\sim\lambda r^{1/2}\#\mathcal J_\lambda(\iota)
\sim\lambda\beta r^{1/2}.
\end{equation}
For any interval $I\subset\mathbb R$ of length at most $Cr^{1-\varepsilon^2}$, there are only $O(1)$ intervals $I_j$ meeting $I$.
By the definition of $\mathcal J_\lambda(\iota)$,
\begin{equation}\label{eq:twoends-local-count}
\#\{B\in\mathcal Q(\iota):t_B\in I\}
\leq\sum_{\substack{J_{\iota,j}\in\mathcal J_\lambda(\iota)\\I_j\cap I\neq\varnothing}}N_X(J_{\iota,j})
\lesssim\lambda r^{1/2}.
\end{equation}

Furthermore, the balls in $\mathcal Q(\iota)$ are pairwise disjoint and meet $\Pi_\iota^*$.
They are therefore contained in a fixed enlargement of a tube of length $O(r)$ and radius $O(r^{1/2+\varepsilon_0})$.
Comparing volumes gives
\begin{equation*}
\#\mathcal Q(\iota)
\lesssim\frac{r(r^{1/2+\varepsilon_0})^n}{r^{(n+1)/2}}
\lesssim r^{1/2+n\varepsilon_0}.
\end{equation*}
Since $\mathcal J_\lambda(\iota)$ is nonempty for every $\iota\in\mathcal W$, we also have $\#\mathcal Q(\iota)\geq1$.
Combining this with \eqref{eq:twoends-count}, we conclude that
\begin{equation}\label{eq:twoends-range}
\frac1{r^{1/2}}
\lesssim\lambda\beta\lesssim r^{n\varepsilon_0},
\qquad
1\leq\beta\lesssim r^{\varepsilon^2}.
\end{equation}

\medskip
\noindent\textbf{Step 3. Geometric estimate.}
\nopagebreak

Set
\begin{equation*}
\gamma_n=
\begin{cases}
\displaystyle\frac12,&n=1,\\[3pt]
\displaystyle\frac34,&n=2,\\[3pt]
\displaystyle\frac{n(2n+9)}{7n+36},&n\geq3.
\end{cases}
\end{equation*}
Recall from Lemma \ref{lem:packet-L2} that, for the selected collection $\mathcal W$,
\begin{equation*}
P_r=\sup_{z\in\mathbb R^n}
\#\left\{\iota=(k,\theta,v)\in\mathcal W:
|v+2a\xi_\theta-z|
\leq\frac{C_3T}{r^{1/2}}\right\}.
\end{equation*}

\begin{lemma}\label{lem:good-balls}
Suppose that $\beta\geq r^{\varepsilon^4}$. There exists a subfamily $\mathcal Q_3\subset\mathcal Q$,
writing $X_3=\bigcup_{B\in\mathcal Q_3}B$, such that
\begin{equation*}
|X\setminus X_3|
\leq\frac{|X|}{r^{2\varepsilon^2}},
\end{equation*}
and, for every $B\in\mathcal Q_3$,
\begin{equation*}
\#\{\iota\in\mathcal W:B\in\mathcal Q(\iota)\}\leq M,
\end{equation*}
where
\begin{equation}\label{eq:good-multiplicity}
M=C_{\varepsilon,n}r^{C_n\varepsilon^2}
P_r\frac{r^{n/2}}{(\lambda\beta r)^{\gamma_n}}.
\end{equation}
\end{lemma}

\begin{proof}
Consider the change of variables
\begin{equation*}
\Phi_T(y,t)
=\left(\frac{T(y-y_0)}{t},
\frac{T^2}{t}-\frac{T^2}{a}\right).
\end{equation*}

Since $|y-y_0|\lesssim r\leq T$ and $t\sim T$, direct differentiation shows that both $\Phi_T$ and
its inverse have uniformly bounded derivatives on the regions under consideration. Moreover,
$|\det D\Phi_T(y,t)|=\left(T/t\right)^{n+2}\sim1$.
For $\iota=(k,\theta,v)$, $\Phi_T$ transforms the axis $y=v-2(t-a)\xi_\theta$ into the line
\begin{equation*}
\ell_\iota
=\left\{\left(\frac Ta(v-y_0)+\frac{v-y_0+2a\xi_\theta}{T}s,\ s\right):s\in\mathbb R\right\},
\end{equation*}
where $s=T^2/t-T^2/a$. We retain the original labels even when different labels determine the same line.

Now, we construct the shadings associated with the lines $\ell_\iota$.
Fix sufficiently large constants $C_9\gg C_8\gg C_7$. For each $B\in\mathcal Q(\iota)$ with center $(y_B,t_B)$, set
\begin{equation*}
s_B=\frac{T^2}{t_B}-\frac{T^2}{a}
\end{equation*}
and define
\begin{equation*}
Y(\ell_\iota)
=\bigcup_{B\in\mathcal Q(\iota)}B^{n+1}\left(\ell_\iota(s_B),C_8r^{1/2+\varepsilon_0}\right).
\end{equation*}
Since $B\cap\Pi_\iota^*\neq\varnothing$ and the radius of $B$ is $r^{1/2}$, we have
\begin{equation*}
|y_B-v+2(t_B-a)\xi_\theta|
\lesssim r^{1/2+\varepsilon_0}.
\end{equation*}
On the other hand, direct computation gives
\begin{equation*}
|\Phi_T(y_B,t_B)-\ell_\iota(s_B)|
=\frac{T}{t_B}|y_B-v+2(t_B-a)\xi_\theta|
\lesssim r^{1/2+\varepsilon_0}.
\end{equation*}
Hence, taking $C_8$ sufficiently large,
\begin{equation*}
\Phi_T(B)
\subset B^{n+1}\left(\ell_\iota(s_B),C_8r^{1/2+\varepsilon_0}\right)
\subset B^{n+1}\left(\Phi_T(y_B,t_B),Cr^{1/2+\varepsilon_0}\right).
\end{equation*}

We normalize the lines and shadings by $\widetilde\ell_\iota={\ell_\iota}/{C_9r}$, $Y(\widetilde\ell_\iota)={Y(\ell_\iota)}/{C_9r}$,
and define $\delta={C_8}/(C_9r^{1/2-\varepsilon_0})$.
Choosing $C_9$ sufficiently large, all the normalized shadings lie in $B^{n+1}(0,1)$ and consist of radius-$\delta$ balls centered on their respective lines.
Clearly,
\begin{equation*}
|N_\delta(\widetilde\ell_\iota)|\sim\delta^n.
\end{equation*}

We now verify that the normalized shadings are $\Lambda$-dense.

The balls in $\mathcal Q$ are pairwise disjoint, and the Jacobian of $\Phi_T$ is comparable to one. Thus, by \eqref{eq:twoends-count},
\begin{equation}\label{eq:shading-lower}
|Y(\widetilde\ell_\iota)|
\gtrsim \frac{|\cup_{B\in \mathcal{Q}(\iota)}\Phi_T(B)|}{r^{n+1}}
\sim\frac{\#\mathcal Q(\iota)\,r^{(n+1)/2}}{r^{n+1}}
\gtrsim\frac{\lambda\beta}{r^{n/2}},
\end{equation}
On the other hand,
\begin{equation}\label{eq:shading-upper}
\left|\bigcup_{\iota\in\mathcal W}Y(\widetilde\ell_\iota)\right|
\lesssim\#\mathcal Q\,\delta^{n+1}.
\end{equation}
Consequently, by \eqref{eq:shading-lower},
\begin{equation*}
\frac{|Y(\widetilde\ell_\iota)|}{|N_\delta(\widetilde\ell_\iota)|}
\gtrsim \frac{\lambda\beta}{r^{n/2}\delta^n}
\gtrsim\frac{\lambda\beta}{r^{n\varepsilon_0}}.
\end{equation*}
Taking
\begin{equation*}
\Lambda=\frac{c\lambda\beta}{r^{n\varepsilon_0}},
\end{equation*}
with $c>0$ sufficiently small, the shadings are $\Lambda$-dense. By \eqref{eq:twoends-range}, we have $0<\Lambda\leq1$.

We next verify the two-ends condition. Set $\varepsilon_1=4\varepsilon^2$ and $\varepsilon_2=\varepsilon^4$.
Let $J\subset N_\delta(\widetilde\ell_\iota)$ be a tube of length $\delta^{\varepsilon_1}$ and radius $\delta$.
If a ball forming $Y(\widetilde\ell_\iota)$ intersects $J$, its center has last coordinate in an interval of length $O(\delta^{\varepsilon_1}+\delta)$.
Indeed, the last coordinate of the center corresponding to $B$ is $s_B/(C_9r)$. Thus, for any two such balls $B,B'$, we have
\begin{equation*}
|t_B-t_{B'}|
\sim |s_B-s_{B'}|
\lesssim r(\delta^{\varepsilon_1}+\delta),
\end{equation*}
where we used $t_B,t_{B'}\sim T$.
The corresponding times $t_B$ therefore belong to an interval of length
\begin{equation*}
Cr(\delta^{\varepsilon_1}+\delta)
\lesssim r^{1-2\varepsilon^2+4\varepsilon^2\varepsilon_0}+r^{1/2+\varepsilon_0}
\lesssim r^{1-\varepsilon^2}.
\end{equation*}
By \eqref{eq:twoends-local-count}, at most $C\lambda r^{1/2}$ balls from $\mathcal Q(\iota)$ have centers in this interval. Hence
\begin{equation*}
\frac{|Y(\widetilde\ell_\iota)\cap J|}{|Y(\widetilde\ell_\iota)|}
\lesssim\frac{\lambda r^{1/2}\delta^{n+1}}{\lambda\beta/r^{n/2}}
\lesssim\frac{r^{(n+1)\varepsilon_0}}{\beta}
\lesssim r^{(n+1)\varepsilon_0-\varepsilon^4}
\lesssim\delta^{\varepsilon_2},
\end{equation*}
provided $\varepsilon_0$ is sufficiently small in terms of $\varepsilon$.
Thus all the normalized shadings satisfy the $(\varepsilon_1,\varepsilon_2)$-two-ends condition.

We select a maximal $C\delta$-separated family of directions among the lines $\widetilde\ell_\iota$,
and denote the corresponding lines by $\mathcal L_0$. Recall the slope of $\widetilde\ell_\iota$ is $(v-y_0+2a\xi_\theta)/{T}$.
By maximality, for every $\iota\in\mathcal W$, there exists a selected label $\iota_0=(k_0,\theta_0,v_0)$ such that
\begin{equation*}
|v+2a\xi_\theta-v_0-2a\xi_{\theta_0}|
\lesssim T\delta
\lesssim T r^{\varepsilon_0-1/2}.
\end{equation*}
Therefore, for each fixed selected label $\iota_0$. all labels $\iota$ whose directions differ from that of $\iota_0$ by at most $C\delta$
have their centers lying in a ball of radius $O(Tr^{\varepsilon_0-1/2})$. This ball can be covered by $O(r^{n\varepsilon_0})$ balls of radius
$C_3 T/r^{1/2}$. By the definition of $P_r$, there are at most $O(r^{1+\varepsilon_0}P_r)$ such labels, that is
\begin{equation}\label{eq:direction-count}
\#\mathcal W
\lesssim r^{n\varepsilon_0}P_r\#\mathcal L_0.
\end{equation}

We apply the geometric estimates in Section \ref{sec:preliminaries} to $\mathcal L_0$.
The family $\mathcal L_0$ is $1$-parallel by construction.
For $n=1,2$, Propositions \ref{prop:twoends-plane} and \ref{prop:twoends-three-d}, respectively, together with the $\Lambda$-density, give, for every $\eta>0$,
\begin{equation*}
\left|\bigcup_{\iota\in\mathcal W}Y(\widetilde\ell_\iota)\right|
\geq \left|\bigcup_{\widetilde\ell \in \mathcal{L}_0}Y(\widetilde\ell)\right|
\gtrsim
\Lambda\delta^n\#\mathcal L_0
\begin{cases}
\displaystyle
\delta^{\eta+\varepsilon_1/2}\Lambda^{1/2},
&n=1,\\[4pt]
\displaystyle
\delta^{\eta+3\varepsilon_1/4+1/2}\Lambda^{3/4},
&n=2.
\end{cases}
\end{equation*}

For $n\geq3$, Proposition \ref{prop:twoends-general} gives
\begin{align*}
\left|\bigcup_{\iota\in\mathcal W}
Y(\widetilde\ell_\iota)\right|
&\gtrsim\delta^{\eta+C_n\varepsilon_1}\Lambda\delta^{n/2}(\delta^n\#\mathcal L_0)^{1/2}\\
&\gtrsim\delta^{\eta+C_n\varepsilon_1+n/2}\Lambda\delta^n\#\mathcal L_0,
\end{align*}
where we used $\delta^n\#\mathcal L_0\lesssim1$.
If $\Lambda\geq\delta^{1/4}$, Proposition \ref{prop:twoends-dense} also gives
\begin{align*}
\left|\bigcup_{\iota\in\mathcal W}Y(\widetilde\ell_\iota)\right|
&\gtrsim\delta^{\eta+C_n\varepsilon_1+3n/7}\Lambda^{(2n+12)/7}\left(\sum_{\ell\in\mathcal L_0}|Y(\ell)|\right)^{4/7}\\
&\gtrsim\delta^{\eta+C_n\varepsilon_1+3n/7}\Lambda^{(2n+9)/7}(\Lambda\delta^n\#\mathcal L_0).
\end{align*}
Taking the geometric mean of the last two estimates with respective weights
\begin{equation*}
\frac{36}{7n+36},
\qquad
\frac{7n}{7n+36},
\end{equation*}
we obtain
\begin{equation*}
\Lambda\delta^n\#\mathcal L_0
\lesssim\frac{\left|\bigcup_{\iota\in\mathcal W}Y(\widetilde\ell_\iota)\right|}
{\delta^{\eta+C_n\varepsilon_1+3n(n+6)/(7n+36)}\Lambda^{\gamma_n}}.
\end{equation*}
For $\Lambda<\delta^{1/4}$, the estimate follows from Proposition \ref{prop:twoends-general} alone, since
\begin{equation*}
\frac{3n(n+6)}{7n+36}
+\frac{\gamma_n}{4}-\frac n2
=\frac{9n}{4(7n+36)}>0.
\end{equation*}
Combining the three dimensional ranges, we have
\begin{equation}\label{eq:twoends-geometry}
\Lambda\delta^n\#\mathcal L_0
\lesssim\frac{\left|\bigcup_{\iota\in\mathcal W}Y(\widetilde\ell_\iota)\right|}{\delta^{\eta+C_n\varepsilon_1}}
\begin{cases}
\displaystyle\frac1{\Lambda^{1/2}},
&n=1,\\[6pt]
\displaystyle\frac1{\delta^{1/2}\Lambda^{3/4}},
&n=2,\\[6pt]
\displaystyle
\frac1{\delta^{3n(n+6)/(7n+36)}
\Lambda^{\gamma_n}},
&n\geq3.
\end{cases}
\end{equation}

For each $B\in\mathcal Q$, write
$\mathcal W_B=\{\iota\in\mathcal W:B\in\mathcal Q(\iota)\}$.
By \eqref{eq:twoends-count} and \eqref{eq:direction-count},
\begin{equation*}
\sum_{B\in\mathcal Q}\#\mathcal W_B
=\sum_{\iota\in\mathcal W}\#\mathcal Q(\iota)
\lesssim\lambda\beta r^{1/2}\#\mathcal W
\lesssim\lambda\beta r^{1/2+n\varepsilon_0}P_r\#\mathcal L_0.
\end{equation*}
Since
\begin{equation*}
\frac{\lambda\beta r^{1/2}}{\Lambda\delta^n}
\sim r^{(n+1)/2},
\end{equation*}
using \eqref{eq:twoends-geometry} and \eqref{eq:shading-upper}, we obtain
\begin{equation*}
\sum_{B\in\mathcal Q}\#\mathcal W_B
\lesssim P_r r^{(n+1)/2+n\varepsilon_0}(\Lambda\delta^n\#\mathcal L_0)
\lesssim P_r\#\mathcal Q\frac{r^{(2n+1)\varepsilon_0}}{\delta^{\eta+C_n\varepsilon_1}}
\begin{cases}
\displaystyle\frac1{\Lambda^{1/2}},
&n=1,\\[6pt]
\displaystyle\frac1{\delta^{1/2}\Lambda^{3/4}},
&n=2,\\[6pt]
\displaystyle
\frac1{\delta^{3n(n+6)/(7n+36)}
\Lambda^{\gamma_n}},
&n\geq3.
\end{cases}
\end{equation*}
Taking $\eta=\varepsilon^2/10$, recalling
$\delta\sim 1/(r^{1/2-\varepsilon_0})$,
$\Lambda\sim (\lambda\beta)/r^{n\varepsilon_0}$,
$\varepsilon_1=4\varepsilon^2$,
and choosing $\varepsilon_0$ sufficiently small in terms of $\varepsilon$, we obtain
\begin{equation*}
\sum_{B\in\mathcal Q}\#\mathcal W_B
\lesssim P_r\#\mathcal Q\,r^{c_n\varepsilon^2}\frac{r^{n/2}}{(\lambda\beta r)^{\gamma_n}}.
\end{equation*}
Choose $C_n>c_n+2$ in \eqref{eq:good-multiplicity}, increasing $C_{\varepsilon,n}$ if necessary. Then
\begin{equation*}
\sum_{B\in\mathcal Q}\#\mathcal W_B
\leq\frac{M\#\mathcal Q}{r^{2\varepsilon^2}}.
\end{equation*}
Set
\begin{equation*}
\mathcal Q_3
=\{B\in\mathcal Q:\#\mathcal W_B\leq M\},
\qquad
X_3=\bigcup_{B\in\mathcal Q_3}B.
\end{equation*}
It follows that
\begin{equation*}
\#(\mathcal Q\setminus\mathcal Q_3)
\leq\frac1M
\sum_{B\in\mathcal Q}\#\mathcal W_B
\leq\frac{\#\mathcal Q}{r^{2\varepsilon^2}}.
\end{equation*}
Since all balls in $\mathcal Q$ are pairwise disjoint and have the same radius,
\begin{equation*}
|X\setminus X_3|
\leq\frac{|X|}{r^{2\varepsilon^2}}.
\end{equation*}
This proves the lemma.
\end{proof}

\medskip
\noindent\textbf{Step 4. The two-ends case.}
\nopagebreak

We first establish an $L^2$ estimate. For each $B\in\mathcal Q$, recall $\mathcal W_B=\{\iota\in\mathcal W:B\in\mathcal Q(\iota)\}$.
Since $B$ lies in a unique $\mathbb{R}^{n}\times I_j$, we can rewrite $G_\tau=\sum_{\iota\in \mathcal{W}_B}\mathcal{P}_\iota 1_{\Pi_\iota^{\ast}}$. Thus
\begin{equation*}
G_\tau-E_{m+n}F_\tau[f_{\mathcal W_B}]
=-\sum_{\iota\in\mathcal W_B}\mathcal P_\iota1_{(\Pi_\iota^*)^c}
\qquad\text{on }\mathbb R^m\times B.
\end{equation*}
By \eqref{eq:Ftau-decay}, \eqref{eq:packet-decay} and $\#\mathcal{W}\lesssim R^{m/2+n}$, for every $N_1,N_2>0$, we have
\begin{equation*}
\left|\sum_{\iota\in \mathcal{W}}\mathcal{P}_\iota(x,y,t) 1_{(\Pi_\iota^*)^c}(y,t)\right|
\lesssim \frac{R^{m/2+n}T^{n/2}M_{T,r}(f)}{r^{N_1\varepsilon_0}(1+\tau|x|)^{N_2}}
\lesssim \frac{R^{m/2+3n/2}M_{T,r}(f)}{r^{N_1\varepsilon_0}(1+\tau|x|)^{N_2}}.
\end{equation*}
Since $r>R^{\varsigma}$, we can choose $N_1$ sufficiently large such that $R^{m/2+3n/2}/r^{N_1\varepsilon_0}\lesssim 1/R^{N+n+2}$.
Hence, for every union $Y$ of balls from $\mathcal{Q}$, by \eqref{eq:M-L2}, we obtain
\begin{align*}
\int_{\mathbb{R}^{m}\times Y} \left|\sum_{\iota\in \mathcal{W}}\mathcal{P}_\iota(x,y,t) 1_{(\Pi_\iota^*)^c}(y,t)\right|^{2} \rmd x\rmd y \rmd t
&\lesssim \frac{M_{T,r}(f)^{2}}{R^{2(N+n+2)}}|Y|\int_{\mathbb{R}^{m}}\frac{\rmd x}{(1+\tau|x|)^{2N_2}}\\
&\lesssim \frac{1}{R^{2(N+n+2)-n-1}}\frac{M_{T,r}(f)^{2}}{\tau^{m}}\\
&\lesssim \frac{1}{R^{N}}\|F_\tau[f]\|_{L^2}^{2},
\end{align*}
which implies that those terms satisfy \eqref{eq:negligible-remainder}.
By Plancherel's identity, \eqref{eq:packet-subsum-L2} and \eqref{eq:twoends-count}, we have
\begin{equation}\label{eq:twoends-L2}
\begin{aligned}
\int_{\mathbb R^m\times Y}|G_\tau(x,y,t)|^2\rmd x\rmd y\rmd t
&\lesssim r^{1/2}\sum_{B\subset Y}\|F_\tau[f_{\mathcal W_B}]\|_{L^2}^2+\operatorname{Err}_2\\
&\lesssim r^{1/2}\sup_{\iota\in\mathcal W}\#\mathcal Q(\iota)\|F_\tau[f]\|_{L^2}^2+\operatorname{Err}_2\\
&\lesssim\lambda\beta r\|F_\tau[f]\|_{L^2}^2+\operatorname{Err}_2.
\end{aligned}
\end{equation}

To apply refined decoupling where the geometric multiplicity bound holds, and to handle the remaining case by induction,
we localize the selected wave packets to smaller balls. Set
$r'={r^{1-\varepsilon^2}}/{100}$.
Choose a maximal $r'/2$-separated subset of the centers of $\mathcal Q$, and let $B_i$ be the balls of radius $r'$ centered at these points.
Clearly, $\{B_i\}$ is the bounded overlap cover of $\mathcal{Q}$ and each lies in a unique $\mathbb{R}^{n}\times I_j$.
By the maximality and $r^{1/2}\ll r'$, for any $B\in \mathcal{Q}$, there exists at least one ball $B_i$ such that $B\subset B_i$.

Let $\mathcal{W}_i=\{\iota\in \mathcal{W}: J\cap B_i\neq \varnothing \text{for some }J\in \mathcal{J}_\lambda(\iota)\}$.
Since each $J$ has diameter $O(r^{1-\varepsilon^2})$, it meets at most $O(1)$ balls $B_i$.
Recall $\#\mathcal{J}_{\lambda}(\iota)\lesssim \beta$, so
\begin{equation*}
\sup_{\iota\in\mathcal W}\#\{i:\iota\in\mathcal W_i\}
\lesssim\beta.
\end{equation*}
Consequently,
\begin{equation*}
\sum_i\sum_{\iota\in\mathcal W_i}\|f_\iota^{(r,a)}\|_{L^2}^2
\lesssim\beta\sum_{\iota\in\mathcal W}\|f_\iota^{(r,a)}\|_{L^2}^2.
\end{equation*}
By \eqref{eq:packet-subsum-L2} and \eqref{eq:packet-average},
\begin{equation}\label{eq:cover-L2}
\sum_i\|F_\tau[f_{\mathcal W_i}]\|_{L^2}^2
\lesssim\beta\|F_\tau[f]\|_{L^2}^2,
\end{equation}
\begin{equation}\label{eq:cover-average}
M_{T,r'}(f_{\mathcal W_i})
\lesssim M_{T,r}(f).
\end{equation}
As $B_i$ lies in a unique interval $I_j$, the definition of $\mathcal W_i$ gives
\begin{equation}\label{eq:cover-localization}
G_\tau-E_{m+n}F_\tau[f_{\mathcal W_i}]
=-\sum_{\iota\in\mathcal W_i}\mathcal P_\iota 1_{(\Pi_\iota^*)^c}
\qquad\text{on }\mathbb R^m\times B_i.
\end{equation}
Similarly to \eqref{eq:twoends-L2}, the last sum satisfies \eqref{eq:negligible-remainder}.

Now suppose that $\beta\geq r^{\varepsilon^4}$. Since $r>R^{\varsigma}$, select $r$ suffientially large,
Lemma \ref{lem:good-balls} and the selections in Step 2 give
\begin{equation*}
\frac{\#(\mathcal Q_2\setminus\mathcal Q_3)}{\#\mathcal Q_2}
\lesssim\frac{(\log R)^C}{r^{2\varepsilon^2}}
\lesssim\frac{(\log r)^C}{r^{2\varepsilon^2}}
\leq \frac 12,
\end{equation*}
Since the local norms $\mathcal N_B(G_\tau)$ are comparable on $\mathcal Q_2$, it follows that
\begin{equation}\label{eq:good-ball-reduction}
\int_{\mathbb R^m\times X_2}|G_\tau(x,y,t)|^{p_0}\rmd x \rmd y \rmd t
\lesssim\int_{\mathbb R^m\times(X_2\cap X_3)}|G_\tau(x,y,t)|^{p_0}\rmd x \rmd y \rmd t.
\end{equation}

Assign each $B\in\mathcal Q_2\cap\mathcal Q_3$ to one $B_i$ containing it, and let $Z_i$ be the union of the assigned balls.
Then $Z_i\subset B_i$, the sets $Z_i$ are pairwise disjoint, and
$X_2\cap X_3=\cup_i Z_i$.
For every $B\subset Z_i$, if $\iota\in \mathcal{W}_i$ and $\Pi_\iota\cap C_5B\neq \varnothing$, it implies $B\in \mathcal{Q}(\iota)$.
Since $B\in X_3$, Lemma \ref{lem:good-balls} yields that
\begin{equation*}
\#\{\iota\in\mathcal W_i:\Pi_\iota\cap C_5B\neq\varnothing\}
\leq \#\{\iota\in\mathcal W:B\in\mathcal Q(\iota)\}\\
\leq M.
\end{equation*}
By \eqref{eq:cover-localization}, Theorem \ref{thm:refined-decoupling} and \eqref{eq:packet-L2},
\begin{equation}\label{eq:twoends-high1}
\begin{aligned}
&\int_{\mathbb R^m\times(X_2\cap X_3)}
|G_\tau(x,y,t)|^{p_\ast}\rmd x \rmd y \rmd t\\
&\quad\lesssim
\sum_i\int_{\mathbb R^m\times Z_i}
|E_{m+n}F_\tau[f_{\mathcal W_i}](x,y,t)|^{p_\ast}\rmd x \rmd y \rmd t+\operatorname{Err}_{p_\ast}\\
&\quad\lesssim
r^{1+\varepsilon^2}
\left(\frac{L}{r^{1/2}}\right)^{2m/(m+n)}
(\bar\beta^2M)^{2/(m+n)}
\frac{\|\rho\|_{L^2}^2}{\tau^m}
\sum_i\sum_{\iota\in\mathcal W_i}
\|f_\iota^{(r,a)}\|_{L^2}^2
+\operatorname{Err}_{p_\ast}\\
&\quad\lesssim
r^{1+\varepsilon^2}\beta
\left(\frac{L}{r^{1/2}}\right)^{2m/(m+n)}
(\bar\beta^2M)^{2/(m+n)}
\|F_\tau[f]\|_{L^2}^2
+\operatorname{Err}_{p_\ast}.
\end{aligned}
\end{equation}
On the other hand, \eqref{eq:directional-bound} and \eqref{eq:good-multiplicity} give
\begin{equation*}
\bar\beta^2M
\lesssim r^{C_n\varepsilon^2}\bar\beta^2P_r\frac{r^{n/2}}{(\lambda\beta r)^{\gamma_n}}
\lesssim r^{C_n\varepsilon^2}\left(\frac{T}{r^{1/2}}\right)^n\frac{M_{T,r}(f)^2}{(\lambda\beta r)^{\gamma_n}}.
\end{equation*}
Using $\beta\lesssim r^{\varepsilon^2}$, \eqref{eq:twoends-high1} becomes
\begin{equation}\label{eq:twoends-high2}
\begin{aligned}
&\int_{\mathbb R^m\times(X_2\cap X_3)}
|G_\tau(x,y,t)|^{p_\ast}\rmd x \rmd y \rmd t\\
&\quad\lesssim
r^{C_{m,n}\varepsilon^2}
(L^mT^n)^{2/(m+n)}
\frac{\|F_\tau[f]\|_{L^2}^2
M_{T,r}(f)^{4/(m+n)}}
{(\lambda\beta r)^{2\gamma_n/(m+n)}}
+\operatorname{Err}_{p_\ast}.
\end{aligned}
\end{equation}

Finally, set
\begin{equation*}
\vartheta
=\frac{p_0-2}{p_\ast-2}
=\frac{m+n}{m+n+2\gamma_n}.
\end{equation*}
By H\"older's inequality, \eqref{eq:twoends-high2} and \eqref{eq:twoends-L2},
\begin{equation}\label{eq:twoends-interpolation}
\begin{aligned}
&\int_{\mathbb R^m\times(X_2\cap X_3)}|G_\tau(x,y,t)|^{p_0}\rmd x \rmd y \rmd t\\
&\quad\leq\left(\int_{\mathbb R^m\times(X_2\cap X_3)}|G_\tau(x,y,t)|^{p_\ast}\rmd x \rmd y \rmd t\right)^\vartheta
\left(\int_{\mathbb R^m\times(X_2\cap X_3)}|G_\tau(x,y,t)|^2\rmd x \rmd y \rmd t\right)^{1-\vartheta}\\
&\quad\lesssim r^{C_{m,n}\varepsilon^2}(L^mT^n)^{p_0/2-1}\|F_\tau[f]\|_{L^2}^2M_{T,r}(f)^{p_0-2}
+\operatorname{Err}_{p_0}.
\end{aligned}
\end{equation}
Combining \eqref{eq:twoends-interpolation} with \eqref{eq:good-ball-reduction} gives
\begin{equation}\label{eq:twoends-estimate}
\int_{\mathbb R^m\times X_2}|G_\tau(x,y,t)|^{p_0}\rmd x \rmd y \rmd t
\lesssim r^{C_{m,n}\varepsilon^2}(L^mT^n)^{p_0/2-1}\|F_\tau[f]\|_{L^2}^2M_{T,r}(f)^{p_0-2}
+\operatorname{Err}_{p_0}.
\end{equation}

\medskip
\noindent\textbf{Step 5. Putting the estimates together.}
\nopagebreak

We complete the proof by induction on $r$. Fix a sufficiently large constant $A>1$,
such that $A>C_{m,n}$ in \eqref{eq:twoends-estimate}. We choose the parameters $\varepsilon,\varsigma$ at the beginning
of the proof sufficiently small so that
\begin{equation*}
\varsigma+A\varepsilon^2<\zeta.
\end{equation*}
We prove the stronger estimate
\begin{equation}\label{eq:scale-induction}
\begin{aligned}
&\int_{\mathbb R^m\times B_r(y_0,a)}
|E_{m+n}F_\tau[f]|^{p_0}\rmd x\rmd y\rmd t\\
&\qquad\lesssim R^\varsigma r^{A\varepsilon^2}(L^mT^n)^{p_0/2-1}\|F_\tau[f]\|_{L^2}^2M_{T,r}(f)^{p_0-2},
\end{aligned}
\end{equation}

Choose a sufficiently large fixed constant $r_0$, depending on $\varepsilon,\varsigma$, as required in Step 4.
For $r\leq\max\{R^\varsigma,r_0\}$, \eqref{eq:terminal} gives
\begin{equation*}
\begin{aligned}
&\int_{\mathbb R^m\times B_r(y_0,a)}|E_{m+n}F_\tau[f](x,y,t)|^{p_0}\rmd x\rmd y\rmd t\\
&\quad\lesssim r(L^mT^n)^{p_0/2-1}\|F_\tau[f]\|_{L^2}^2M_{T,r}(f)^{p_0-2}\\
&\quad\lesssim r_0R^\varsigma r^{A\varepsilon^2}(L^mT^n)^{p_0/2-1}\|F_\tau[f]\|_{L^2}^2M_{T,r}(f)^{p_0-2}.
\end{aligned}
\end{equation*}
Thus \eqref{eq:scale-induction} holds.

Suppose that $r>\max\{R^\varsigma,r_0\}$ and that \eqref{eq:scale-induction} holds at every smaller radius. By \eqref{eq:packet-reduction},
\begin{equation*}
\begin{aligned}
&\int_{\mathbb R^m\times B_r(y_0,a)}|E_{m+n}F_\tau[f](x,y,t)|^{p_0}\rmd x\rmd y\rmd t\\
&\qquad\lesssim(\log R)^C\int_{\mathbb R^m\times X_2}|G_\tau(x,y,t)|^{p_0}\rmd x\rmd y\rmd t
+\operatorname{Err}_{p_0}.
\end{aligned}
\end{equation*}

If $\beta\geq r^{\varepsilon^4}$, by \eqref{eq:twoends-estimate},
\begin{equation*}
\begin{aligned}
&\int_{\mathbb R^m\times B_r(y_0,a)}|E_{m+n}F_\tau[f](x,y,t)|^{p_0}\rmd x\rmd y\rmd t\\
&\quad\lesssim (\log R)^C r^{C_{m,n}\varepsilon^2}(L^mT^n)^{p_0/2-1}\|F_\tau[f]\|_{L^2}^2M_{T,r}(f)^{p_0-2}
+\operatorname{Err}_{p_0}.
\end{aligned}
\end{equation*}
Since $r>R^\varsigma$ and $A>C_{m,n}$, we have
\begin{equation*}
\frac{(\log R)^C}{r^{(A-C_{m,n})\varepsilon^2}}
\lesssim \frac{(\log r)^C}{r^{(A-C_{m,n})\varepsilon^2}}
\lesssim 1.
\end{equation*}
Hence \eqref{eq:scale-induction} follows in this case.

Now suppose that $\beta<r^{\varepsilon^4}$. The covering $\{B_i\}$ constructed in Step 4 contains $X_2$.
By \eqref{eq:cover-localization}, the induction hypothesis at radius $r'=r^{1-\varepsilon^2}/100$, and \eqref{eq:cover-L2},
\eqref{eq:cover-average}, we obtain
\begin{equation*}
\begin{aligned}
&\int_{\mathbb R^m\times B_r(y_0,a)}|E_{m+n}F_\tau[f](x,y,t)|^{p_0}\rmd x\rmd y\rmd t\\
&\quad\lesssim (\log R)^C\sum_i\int_{\mathbb R^m\times B_i}|E_{m+n}F_\tau[f_{\mathcal W_i}](x,y,t)|^{p_0}\rmd x\rmd y\rmd t
+\operatorname{Err}_{p_0}\\
&\quad\lesssim (\log R)^CR^\varsigma(r')^{A\varepsilon^2}(L^mT^n)^{p_0/2-1}
\sum_i\|F_\tau[f_{\mathcal W_i}]\|_{L^2}^2M_{T,r'}(f_{\mathcal W_i})^{p_0-2}
+\operatorname{Err}_{p_0}\\
&\quad\lesssim (\log R)^CR^\varsigma(r')^{A\varepsilon^2}\beta(L^mT^n)^{p_0/2-1}
\|F_\tau[f]\|_{L^2}^2M_{T,r}(f)^{p_0-2}
+\operatorname{Err}_{p_0}.
\end{aligned}
\end{equation*}
By the definition of $r'$ and $\beta<r^{\varepsilon^4}$,
\begin{equation*}
\beta(r')^{A\varepsilon^2}\leq r^{\varepsilon^4}\left(\frac{r^{1-\varepsilon^2}}{100}\right)^{A\varepsilon^2}
=\frac{r^{A\varepsilon^2}}{100^{A\varepsilon^2}r^{(A-1)\varepsilon^4}}.
\end{equation*}
Since $A>1$ and $r>R^\varsigma$,
\begin{equation*}
\frac{(\log R)^C}{r^{(A-1)\varepsilon^4}}
\lesssim \frac{(\log r)^C}{r^{(A-1)\varepsilon^4}}.
\end{equation*}
Taking $r_0$ sufficiently large, the last expression is as small as required. Therefore,
\begin{equation*}
\begin{aligned}
&\int_{\mathbb R^m\times B_r(y_0,a)}|E_{m+n}F_\tau[f](x,y,t)|^{p_0}\rmd x\rmd y\rmd t\\
&\quad\lesssim R^\varsigma r^{A\varepsilon^2}
(L^mT^n)^{p_0/2-1}\|F_\tau[f]\|_{L^2}^2M_{T,r}(f)^{p_0-2}
+\operatorname{Err}_{p_0}.
\end{aligned}
\end{equation*}
By \eqref{eq:negligible-remainder}, this concludes the induction.

Finally, since $r\leq T\leq R$ and $\varsigma+A\varepsilon^2<\zeta$, we have
\begin{equation*}
R^\varsigma r^{A\varepsilon^2}
\leq R^{\varsigma+A\varepsilon^2}
\leq R^\zeta.
\end{equation*}
This proves \eqref{eq:local-estimate}.
\end{proof}

\bibliographystyle{abbrv}
\bibliography{references}

\end{document}